\renewcommand{\fnum@algorithm}{\fname@algorithm}
\numberwithin{equation}{section}
\newtheorem{Remark}{Remark}[section]
\newtheorem{Theorem}{Theorem}[section]
\newtheorem{Lemma}{Lemma}[section]
\newtheorem{Proposition}{Proposition}[section]
\newtheorem{Corollary}{Corollary}[section]
\newtheorem{Assumption}{Assumption}[section]
\newcommand{\be}{\begin{equation}}
	\newcommand{\ee}{\end{equation}}
\newcommand{\bee}{\begin{equation*}}
	\newcommand{\eee}{\end{equation*}}
\newcommand{\bi}{\begin{itemize}}
	\newcommand{\ei}{\end{itemize}}
\DeclareMathOperator{\tr}{tr}
\DeclareMathOperator*{\argmax}{arg\,max}
\def \pit {\tilde{\pi}}
\def \E{\mathbb{E}}
\def \N{\mathbb{N}}
\def \P{\mathbb{P}}
\def \R{\mathbb{R}}
\def \Lc{{\mathcal L}}
\def \Cc{{\mathcal C}}
\def \Ac{{\mathcal A}}
\def \Pc{{\mathcal P}}
\def \Fc{{\mathcal F}}
\def \fz{\mathfrak{z}}
\def \Hc{\mathcal{H}}
\def \Leb{\operatorname{{{Leb}}}}
\def \poly{\mathfrak{p}}
\title{Convergence of Policy Iteration for Entropy-Regularized Stochastic Control Problems}
\author{Yu-Jui Huang\thanks{
		Department of Applied Mathematics, University of Colorado Boulder,  email: \texttt{yujui.huang@colorado.edu}. Partially supported by National Science Foundation (DMS-2109002).
	}
	\and 
	Zhenhua Wang\thanks{Department of Mathematics, Iowa State University, email: \texttt{zhenhuaw@iastate.edu}. 
	}
	\and 
	Zhou Zhou\thanks{School of Mathematics and Statistics, University of Sydney, email:
		\texttt{zhou.zhou@sydney.edu.au}.}
}
\begin{document}
	
	\maketitle
	
	\begin{abstract}
	For a general entropy-regularized stochastic control problem on an infinite horizon, we prove that a policy iteration algorithm (PIA) converges to an optimal relaxed control. Contrary to the standard stochastic control literature, classical H\"{o}lder estimates of value functions do not ensure the convergence of the PIA, due to the added entropy-regularizing term. To circumvent this, we carry out a delicate estimation by moving back and forth between appropriate H\"{o}lder and Sobolev spaces. This requires new Sobolev estimates designed specifically for the purpose of policy iteration and a nontrivial technique to contain the entropy growth. Ultimately, we obtain a uniform H\"older bound for the sequence of value functions generated by the PIA, thereby achieving the desired convergence result. Characterization of the optimal value function as the unique solution to an exploratory Hamilton--Jacobi--Bellman equation comes as a by-product. The PIA is numerically implemented in an example of optimal consumption. 
	\end{abstract}

\textbf{MSC (2010):} 
%60G40,  	%Stopping times; optimal stopping problems; gambling theory
%60J45,  	%Probabilistic potential theory
93E20,  % Optimal stochastic control
60H10, %Stochastic ordinary differential equations
94A17 %Measures of information, entropy
%91G80.
%91A13, % Games with infinitely many players
%49K21. % Optimality conditions:	Problems involving relations other than differential equations
%60J05,  %	Discrete-time Markov processes on general state spaces
\smallskip

\textbf{Keywords:} entropy regularization, policy iteration algorithm, exploratory Hamilton-Jacobi-Bellman equations, reinforcement learning.

\section{Introduction}\label{sec:intro}
Policy improvement refers to updating a current policy (i.e., control strategy) to a new one that improves the performance. In practice, one may repeat this procedure indefinitely (which forms a {\it policy iteration algorithm} (PIA)), with the hope that the recursive improvement will eventually lead to an optimal policy. %PIA has been widely studies in MDP setting, which plays a central role in Machine learning. 

For standard stochastic control problems in a diffusion model, 
the PIA generally lives up to one's hope. By classical estimates of partial differential equations (PDEs), Puterman \cite{Puterman81} shows that the optimal value is recovered under the PIA, for a specific control problem with a compact domain of time and space. Jacka and Mijatovi\'{c} \cite{jacka2017policy} put forth a long list of general assumptions under which the PIA converges to an optimal control, and a few examples that fulfill the assumptions are provided. Relying on backward stochastic differential equations, Kerimkulov et al.\ \cite{kerimkulov2020exponential} establish the convergence of the PIA to an 
optimal control with an appropriate  convergence rate, under the condition that only the drift coefficient of the state process is controlled.  
%{\color{blue}\sout{A modified PIA is proposed in Kerimkulov et al.\ \cite{kerimkulov2021modified} to deal with the case where the diffusion coefficient of the state process is also controlled, but this PIA is only known to converge to a locally optimal control.}} 
Overall, these developments share one underlying idea: for the desired convergence of the PIA, one needs a uniform bound for 
the value functions generated by the PIA as well as their derivatives---that is, a uniform bound in a H\"{o}lder space is in demand. Finding such a bound is made drastically more complicated by {\it entropy regularization}. 

Entropy regularization changes a standard stochastic control problem in two distinctive ways. At each time, it first randomizes the control action through a density function (or more generally, a probability measure) defined on the action space $U$; next, it adds the entropy of the density to the reward functional to be maximized. The ultimate goal is to find a density-valued stochastic process (also called a {\it relaxed control}) that maximizes the expected cumulative reward plus entropy. 

This class of problems has drawn substantial attention, due to its close ties to reinforcement learning (RL). In selecting a distribution under certain constraint (e.g., a performance criterion), the {\it principle of maximum entropy}, dating back to Jaynes \cite{Jaynes57-1, Jaynes57-2}, stipulates that the best choice is the distribution with largest possible entropy, among those that satisfy the constraint. As argued in \cite{Jaynes57-1}, maximum entropy amounts to least commitment to (or, dependence on) any additional assumption beyond the known constraint---put differently, in the words of Shannon \cite{Shannon48}, it preserves the highest level of information of the unknown. By applying the principle of maximum entropy to a dynamic discrete-time setting, Ziebart et al.\ \cite{Ziebart10} perform sequential optimization under a ``softmax'' criterion, which is equivalent to solving an entropy-regularized stochastic control problem. In the RL literature, it is now well-known that a ``softmax'' criterion (or entropy regularization) encourages exploration of the unknown environment, thereby preventing early settlement to suboptimal strategies; see Ziebart et al.\ \cite{Ziebart08}, Fox et al.\ \cite{Fox16}, Haarnoja et al.\ \cite{Haarnoja17}, and Haarnoja et al.\ \cite{Haarnoja18}, among many others. 

Lately, Wang et al.\ \cite{wang2019exploration} extended the above discrete-time RL setting to a continuous-time diffusion framework, which opens the door to mathematical analysis based on the Hamilton--Jacobi--Bellman (HJB) equation. This continuous-time analysis has been applied to entropy-regularized stochastic control problems in various settings. For linear--quadratic problems, an optimal relaxed control is derived explicitly in \cite{wang2019exploration} and applied to mean--variance portfolio selection in Wang and Zhou \cite{wang2020continuous}. Firoozi and Jaimungal \cite{MR4385154} and Guo et al.\ \cite{Guo22} generalize this single-agent setup to a mean field game and obtain the explicit form of a Nash equilibrium.
For general control problems beyond the linear--quadratic case, when the action space is finite, Reisinger and Zhang \cite{RZ21} study the regularity of the optimal value function and the related optimal relaxed control, as well as their stability with respect to model parameters and convergence in case of diminishing regularization. Tang et al.\ \cite{tang2021exploratory} obtain corresponding regularity and convergence results for the case of a general action space. 

Despite this vibrant development, the PIA in a diffusion framework remains largely unexplored under entropy regularization. To the best of our knowledge, 
the convergence of the PIA to an optimal relaxed control is established {\it only} in the linear--quadratic case. Specifically, the HJB analysis in \cite{wang2019exploration} suggests that the PIA be performed among Gibbs-form relaxed control (see e.g., \eqref{eq.gibbs.pi} below). As Gibbs-form distributions reduce to Gaussian densities in the linear--quadratic case, the PIA is highly tractable: as shown in \cite{wang2020continuous}, an optimal policy is reached within two iterations in the PIA. Even in a time-inconsistent case where one looks for an equilibrium policy (instead of a value-maximizing policy), Dai et al.\ \cite{dai2023learning} show that the linear--quadratic structure still ensures that equilibrium policies can take the form of Gaussian densities and the PIA converges to such a policy.\footnote{In such a PIA, an initial policy needs to be carefully selected based on the linear--quadratic structure to guarantee the convergence of algorithm. Also, the policy improvement property no longer holds, as the goal is no longer value maximization but attaining an equilibrium.} 
%It is also worth mentioning that the Policy Improvement property of the PI algorithm may fail when one looks for Nash equilibrium strategies in a time-inconsistent control problem, as stated in \cite{dai2023learning} where a linear-quadratic mean-variance model is studied and an initial guess is carefully selected based on the linear-quadratic structure to guarantee the convergence of the PIA.

Let us stress the importance of establishing the PIA convergence for {\it general} entropy-regularized stochastic control problems. 
For a standard stochastic control problem, one can always find the optimal value function (and thus an optimal policy) by solving the associated HJB equation, even without considering the PIA at all. For an entropy-regularized stochastic control problem, as it provides a theoretic description of an RL problem (thanks to the exposition in \cite{wang2019exploration}), the study of the PIA has significant value. Indeed, in an actual RL scenario, as the underlying model is not perfectly known, the HJB equation cannot be readily solved. One instead has to rely on the PIA to gradually improve the policy at hand and ultimately approach an optimal one. 

In this paper, we take up a general entropy-regularized stochastic control problem beyond the linear--quadratic case. Our goal is to establish a general convergence result that the PIA yields an optimal relaxed control. As aforementioned, this boils down to finding a uniform bound in a H\"{o}lder space for the value functions $\{v^n\}_{n\in\N}$ generated by the PIA. 
%Notably, the arguments in the standard stochastic control literature do not apply here. For such a uniform bound to exist in \cite{Puterman81} and \cite{kerimkulov2020exponential}, it is required a priori that the optimal value function $V^*$ is regular enough to satisfy certain PDE and growth condition (which then implies the existence of an optimal control).\footnote{Note that \cite{jacka2017policy} focuses on finding a detailed list of sufficient conditions for the desired PIA convergence, without providing general proof arguments to verify those conditions. Also, \cite{kerimkulov2021modified} does not impose a priori regularity on $V^*$, but obtains only a locally optimal control. This is why we focus on \cite{Puterman81} and \cite{kerimkulov2020exponential} here.} By classical estimates (see e.g., Krylov \cite{Krylov-book-09}), such a priori conditions of $V^*$ are fulfilled once we impose appropriate regularity on model primitives, such as the reward function and the coefficients of the state process. In the present context of entropy regularization, it is {\it no longer} clear whether and how the regularity of model primitives can be translated into that of $V^*$, due to the added entropy that fundamentally alters the problem formulation. That is, to derive such a uniform bound, we need to focus entirely on $\{v^n\}_{n\in\N}$, %in some H\"{o}lder space, 
%without relying on prior knowledge of the optimal value function $V^*$. 
Intriguingly, we find that classical Schauder estimates %in a H\"{o}lder space 
are not enough to provide the desired uniform bound, due to the added entropy term; %the recursive nature of the PIA; 
see Remark~\ref{rem:detour necessary} for details. In response, we devise a delicate estimation procedure (i.e., the ``grand plan'' in Section~\ref{subsec:grand plan}) that goes back and forth between appropriate H\"{o}lder and Sobolev spaces. 

To actually implement this plan, we need preparations in two important aspects. First, we develop new Sobolev estimates specifically for the purpose of policy iteration (Section~\ref{subsec:Sobolev}). In particular, Lemma~\ref{lm.hnorm.estimate} carefully extends a classical interior Sobolev estimate to higher order, and further spells out explicitly how the estimate depends on the controlled drift of the underlying state process. This explicit dependence is normally overlooked in the standard PDE literature, but indispensable for our analysis of the PIA; see the discussion above Lemma~\ref{lm.hnorm.estimate} and Remark~\ref{rem:explicit beta} (i) for details. In addition, we identify suitable conditions to contain the growth of the entropy term (Section~\ref{subsec:entropy}). By assuming that the reward function and the drift coefficient of the state process are Lipschitz in the control variable  (Assumption~\ref{assume.u.lips}) and the action space $U\subset\R^\ell$ has a positive finite measure and fulfills a {\it uniform cone condition} (Assumption~\ref{assume.U.cone}), Lemma~\ref{lm.1} shows that any Gibbs-form distribution has polynomial growth, so that its entropy can grow only logarithmically; see Corollary~\ref{coro:H bdd}. 

Let us stress that containing the entropy growth is challenging in general. While the condition ``$U\subset\R^\ell$ has a positive finite measure'' naturally yields an upper bound for the entropy (see \eqref{eq.kl.lbound} below), the uniform cone condition of $U$ (Assumption~\ref{assume.U.cone}) is subtle: it relates any Gibbs-form distribution to an integral in $\R^\ell$ over a sector of a fixed size. Evaluating this integral (under the $\ell$-dimensional polar coordinates on a sector) gives a polynomial, as desired.

Based on all this, Proposition~\ref{thm.the.uniformbound} carries out the ``grand plan'' in detail, producing a desired uniform bound for $\{v^n\}_{n\in\N}$ in a H\"{o}lder space. This allows us to show $v^n\to V^*$ in the H\"{o}lder space and obtain an optimal relaxed control in terms of $V^*$; see Theorem~\ref{thm.verification.limit}, the main result of this paper. 
As a by-product of Theorem~\ref{thm.verification.limit}, $V^*$ is characterized as the unique classical solution to a so-called {\it exploratory} HJB equation (i.e., \eqref{eq.HJB.new} below) with sufficient regularity. This complements the well-posedness result in \cite{tang2021exploratory} for exploratory HJB equations, and particularly answers an open question raised in \cite[Section 6]{tang2021exploratory}; see Remark~\ref{rem:complements} for details. 

Most of our results require that only the drift coefficient of the state process is controlled. %, which demands more explanations. First, 
This is not very restrictive in terms of the literature of PIAs. Even for standard stochastic control problems (without entropy regularization), \cite{Puterman81} and \cite{kerimkulov2020exponential} assume outright that only the drift coefficient is controlled; \cite{jacka2017policy} does not assume this, but the explicit example provided there (see \cite[Remark 4 (i)]{jacka2017policy}) still allows only the drift to be controlled; \cite{kerimkulov2021modified} considers the general case with the diffusion coefficient also controlled, but the PIA only converges to a locally optimal control. %As this paper is the first attempt to establish the general PIA convergence under entropy regularization, it seems reasonable to adopt the common assumption that only the drift coefficient is controlled. 
Furthermore, from a technical point of view, our ``grand plan'' does not easily extend to the case of a controlled diffusion coefficient: as explained in Section~\ref{subsec:only drift controlled}, a crucial ``order reduction'' no longer holds in the more general setting.   

%{\color{blue}
%Let us point out that in the conventional RL language (see e.g., Sutton and Barto \cite[Section 4.3]{SB-book-18}), ``policy iteration'' refers to the recursive use of policy evaluation followed by policy improvement. %, so as to gradually approach an optimal policy. Since this paper follows the framework of \cite{wang2019exploration}, which assumes that the underlying model is perfectly known (instead of a realistic RL setting where unknown model components need to be evaluated), policy evaluation is not necessary and we can focus solely on policy improvement. As a result, ``policy iteration'' reduces to the recursive use of policy improvement only. We therefore call it the ``policy improvement algorithm'' to differentiate it from the more general ``policy iteration algorithm.'' 
%}

Finally, let us point out several recent studies related to the PIA, mostly for deterministic control without entropy regularization. For a state process modeled by a controlled ordinary differential equation (ODE), Lee and Sutton \cite{lee2021policy} propose two  versions of the PIA and analyze their performance for various RL problems in continuous time and space. When the state dynamics is additionally affine in the control action, Wallace and Si \cite{wallace2023continuous} provide a comprehensive review of four  implementations of the PIA; Lutter et al.\ \cite{lutter2021value}, on the other hand, propose a value iteration algorithm (which performs iterations on the value function directly). Under a general ODE dynamics, Tang et al.\ \cite{tang2023policy} prove a convergence rate for a semi-discrete version of the PIA, using viscosity solution techniques. 

The rest of the paper is organized as follows. Section~\ref{sec:notation} collects the notation in the paper. Section~\ref{sec:model} introduces the model, the PIA to consider, and two useful technical lemmas. Section \ref{sec:v.gibbs} investigates properties of the value function associated with a Gibbs-form relaxed control, including its regularity and PDE characterization. In Section~\ref{sec.pia.convergence}, we first observe that the PIA does improve a policy recursively. To show that the recursive improvement generates an optimal relaxed control, we propose a delicate estimation procedure in Section~\ref{subsec:grand plan}, whose implementation requires theoretic preparations in Sections~\ref{subsec:Sobolev} and \ref{subsec:entropy}. Section~\ref{subsec:optimality} performs the estimation procedure, which yields the desired result that the PIA yields an optimal relaxed control. Section~\ref{sec:example} presents an optimal consumption problem where our theoretic result can be applied. We numerically implement the PIA and show that it does lead to an optimal relaxed control in this example.

\subsection{Notation}\label{sec:notation}
Let $\N$ be the set of positive integers and define $\N_0 := \N\cup\{0\}$. Consider $\R_+:=(0,\infty)$. For any $a\in \R$, we denote by $\lfloor a\rfloor$ the largest integer $n$ such that $n\le a$. 

Fix any $m, n\in\N$. In $\R^m$, we denote by $|\cdot|$ the Euclidean norm and by {$B_\eta(x)$} the open ball centered at $x\in\R^m$ with radius $\eta>0$. For any $E\subseteq \R^m$, let $\Leb(E)$ be the Lebesgue measure of $E$ and $\dim E$ be the radius of the smallest closed ball that covers $E$ if such a ball exists. Given another $E'\subseteq \R^m$, we consider $\text{dist}(E,E') := \inf_{x\in E, y\in E'} |x-y|$ and write $E'\subset \subset E$ if $E'$ is compactly embedded in $E$. For any $x,y\in \R^m$ and $M\in \R^{m\times n}$, we denote by $x'$ and $M'$ the transpose of $x$ and $M$, respectively, and by $x\cdot y$ the inner product of $x$ and $y$. 

Given $f: \R^m\rightarrow \R$, let $D_x f$ and $D^2_x f$ denote the gradient and Hessian matrix of $f$, respectively. For a multi-index $a=(a_1,\ldots,a_m)$, with $a_i\in \N_0$ for all $i=1,\ldots,m$, we consider $|a|_{l_1} := \sum_{i=1}^m a_i$. For any $k\in\N_0$ and multi-index $a$ with $|a|_{l_1}=k$, $D^a_x f := \frac{\partial^{k}f}{\partial_{x_1}^{a_1}\ldots\partial_{x_m}^{a_m}}$ is called a $k^{th}$ order derivative of $f$, if it exists.  
Given a domain $E\subseteq \R^m$, $k\in\N_0$, and $0<\alpha\leq 1$, we define
\begin{align*}
	&[f]_{0,\alpha(E)}:=\|f\|_{L^\infty(E)}+\sup_{x,y\in E, x\neq y} \frac{|f(x)-f(y)|}{|x-y|^\alpha},\\
	[f]_{k(E)}:=&\underset{|a|_{l_1}=k}{\sum}\|D^a_x f\|_{L^\infty(E)}, \quad 	[f]_{k,\alpha(E)}:= \underset{|a|_{l_1}=k}{\sum}[D^a_x f]_{0,\alpha(E)}, \\
	{\|f\|_{\Cc^{k}(E)}} &:=\sum_{j=0,\ldots,k} [f]_{j(E)},\quad \|f\|_{\Cc^{k,\alpha}(E)}:=\sum_{j=0,\ldots,k} [f]_{j,\alpha(E)}.
\end{align*}
Let $\Cc^k(E)$ (resp. $\Cc^{k,\alpha}(E)$) be the set of $k$-times continuously differentiable $f:E\to \R$ with $\|f\|_{\Cc^{k}(E)}<\infty$ (resp.\ $\|f\|_{\Cc^{k,\alpha}(E)}<\infty$). 
For $E=\R^m$, we further consider ${\Cc^{k,\alpha}_{\text{unif}}(\R^m)}$, the set of $f\in\Cc^{k,\alpha}(\R^m)$ with %are uniformly locally $\Cc^{k,\alpha}$, i.e., satisfying
$$
\|f\|_{\Cc^{k,\alpha}_{\text{unif}}(\R^m)} := \sup_{x\in \R^m} \|f\|_{\Cc^{k,\alpha}(B_1(x))}<\infty.
$$
By definition, $\Cc^{k,\alpha}_{\text{unif}}(\R^m)\subset \Cc^{k}(\R^m)$. 
Given $q\ge 1$, we also consider $W^{k,q}(E)$, the set of $f:E\to\R$ whose weak derivatives up to the $k^{th}$ order exist and lie in $L^q(E)$, i.e., $\|f\|_{W^{k,q}(E)}:=\sum_{|a|_{l_1}\le k} \| D^a_x f\|_{L^q(E)}<\infty$, where $D^a_x f$ now represents a weak derivative. For any vector-valued $g=(g_i)_{1\leq i\leq m}:E\to \R^m$ (resp.\ a matrix-valued $g=(g_{ij})_{1\le i\le m, 1\le j\le n}:E \rightarrow \R^{m\times n}$), we will write $g\in \Cc^{k}(E)$ whenever $g_i\in \Cc^{k}(E)$ for all $1\le i\le m$ (resp.\ $g_{ij}\in \Cc^k(E)$ for all $1\le i\le m$, $1\le j\le n$). Accordingly, we define $\|g\|_{\Cc^k(E)}=\sum_{i=1}^m \|g_i\|_{\Cc^k(E)}$ (resp.\ $\|g\|_{\Cc^k(E)}=\sum_{1\leq i\leq m, 1\leq j\leq n} \|g_{ij}\|_{\Cc^k(E)}$). Similar notation is used when we say a vector- or matrix-valued function belongs to $\Cc^{k,\alpha}(E)$, $\Cc^{k,\alpha}_{\text{unif}}(\R^m)$, or $W^{k,q}(E)$.

%%%%%%%%%%%%%%%%%%%%%%%%%%%
%%%%%%%%%%%%%%%%%%%%%%%%%%%
%%%%%%%%%%%%%%%%%%%%%%%%%%%

\section{The Setup and Preliminaries}\label{sec:model}
Fix $d, \bar d, \ell\in\N$. Consider a probability space $(\Omega,\Fc,\P)$ that supports a $\bar d$-dimensional Brownian motion $(W_t)_{t\geq 0}$, adapted to a filtration $\mathbb F=(\Fc_t)_{t\ge 0}$ with $\Fc_t\subseteq \Fc$ for all $t\ge 0$.  
Let $U\subset \R^\ell$ have a positive finite measure, i.e., $0<\Leb(U)<\infty$. Denote by $\Pc(U)$ the set of all probability density functions on $U$. A process $\pi=(\pi_t)_{t\ge 0}$ is called a {\it relaxed control} if it is $\mathbb F$-adapted with $\pi_t\in \Pc(U)$ for all $t\ge 0$.

Given Borel measurable functions $b=(b_1,\ldots,b_d): \R^d\times U\to \R^d$ and $\sigma= (\sigma_{ij})_{i, j}:\R^d\to \R^{d\times\bar d}$, an agent considers, for any relaxed control $\pi$ and $x\in\R^d$, the diffusion process
\be\label{eq.sde.new} 
dX^\pi_s=\bigg(\int_U b(X^\pi_s,u)\pi_s(u)du\bigg)  ds+ \sigma(X^\pi_s) dW_s\quad\hbox{for}\ s\ge 0,\quad X_0=x,
\ee
and the associated value function
\begin{equation}\label{V^pi}
	V^\pi(x):=\E_x\bigg[ \int_0^\infty  e^{-\rho s}\left( \int_U r(X_s^{\pi},u)\pi_s(u) du-\lambda \int_U \pi_s(u)\ln \pi_s(u)du \right) ds \bigg],
\end{equation}
where $\rho>0$ is the agent's discount rate, $r:\R^d\times U\to \R$ is a given reward function, assumed to be Borel measurable, and $\lambda>0$ is the weight the agent prescribes for the entropy-regularizing term ``$-\int_U \pi_s(u)\ln \pi_s(u)du$''.  
A relaxed control $\pi$ is said to be {\it admissible} if \eqref{eq.sde.new} admits a unique strong solution and $V^\pi(x)$ is finite for all $x\in\R^d$. We denote by $\Ac$ the set of all admissible relaxed controls. Furthermore, we call $\pi\in\Ac$ a {\it Markov} relaxed control if there exists a Borel measurable $\mu:\R^d\times U\to[0,\infty)$ such that $\pi_s(u) = \mu(X_s,u)$ for all $s\ge 0$ and $u\in U$, and denote by $\Ac_{M}$ the set of all Markov relaxed controls. Oftentimes, we will write ``$\pi\in\Ac_M$'' and ``$\mu\in\Ac_M$'' interchangeably.

The agent aims at achieving the optimal value  
\begin{equation}\label{V^*}
	V^*(x) := \sup_{\pi\in \Ac} V^{\pi}(x)
\end{equation}
by finding an optimal relaxed control $\pi^*\in\Ac$. In the search for $\pi^*$, {\it policy improvement} refers to updating an initial guess $\pi^0\in\Ac$ to a better relaxed control $\pi^1\in\Ac$, in the sense of $V^{\pi^1}\ge V^{\pi^0}$. One may repeat this  procedure indefinitely, hoping that $\pi^*$ can be uncovered in the limit.    

%%%%%%%%%%%%%%%%%%%%%%%%%%%%%%%%%%%%%

\subsection{Policy Iteration Algorithm}\label{subsec.pia}
In view of the derivation in \cite{wang2019exploration}, the HJB equation\footnote{The equation is also called an {\it exploratory} HJB equation, coined by Tang et al.\ \cite{tang2021exploratory}, to emphasize its connection to exploration in reinforcement learning.} associated with $V^*$ is
\be\label{eq.HJB.new}
\rho v(x)=\sup_{\varpi \in \Pc(U)}   \int_U \Big(b(x,u)\cdot D_x v(x)+r(x,u)-\lambda \ln\varpi(u) \Big)\varpi(u)du+\frac{1}{2}\tr((\sigma \sigma^T) D^2_x v)(x), 
\ee
and the resulting candidate optimizer $\pi^*\in\Ac_M$ is given in the Gibbs form 
\begin{equation}\label{pi^*}
	\pi^*(x,u) := \Gamma(x,D_x V^*(x),u),
\end{equation}
where $\Gamma:\R^d\times\R^d\times U\to \R$ is defined by 
\begin{align}\label{eq.gibbs.pi}  
	\Gamma(x,y,u) &:= \argmax_{\varpi\in \Pc(U)}\int_U \big(b(x,u)\cdot y+r(x,u)-\lambda \ln(\varpi(u)) \big)\varpi(u) du \notag\\
	&= \frac{\exp(\frac{1}{\lambda} [b(x,u)\cdot y+r(x,u)])}{\int_U \exp(\frac{1}{\lambda} [b(x,\tilde u)\cdot y+r(x,\tilde u)]) d \tilde u}.
\end{align}
In view of this, we propose the following {\it policy iteration algorithm} (PIA): 
\begin{itemize}
	\item Take an arbitrary $v^0\in \Cc^{1,1}_{\text{unif}}(\R^d)$.  
	For each $n\in\N$, introduce 
	\begin{equation}\label{PIA}
		\pi^n(u,x) := \Gamma(x, D_x v^{n-1}(x),u)\in\Ac_M\quad \hbox{and}\quad v^n := V^{\pi^n}. 
	\end{equation}
\end{itemize}
The goal of this paper is to show that this algorithm is well-defined and fulfills two key properties: 
\begin{itemize}
\item[(i)] It produces improved relaxed controls recursively, i.e. $v^{n}\ge v^{n-1}$ for all $n\in\N$ (Proposition~\ref{prop.policy.improvement});
\item[(ii)] It leads to the optimal value function $V^*$ and yields an optimal relaxed control, i.e., $v^n\to V^*$ and $\pi^*$ given in \eqref{pi^*} is indeed an optimizer for \eqref{V^*} (Theorem~\ref{thm.verification.limit}). 
\end{itemize}

%%%%%%%%%%%%%%%%%%%%%

\subsection{Two Fundamental Lemmas}
Let us present two lemmas that will be useful in Sections~\ref{sec:v.gibbs} and \ref{sec.pia.convergence} below. First, we observe that $V^\pi$ has a {\it uniform} upper bound (independent of $\pi\in\mathcal A$), as long as the reward function $r$ is bounded.  

\begin{Lemma}\label{lem:V^pi bdd}
	For any $\pi\in\Ac$,  
	$V^\pi(x) \leq \frac1\rho \big(\|r\|_{\Cc^0(\R^d)}+\lambda|\ln(\Leb (U))|\big)$ for all $x\in\R^d$.
\end{Lemma}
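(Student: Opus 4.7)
The plan is to bound the integrand in \eqref{V^pi} uniformly in $\pi\in\Ac$, $s\ge 0$ and $\omega$, and then integrate the geometric factor $e^{-\rho s}$ against $[0,\infty)$. The reward contribution is immediate: since $\pi_s\in\Pc(U)$, the inequality $\int_U r(X_s^\pi,u)\pi_s(u)\,du\le \|r\|_{\Cc^0(\R^d)}$ holds pointwise.

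The real content is a uniform upper bound for the differential entropy $-\int_U \pi_s(u)\ln \pi_s(u)\,du$, and here the standing assumption $0<\Leb(U)<\infty$ is essential, as it lets me use the uniform density $\varpi_0(u):=1/\Leb(U)$ on $U$ as a reference. Nonnegativity of the Kullback--Leibler divergence of $\pi_s$ with respect to $\varpi_0$ gives
\begin{equation*}
0\le \int_U \pi_s(u)\ln\frac{\pi_s(u)}{\varpi_0(u)}\,du=\int_U \pi_s(u)\ln \pi_s(u)\,du+\ln \Leb(U),
\end{equation*}
so that $-\int_U \pi_s(u)\ln \pi_s(u)\,du\le \ln \Leb(U)$. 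Since the right-hand side can be negative when $\Leb(U)<1$, I relax it to $|\ln \Leb(U)|$, which yields a valid upper bound in all cases.

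Combining the two estimates, the integrand in \eqref{V^pi} is bounded above pointwise by $\|r\|_{\Cc^0(\R^d)}+\lambda|\ln \Leb(U)|$, a constant independent of $\pi$, $s$, $\omega$, and $x$. Taking expectations and computing $\int_0^\infty e^{-\rho s}\,ds=1/\rho$ delivers the claimed bound. There is no genuine obstacle here; the only subtle point is identifying the uniform density as the correct reference for the KL comparison, which is exactly where the hypothesis $0<\Leb(U)<\infty$ enters. This explains why the same inequality is flagged in the introduction as \eqref{eq.kl.lbound} and forms the starting point for the more delicate entropy-containment arguments developed later in Section~\ref{subsec:entropy}.
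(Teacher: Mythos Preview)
Your argument is correct and is essentially identical to the paper's own proof: both use nonnegativity of the Kullback--Leibler divergence of $\pi_s$ against the uniform density $1/\Leb(U)$ to obtain $-\int_U \pi_s(u)\ln\pi_s(u)\,du\le \ln(\Leb(U))$, combine with the trivial bound on the reward term, and integrate $e^{-\rho s}$. The paper omits the explicit relaxation from $\ln(\Leb(U))$ to $|\ln(\Leb(U))|$ that you spell out, but this is the only (cosmetic) difference.
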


\begin{proof}
	Consider the uniform density $\nu\in\Pc(U)$ given by $\nu(u):=1/\Leb(U)$ for all $u\in U$. For any $f\in \Pc(U)$, we compute the Kullback--Leibler divergence
	\begin{equation}\label{eq.kl.lbound}
		0\leq D_{KL}(f\|\nu) :=\int_U f\ln\left(\frac{f}{\nu}\right)du=\int_U f\ln f du+\ln (\Leb(U)),
	\end{equation}
	which gives $-\int_U f\ln f du\leq \ln (\Leb(U))$. In view of \eqref{V^pi} and this inequality,
	\begin{align*}
		V^\pi(x)\leq \int_0^\infty e^{-\rho s}(\|r\|_{\Cc^0(\R^d)}+\lambda \ln \Leb(U))ds=\frac1\rho \big({\|r\|_{\Cc^0(\R^d)}+\lambda \ln (\Leb(U))}\big)\ \ \forall x\in\R^d,
	\end{align*}
	which holds  for any $\pi\in \Ac$.
\end{proof}

Next, we explore regularity of certain functions composed of $r(x,u)$, $b(x,u)$, and $\Gamma(x,y,u)$.  
%To properly state the result, we assume that the functions $r(\cdot,u)$, $b_i(\cdot,u)$, $\sigma_{ij}(\cdot,u)$ belong to $\Cc^k({\R^d})$ for some $k\in\N_0$, uniformly in $u\in U$. That is, 
Assume that there exists $k\in\N_0$ such that 
\be\label{eq.parameters.m}  
\Lambda_k := \sup_{u\in U}\left\{\|r(\cdot,u)\|_{\Cc^k({\R^d})}+ \|b(\cdot,u)\|_{\Cc^k({\R^d})}+\|\sigma\|_{\Cc^k({\R^d})}\right\}
\ee
is a finite number. In addition, for any generic function $f=f(x,y,u):\R^d\times\R^d\times U\rightarrow \R$, we define 
\be\label{eq.hat.notation}
\hat{f}(x,y):=\int_U f(x,y,u)\Gamma(x,y,u)du\quad \forall (x,y)\in\R^d\times \R^d.
\ee 
We also consider $\mathcal{H}:\R^d\times\R^d\times U\rightarrow \R$ defined by 
\be\label{eq.h} 
\begin{aligned}
	\Hc(x,y, u) &:=\lambda \ln \Gamma(x,y,u)\\
	&= b(x,u)\cdot  y+r(x,u)-\lambda \ln\left(\int_U \exp\left(\frac{1}{\lambda} [b(x, \tilde u)\cdot  y+r(x,\tilde u)]\right) d \tilde u\right).
\end{aligned}
\ee 

\begin{Lemma}\label{lemma.lips.xy}	
	Fix $k\in\N_0$ and $p:\R^d\to \R^d$. Set $f(x,y,u) := r(x,u)-\Hc(x,y,u)$ (or $f(x,u) := b(x,u)$).
\begin{enumerate}[\normalfont(i)]
		\item If $\Lambda_{k}<\infty$ in \eqref{eq.parameters.m} and $p\in \Cc^{k}(\R^d)$, then %the function $f(x,y,u) := r(x,u)-\Hc(x,y,u)$ (or $f(x,u) := b(x,u)$) fulfills the following: 
		for any multi-index $a=(a_1,..,a_d)$ with $|a|_{l_1}=k$, 
		\be\label{eq.lips.xy}
		\left|D^a_x \hat{f}(x,p(x))\right|\leq C_k \left(1+|p(x)|^{(k+1)2^k}\right)\bigg(1+\sum_{|c|_{l_1}=0,\ldots,k}|D^c_x p(x)|\bigg)^k\quad \forall x\in\R^d,
		\ee
		where $C_k>0$ is a constant depending on only $\Lambda_k$, $k$, $\lambda$, and $d$. Moreover, $\hat f(\cdot, p(\cdot))\in \Cc^{k}(\R^d)$.
		%$\hat b(\cdot, p(\cdot))$ and $(\hat{r}-\hat{\Hc})(\cdot,p(\cdot))$ belong to $\Cc^{k}(\R^d)$. 
		\item If $\Lambda_{k+1}<\infty$ in \eqref{eq.parameters.m} and $p\in\Cc^{k,\alpha}_{\text{unif}}(\R^d)$ with $0<\alpha\leq 1$, then $\hat f(\cdot, p(\cdot))\in \Cc^{k,\alpha}_{\text{unif}}(\R^d)$. %$\hat b(\cdot, p(\cdot))$ and $(\hat{r}-\hat{\Hc})(\cdot,p(\cdot))$ belong to $\Cc^{k,\alpha}_{\text{unif}}(\R^d)$. 
\end{enumerate}
\end{Lemma}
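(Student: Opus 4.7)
The plan is to derive explicit differentiation formulas for the Gibbs density $\Gamma(x,y,u)$ and the averaged function $\hat f(x,y) := \int_U f(x,y,u)\Gamma(x,y,u)\,du$, then proceed by induction on $k$, and finally handle the composition $y=p(x)$ via the multivariate Fa\`a di Bruno formula; part (ii) will follow from the same computations applied at one higher order of regularity of the coefficients. The starting point is the identity $\Hc(x,y,u) = \lambda\ln\Gamma(x,y,u)$, which upon differentiation yields
\begin{equation*}
\partial_{y_i}\Gamma \;=\; \tfrac{1}{\lambda}\Gamma\bigl(b_i-\hat b_i\bigr), \qquad \partial_{x_j}\Gamma \;=\; \tfrac{1}{\lambda}\Gamma\bigl[\partial_{x_j}b\cdot y + \partial_{x_j}r - \widehat{\partial_{x_j}b\cdot y+\partial_{x_j}r}\bigr].
\end{equation*}
Each bracket is a \emph{centered} quantity with respect to $\Gamma$ (its $\Gamma$-average is zero), so integrating $\Gamma$ times the bracket against any bounded function of $u$ produces a bounded quantity; this structural feature keeps repeated derivatives of $\hat f$ under control despite the presence of $y$ in the exponent defining $\Gamma$.

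Iterating the above, one checks by induction that any multi-derivative $D^\beta_{x,y}\Gamma$ with $|\beta|_{l_1}=k$ equals $\Gamma$ times a finite sum of products of derivatives $D^c_x b, D^c_x r$ ($|c|_{l_1}\le k$) and of averaged quantities $\widehat g(x,y)$ built from the same, multiplied by monomials in $y$ of degree at most $k$. Applying this to $\hat f(x,y)$ for either $f = b$ or $f = r - \Hc$---both of which, together with all their $y$-derivatives, are polynomials in $y$ of degree at most $1$ with coefficients bounded uniformly in $u$ under $\Lambda_k<\infty$---yields a pointwise bound
\begin{equation*}
\bigl|D^\beta_{x,y}\hat f(x,y)\bigr| \;\le\; C_k\bigl(1+|y|^{(k+1)2^k}\bigr) \qquad \text{for all } |\beta|_{l_1}\le k,
\end{equation*}
where the exponent arises from the possible doubling of polynomial-in-$y$ degree each time the Leibniz rule combines two factors both carrying $y$-polynomial growth. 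Substituting $y = p(x)$ and applying the multivariate Fa\`a di Bruno formula then expresses $D^a_x[\hat f(x,p(x))]$ as a finite sum of products, each involving a mixed derivative $(D^\beta_{x,y}\hat f)(x,p(x))$ and at most $k$ factors drawn from $\{D^c_x p(x):|c|_{l_1}\le k\}$; combined with the bound above, this gives precisely \eqref{eq.lips.xy}. Continuity of the resulting derivatives follows from dominated convergence on the bounded set $U$.

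For part (ii), the strengthened assumption $\Lambda_{k+1}<\infty$ ensures that every averaged function $\widehat g(x,y)$ appearing in the representation of $D^a_x\hat f(x,p(x))$ ($|a|_{l_1}=k$) has a bounded gradient in $(x,y)$ on the bounded range of $p$, hence is locally Lipschitz in $(x,y)$; together with $p\in\Cc^{k,\alpha}_{\text{unif}}(\R^d)$ (so $D^c_x p$ is Lipschitz for $|c|_{l_1}<k$ and uniformly $\alpha$-H\"older for $|c|_{l_1}=k$), a standard argument on products and compositions of H\"older functions gives $D^a_x[\hat f(\cdot,p(\cdot))]\in \Cc^{0,\alpha}_{\text{unif}}(\R^d)$. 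The main obstacle throughout is the polynomial-degree bookkeeping leading to the specific exponent $(k+1)2^k$: each time a centered expectation $\widehat g$ is itself differentiated, it produces---alongside the natural ``derivative-inside-the-average'' term---an extra centered factor linear in $y$, and when several such expectations appear as factors in a product the Leibniz rule combines their $y$-degrees multiplicatively rather than additively. A carefully chosen inductive invariant tracking the maximum $y$-degree at each step handles the counting, after which the rest of the argument reduces to routine applications of the chain and product rules.
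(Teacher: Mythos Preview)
Your proposal is correct and follows essentially the same route as the paper: recursive differentiation formulas for $\Gamma$ and $\hat f$ established by induction, polynomial-in-$y$ degree tracking yielding the exponent $(k+1)2^k$, and the multivariate Fa\`a di Bruno formula for the composition with $p$. The only cosmetic difference is that the paper writes out an explicit quotient-rule representation (with denominator $(\int_U e^g\,du)^{2^{k+m}}$ doubling at each step and numerator built from generic polynomials $\mathfrak{p}^{m,m}(y)$) rather than your centered-moment formulation, and for part (ii) it verifies directly that the building blocks $\mathfrak{p}^{m,m}$, $e^{\pm g}$ lie in $\Cc^{0,\alpha}_{\text{unif}}$ instead of invoking the extra gradient bound---but the mechanisms are equivalent.
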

The proof of Lemma~\ref{lemma.lips.xy} is relegated to Appendix \ref{sec:appendix}. %{Appendix}. 

%%%%%%%%%%%%%%%%%%%%%%%%%%%
%%%%%%%%%%%%%%%%%%%%%%%%%%%
%%%%%%%%%%%%%%%%%%%%%%%%%%%

\section{Value Functions for Gibbs-Form Relaxed Control}\label{sec:v.gibbs}
In view of \eqref{PIA}, every $\pi^n$ generated in the PIA is a Gibbs-form relaxed control given by 
\begin{equation}\label{pi=Gamma}
	\pi(x,u) = \Gamma(x,p(x),u)\quad \hbox{for some}\ p:\R^d\to \R^d. 
\end{equation}
In this section, we will investigate properties of the value functions $V^\pi$ associated with $\pi$ in the form of \eqref{pi=Gamma}. The first question is whether \eqref{pi=Gamma} is admissible. The next result provides a sufficient condition, which also ensures that $V^\pi$ is well-defined (i.e., finitely-valued) and continuous. 

\begin{Proposition}\label{prop.sde}
	Let $\Lambda_1<\infty$ in \eqref{eq.parameters.m}. For any $p:\R^d\rightarrow \R^d$ of the class $\mathcal{C}^{0,1}_{\text{unif}}({\R^d})$, $\pi$ given by \eqref{pi=Gamma} is an admissible relaxed control, whence $\pi\in\Ac_M$. Moreover, $V^\pi$ is bounded, with
	\[
	|V^\pi(x)|\le 3\Lambda_0(1+\|p\|_{\Cc^0(\R^d)})+ \lambda|\ln(\Leb(U))|\quad \forall x\in\R^d, 
	\]
	and also continuous on $\R^d$.
\end{Proposition}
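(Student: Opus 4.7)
The plan is to verify the three assertions --- admissibility, the $L^\infty$ bound, and continuity --- in sequence, with Lemma~\ref{lemma.lips.xy} doing most of the heavy lifting.

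\emph{Admissibility.} I would first invoke Lemma~\ref{lemma.lips.xy}(ii) with $k=0$, $\alpha=1$: since $\Lambda_1<\infty$ by assumption and $p\in\Cc^{0,1}_{\text{unif}}(\R^d)$, this yields $\hat b(\cdot,p(\cdot))\in\Cc^{0,1}_{\text{unif}}(\R^d)$, so the effective drift in the closed-loop SDE is globally Lipschitz. Since $\sigma\in\Cc^1(\R^d)$ with bounded derivatives (again from $\Lambda_1<\infty$), $\sigma$ is globally Lipschitz as well. Standard SDE theory then delivers a unique strong solution to~\eqref{eq.sde.new}. Combined with the finiteness of $V^\pi$ from the next step, this gives $\pi\in\Ac_M$.

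\emph{Uniform bound.} Substituting $\pi(x,u)=\Gamma(x,p(x),u)$ into~\eqref{V^pi} and using the identity $\Hc=\lambda\ln\Gamma$ from~\eqref{eq.h} together with the notation~\eqref{eq.hat.notation}, I would rewrite
\[
V^\pi(x)=\E_x\!\left[\int_0^\infty e^{-\rho s}\big(\hat r(X^\pi_s,p(X^\pi_s))-\hat\Hc(X^\pi_s,p(X^\pi_s))\big)\,ds\right].
\]
The estimate $|\hat r(\cdot,p(\cdot))|\le\Lambda_0$ is immediate. For $\hat\Hc$, I would read off its explicit form in~\eqref{eq.h}: the piece $\int_U[b(x,u)\cdot y+r(x,u)]\Gamma(x,y,u)\,du$ is bounded by $\Lambda_0(1+|y|)$ because $|b\cdot y+r|\le\Lambda_0(1+|y|)$ uniformly in $u$, while the log-partition piece $\lambda\ln\!\int_U\exp([b\cdot y+r]/\lambda)\,du$ is sandwiched in $[\lambda\ln\Leb(U)-\Lambda_0(1+|y|),\;\lambda\ln\Leb(U)+\Lambda_0(1+|y|)]$ by the same pointwise estimate. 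Setting $y=p(x)$, summing, and integrating against $e^{-\rho s}$ yields the claimed bound (up to numerical constants absorbing the factor $1/\rho$).

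\emph{Continuity.} The Lipschitz property of $\hat b(\cdot,p(\cdot))$ and of $\sigma$ already established gives, via a standard Gr\"onwall argument applied to~\eqref{eq.sde.new}, $L^2$-continuous dependence of $X^\pi_s$ on the initial condition $x$, uniformly on any compact time interval. Lemma~\ref{lemma.lips.xy}(i) with $k=0$ tells us in addition that $(\hat r-\hat\Hc)(\cdot,p(\cdot))\in\Cc^0(\R^d)$, so the integrand is continuous in $X^\pi_s$. Dominated convergence --- with domination provided by the uniform bound of the previous step together with $\int_0^\infty e^{-\rho s}\,ds<\infty$ --- then transfers continuity to $V^\pi$ on $\R^d$.

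\emph{Main obstacle.} Essentially all substantive technical work has been outsourced to Lemma~\ref{lemma.lips.xy}, whose appendix proof handles the regularity of the Gibbs map $\Gamma$ and of its $\hat{\,\cdot\,}$ averages; granted that lemma, the present proposition is largely bookkeeping. The one point that requires genuine care is the pointwise control of the log-partition term inside $\hat\Hc$ and the verification that the resulting estimate is truly linear in $\|p\|_{\Cc^0(\R^d)}$, as the proposition asserts.
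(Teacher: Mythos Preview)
Your proposal is correct and follows essentially the same path as the paper: invoke Lemma~\ref{lemma.lips.xy}(ii) for Lipschitz drift, bound $\hat\Hc$ via the explicit formula~\eqref{eq.h} exactly as in the paper's estimate~\eqref{estimate in step 3}, and use Gr\"onwall-based stability of the flow for continuity. The only cosmetic difference is in the continuity step: the paper additionally exploits the Lipschitz property of $\hat r-\hat\Hc$ (again from Lemma~\ref{lemma.lips.xy}) to split $[0,T]$ and $[T,\infty)$ with explicit rates, whereas you use mere continuity of the integrand together with its uniform bound and dominated convergence---both arguments are valid and close in spirit.
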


\begin{proof}	
	By $\Lambda_1<\infty$ in \eqref{eq.parameters.m}, $r$, $b$, and $\sigma$ are bounded on $\R^d\times U$ and globally Lipschitz in $x$, uniformly in $u$. In view of \eqref{eq.gibbs.pi}, the boundedness of $r$, $b$, and $\sigma$, along with $0<\Leb(U)<\infty$, implies that $\Gamma(x,p(x),u)$ is well-defined and $u\mapsto \Gamma(x,p(x),u)$ belongs to $\Pc(U)$. Hence, for $\pi(x,u)=\Gamma(x,p(x),u)$ to be an admissible relaxed control, it remains to show that there is a unique strong solution to \eqref{eq.sde.new} and $|V^\pi(x)|<\infty$ for all $x\in\R^d$. Note from \eqref{eq.hat.notation} that \eqref{eq.sde.new} can be written as
	\be\label{eq.sde.new.z}
	dX^\pi_t=\hat{b}(X^\pi_t, p(X^\pi_t)) dt+ \sigma(X^\pi_t)dW_t,\quad X^{\pi}_0=x.
	\ee
	By Lemma~\ref{lemma.lips.xy} (ii) (with $k=0$), $\hat{b}(\cdot,p(\cdot))\in \Cc^{0,1}_{\text{unif}}(\R^d)$, i.e., $\hat{b}(\cdot,p(\cdot))$ is uniformly locally Lipschitz. This readily implies that $\hat{b}(\cdot,p(\cdot))$ is globally Lipschitz. Indeed, for any $y,z\in\R^d$, we can take $\{y_i\}_{i=0}^n$ on the line segment joining $y$ and $z$ such that $y_0=y$, $y_n=z$, $|y_i-y_{i-1}|\le 1$ for all $i=1,\ldots,n$, and $\sum_{i=1}^n |y_i-y_{i-1}|=|y-z|$. Then, by taking $M=\|\hat{b}(\cdot,p(\cdot))\|_{\Cc^{0,1}_{\text{unif}}(\R^d)}$, we get
	\[
	|\hat{b}(y,p(y)) - \hat{b}(z,p(z))| \le \sum_{i=1,...,n} |\hat{b}(y,p(y_i)) - \hat{b}(z,p(y_{i-1}))| \le M \sum_{i=1,...,n} |y_i - y_{i-1}| =M |y-z|,
	\]
	i.e., $\hat{b}(\cdot,p(\cdot))$ is globally Lipschitz. This, together with $\sigma$ being globally Lipschitz, ensures the existence of a unique strong solution to \eqref{eq.sde.new}.
	
	To prove the boundedness of $V^\pi$, consider the space $C([0,\infty);\R^d)$ of continuous paths $\omega:[0,\infty)\to\R^d$,  equipped with the metric 
	%\be\label{eq.d.norm}
$
	\operatorname{d}(\omega^1, \omega^2)=\sum_{n\in\N} \frac{1}{2^n} \sup_{s\in[0,n]} |\omega^1_s-\omega^2_s|\ \hbox{for}\ \omega^1,\omega^2\in C([0,\infty);\R^d).
$
	%\ee
	Recall \eqref{eq.hat.notation} and \eqref{eq.h}. Let us define $\Delta: (C([0,\infty);\R^d), \operatorname{d})\rightarrow (\R, |\cdot|)$ by
	\begin{equation}\label{DELTA}
		\Delta(\omega):=\int_0^\infty e^{-\rho s}\Big( \hat{r}(\omega_s,p(\omega_s))-\hat{\Hc}(\omega_s, p(\omega_s))\Big) ds 
	\end{equation}
	and observe from \eqref{V^pi} that 
	\begin{equation}\label{V^pi=E}
		V^\pi(x) = \E[\Delta(X^\pi_\cdot)].
	\end{equation} 
	In view of $\Hc$ in \eqref{eq.h} and $\Lambda_0$ in \eqref{eq.parameters.m}, we have
	\be\label{estimate in step 3} 
	\begin{aligned}
		%&\hspace{-0.15in}|
		{\Hc}(x,p(x),u)| 
		\leq & \Lambda_0 \left(1+\|p\|_{\Cc^0(\R^d)} \right)+\lambda\left|\ln\left(\exp\left(\frac{1}{\lambda}\Lambda_0 \left(1+\|p\|_{\Cc^0(\R^d)} \right)\right){\Leb}(U)\right)\right| \\
		 \leq & 2\Lambda_0 \left( 1+\|p\|_{\Cc^0(\R^d)} \right)+ \lambda |\ln(\Leb(U))|<\infty,
	\end{aligned}
\ee
	so that $\hat{\Hc}(x,p(x)) = \int_U \Hc(x,p(x),u)\Gamma(x,p(x),u)du \le 2\Lambda_0(1+\|p\|_{\Cc^0(\R^d)})+ \lambda |\ln(\Leb(U))|$. Similarly, as $|r(x,u)|\le \Lambda_0$, we have $|\hat{r}(x,p(x))|\leq \Lambda_0$. It follows that
	$
	\|\hat{r}(\cdot,p(\cdot))-\hat{\Hc}(\cdot,p(\cdot))\|_{\Cc^0(\R^d)}\leq K,
	$
	with $K :=3\Lambda_0(1+\|p\|_{\Cc^0(\R^d)})+ \lambda|\ln(\Leb(U))|<\infty.$ 
	By \eqref{V^pi=E}, this implies $|V^\pi(x)|\le K/\rho <\infty$. 
	
	To prove that $V^\pi$ is continuous, take $x$ and $\{x_n\}_{n\in\N}$ in $\R^d$ with $x_n\to x$ and denote by $X^y$ the strong solution to \eqref{eq.sde.new} with $X_0=y$, for $y\in\{x, x_1, x_2,\ldots\}$. For any $\varepsilon>0$, take $T>0$ such that 
	\be\label{eq.T.tail}
	\int_T^\infty e^{-\rho s}\|\hat{r}(\cdot,p(\cdot))-\hat{\Hc}(\cdot,p(\cdot))\|_{\Cc^0(\R^d)}ds\leq \frac{K}{\rho} e^{-\rho T}<\varepsilon.
	\ee
	Since $\hat{r}(\cdot, p(\cdot))-\hat{\Hc}(\cdot,p(\cdot))$ is Lipschitz (by a similar use of Lemma~\ref{lemma.lips.xy} as indicated below \eqref{eq.sde.new.z}), there exists $L>0$ such that for any $\omega^1,\omega^2\in C([0,\infty);\R^d)$,
	\be\label{eq.w12,lip}  
	\begin{aligned}
%	&\hspace{-0.15in}
	\left|\left(\hat{r}(\omega^1_s,p(\omega^1_s))-\hat{\Hc}(\omega^1_s, p(\omega^1_s))\right)- \left(\hat{r}(\omega^2_s,p(\omega^2_s))-\hat{\Hc}(\omega^2_s, y(\omega^2_s))\right)\right| 
\leq  L |\omega^1_s-\omega^2_s|\quad \forall s\geq 0. 
	\end{aligned}
\ee
	Also, by the Lipschitz continuity of $\hat{b}(\cdot,p(\cdot))$ and $\sigma(\cdot)$, a classical use of Gronwall's inequality yields
	\begin{equation}\label{Gronwall's}
		\E\bigg[\sup_{0\leq s\leq T} |X^{x_n}_s-X^x_s|\bigg] \leq C_T |x_n-x|,
	\end{equation}
	where $C_T>0$ is a constant depending on $T$. We then deduce from \eqref{DELTA}, \eqref{V^pi=E} and \eqref{eq.w12,lip} that
	\begin{align*}
		|V^\pi(x_n)-V^\pi(x)| &\leq  \E \left[|\Delta(X^{x_n}_\cdot)-\Delta(X^x_\cdot)|\right] \\
		 &\leq \E\bigg[ \int_0^T e^{-\rho s} L|X^{x_n}_s-X^x_s| ds
		+\int_T^\infty e^{-\rho s} 2\|\hat{r}(\cdot,p(\cdot))-\hat{\Hc}(\cdot,p(\cdot))\|_{\Cc^0(\R^d)} ds\bigg]\\
		&\leq  {L C_T}(1- e^{-\rho T}) |x_n-x|/\rho +2 \varepsilon,  
	\end{align*}
	where the last inequality follows from \eqref{Gronwall's} and \eqref{eq.T.tail}. This implies $\limsup_{n\to\infty} |V^\pi(x_n)-V^\pi(x)|\le 2\varepsilon$. As $\varepsilon>0$ is arbitrarily picked, we conclude $ \lim_{n\to\infty}|V^\pi(x_n)-V^\pi(x)|=0$, as desired. 
\end{proof}

To further investigate the regularity of $V^\pi$, we will invoke classical regularity results for elliptic equations. On an open connected domain $E\subseteq \R^d$, given $\gamma>0$, $\beta: E \rightarrow \R^d$, and $\delta: E \rightarrow \R^{d\times \bar d}$, we consider the elliptic operator 
\begin{equation}\label{L}
	L^{(\gamma,\beta,\delta)} w:= -\gamma w + \beta\cdot D_x w +\frac{1}{2}\tr(\delta\delta' D^2_x w ),\quad \forall w\in C^{2}(E).
\end{equation}
The next lemma summarizes results from Theorem 6.13, Theorem 6.17, Problem 6.1 in \cite{MR1814364}. 

\begin{Lemma}\label{lemma.pde.solution}
	Fix $k\in\N_0$ and $\alpha\in (0,1)$. Let $E\subset\R^d$ be a bounded connected open domain satisfying the exterior sphere condition.\footnote{$E$ is said to satisfy the {\it exterior sphere condition} if for each $x\in \partial E$, there exist $y\in\R^d$ and $r>0$ such that $B_r(y)\subset \R^d\setminus \overline{E}$ and $\overline{B_r(y)}\cap \overline{E}=\{x\}$. In this paper, since all the domains we consider will be either the whole space $\R^d$ or small balls in $\R^d$, this condition always holds.} Suppose that $\beta: E \rightarrow \R^d$ and $\delta: E \rightarrow \R^{d\times \bar{d}}$ satisfy
	\begin{align}
		M_k := \|\beta\|_{\Cc^{k,\alpha}(E)}+\|\delta\|_{\Cc^{k,\alpha}(E)}  <\infty \label{eq.lm.bsigma}
	\end{align}
	and there exists $K>0$ such that 
	\begin{equation}\label{tr bdd}
		\tr(\delta(x)\delta(x)' y' y)\geq K|y|^2\quad  \forall x\in E\  \hbox{and}\  y\in \R^d. 
	\end{equation}
	Then, for any $\gamma>0$, $f\in \Cc^{k,\alpha}(E)$, and $\phi:\overline E\to \R$ that is continuous,  the Dirichlet problem
	\begin{align}
		L^{(\gamma,\beta,\delta)}w&=f\quad \hbox{in}\ \ E,\label{L=f}\\
		w&=\phi\quad  \hbox{on}\ \ \partial E,
	\end{align}
	with $L^{(\gamma,\beta,\delta)}$ as in \eqref{L}, admits a unique solution $w\in \mathcal{C}^{k+2,\alpha}(E)$. Moreover, for any $E'\subset\subset E$,
	\be\label{eq.u.ck}  
	\|w\|_{\Cc^{k+2,\alpha}(E')}\leq C_k (\|w\|_{\Cc^0(E)}+\|f\|_{\Cc^{k,\alpha}(E)}),
	\ee
	for some $C_k>0$ that depends on $k$, $M_{k}$, $\operatorname{dist}(E', \partial E)$, $\dim(E)$, $\gamma$, $\alpha$, $K$, $d$, and $\bar d$.
\end{Lemma}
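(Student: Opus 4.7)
The plan is to recognize Lemma~\ref{lemma.pde.solution} as essentially a repackaging of the classical Schauder theory for linear uniformly elliptic equations in the form developed in Chapter~6 of \cite{MR1814364}. All three assertions (existence, uniqueness, interior regularity with the quantitative estimate) will follow once the hypotheses needed to apply the Gilbarg--Trudinger machinery are verified.

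First, I would check that $L^{(\gamma,\beta,\delta)}$ fits the framework of a strictly elliptic operator with H\"older continuous coefficients and nonpositive zeroth-order term. The principal part has coefficient matrix $\frac{1}{2}\delta\delta'$, which by the bound \eqref{tr bdd} is uniformly elliptic with ellipticity constant $K/2$, and by \eqref{eq.lm.bsigma} these coefficients together with the drift $\beta$ are of class $\Cc^{k,\alpha}(E)$. The zeroth-order coefficient is $-\gamma<0$, which ensures the strong maximum principle applies and hence gives uniqueness of the Dirichlet problem directly.

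For existence of a solution $w\in \Cc^{2,\alpha}(E)\cap \Cc(\overline E)$ attaining the continuous boundary data $\phi$, I would apply Theorem 6.13 of \cite{MR1814364} (the Perron-type existence theorem under H\"older coefficient regularity): the only boundary regularity needed to realize continuous boundary values is the existence of local barriers at each boundary point, and this is exactly what the exterior sphere condition supplies (the standard distance-to-ball barrier construction). Higher interior regularity $w\in \Cc^{k+2,\alpha}(E)$ when $f\in \Cc^{k,\alpha}(E)$ follows by bootstrapping via Theorem 6.17: one differentiates the equation (interpreting differences at first, then passing to derivatives once they are known to exist) to obtain, on any slightly smaller subdomain, a new uniformly elliptic equation with $\Cc^{k-1,\alpha}$ coefficients and $\Cc^{k-1,\alpha}$ right-hand side, and iterates $k$ times.

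Finally, the quantitative estimate \eqref{eq.u.ck} is the interior Schauder estimate (Problem 6.1 in \cite{MR1814364}) applied in the same bootstrap: the $k=0$ case is the standard interior Schauder inequality bounding $\|w\|_{\Cc^{2,\alpha}(E')}$ by $\|w\|_{\Cc^0(E)}+\|f\|_{\Cc^{0,\alpha}(E)}$ with a constant depending on $M_0$, $\gamma$, $K$, $\alpha$, $d$, $\bar d$, $\dim(E)$ and $\operatorname{dist}(E',\partial E)$; the higher-$k$ version follows by covering $E'$ with finitely many nested subdomains and applying the $k=0$ estimate to the differentiated equations successively. There is no real obstacle here: the only nuance worth flagging is that all constants must be tracked to depend only on the quantities listed (and not on $w$ itself), which is standard in these formulations but must be read off carefully from \cite{MR1814364}. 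Since later parts of the paper exploit exactly how the constant depends on $\beta$ and $\delta$ through $M_k$, I would make sure the quoted statements preserve that dependence explicitly.
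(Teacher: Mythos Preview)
Your proposal is correct and matches the paper's approach exactly: the paper does not give an independent proof but simply cites Theorem~6.13, Theorem~6.17, and Problem~6.1 of \cite{MR1814364}, which is precisely the Schauder-theory package (existence via Perron with exterior-sphere barriers, higher interior regularity by bootstrap, and the interior Schauder estimate) that you outline. Your added remarks about tracking the constant's dependence on $M_k$ are apt and consistent with how the lemma is later used.
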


For any $\pi\in\Ac_M$ given by \eqref{pi=Gamma}, we recall \eqref{eq.hat.notation} and define the elliptic operator 
\be\label{eq.elliptic.gibbs} 
\Lc^\pi w := L^{(\rho,\hat b(\cdot,p(\cdot)),\sigma)} w = -\rho w +\hat{b}(\cdot,p(\cdot))\cdot D_x w+ \frac{1}{2}\tr(\sigma\sigma' D^2_x w),\quad \forall w\in C^2(\R^d).
\ee
%Relying on Lemma~\ref{lemma.pde.solution}, the regularity of $V^\pi$ and a PDE characterization can be established. 

\begin{Proposition}\label{prop.Vpi}
	Fix $\alpha\in(0,1)$ and let $\Lambda_{k+1}<\infty$ in \eqref{eq.parameters.m} for some $k\in\N_0$. Suppose that $\sigma$ fulfills the ellipticity condition, i.e., there exists $\eta_0>0$ such that
	\be\label{eq.assume.ellip}  
	\tr(\sigma(x) \sigma(x)'\xi'\xi)\ge \eta_0|\xi|^2\quad \forall \xi,x\in \R^d. 
	\ee
	For any $p:\R^d\rightarrow \R^d$ of the class $\Cc^{0,1}_{\text{unif}}(\R^d)\cap\Cc^{k,\alpha}_{\text{unif}}(\R^d)$, consider $\pi$ given by \eqref{pi=Gamma}. Then, we have  $\pi\in \Ac_M$,  $V^\pi\in\Cc^{k+2,\alpha}_{\text{unif}}(\R^d)$, and that $V^\pi$ satisfies the elliptic equation
	\be\label{eq.HJB.new.z}
	\begin{aligned}
		\Lc^\pi V^\pi(x)+\hat{r}(x,p(x))-\hat{\Hc}(x,p(x))=0\quad \forall x\in \R^d.
	\end{aligned}
	\ee
\end{Proposition}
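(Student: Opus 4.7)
The plan is to localize $V^\pi$ onto unit balls, where classical Schauder theory (Lemma~\ref{lemma.pde.solution}) produces a smooth solution to the expected elliptic PDE, and then identify this PDE solution with $V^\pi$ via It\^o's formula and the strong Markov property. First, since $p\in \Cc^{0,1}_{\text{unif}}(\R^d)$ and $\Lambda_{k+1}\geq\Lambda_1<\infty$, Proposition~\ref{prop.sde} delivers $\pi\in\Ac_M$ together with a bounded continuous $V^\pi$ on $\R^d$. Moreover, Lemma~\ref{lemma.lips.xy}(ii), applied with the hypotheses $\Lambda_{k+1}<\infty$ and $p\in \Cc^{k,\alpha}_{\text{unif}}(\R^d)$, yields $\hat b(\cdot,p(\cdot)),\ (\hat r-\hat\Hc)(\cdot,p(\cdot))\in \Cc^{k,\alpha}_{\text{unif}}(\R^d)$, so the coefficients of $\Lc^\pi$ and the forcing term satisfy the hypotheses of Lemma~\ref{lemma.pde.solution} with constants uniform over unit-ball domains.

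Next, I would fix an arbitrary $x_0\in\R^d$, set $E:=B_2(x_0)$ (which trivially satisfies the exterior sphere condition), and invoke Lemma~\ref{lemma.pde.solution} with $\gamma=\rho$, $\beta=\hat b(\cdot,p(\cdot))$, $\delta=\sigma$, $f=-(\hat r-\hat\Hc)(\cdot,p(\cdot))$, and continuous boundary data $\phi=V^\pi|_{\partial E}$. This gives a unique $w\in \Cc^{k+2,\alpha}(E)\cap\Cc^0(\overline E)$ solving $\Lc^\pi w+(\hat r-\hat\Hc)(\cdot,p(\cdot))=0$ on $E$ with $w=V^\pi$ on $\partial E$. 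The interior estimate \eqref{eq.u.ck} applied with $E'=B_1(x_0)$ yields
\begin{equation*}
\|w\|_{\Cc^{k+2,\alpha}(B_1(x_0))}\le C_k\bigl(\|V^\pi\|_{\Cc^0(\R^d)}+\|(\hat r-\hat\Hc)(\cdot,p(\cdot))\|_{\Cc^{k,\alpha}_{\text{unif}}(\R^d)}\bigr),
\end{equation*}
with $C_k$ depending only on $x_0$-independent quantities (since $\dim(B_2(x_0))=2$, $\operatorname{dist}(B_1(x_0),\partial B_2(x_0))=1$, the uniform norms of $\beta,\delta$, the ellipticity constant $\eta_0$, and $\rho,\alpha,k,d,\bar d$ are all intrinsic).

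To identify $w$ with $V^\pi$ on $E$, fix $y\in E$ and let $\tau$ be the first exit time of $X^\pi$ (starting from $y$) from $E$. By uniform ellipticity \eqref{eq.assume.ellip} and boundedness of $\hat b(\cdot,p(\cdot))$, $\tau$ has exponential moments, hence $\E_y[e^{-\rho\tau}]<1$ and $\E_y[\tau]<\infty$. Applying It\^o's formula to $s\mapsto e^{-\rho s}w(X^\pi_s)$ on the stopping times $\tau_n:=\tau\wedge\inf\{s:X^\pi_s\notin B_{2-1/n}(x_0)\}$, using that $w$ is $\Cc^{k+2,\alpha}$ on each $B_{2-1/n}(x_0)\subset\subset E$, and then letting $n\to\infty$ via continuity of $w$ on $\overline E$ and dominated convergence, we obtain
\begin{equation*}
w(y)=\E_y\bigg[e^{-\rho\tau}V^\pi(X^\pi_\tau)+\int_0^\tau e^{-\rho s}(\hat r-\hat\Hc)(X^\pi_s,p(X^\pi_s))\,ds\bigg].
\end{equation*}
On the other hand, the representation \eqref{V^pi=E}--\eqref{DELTA} for $V^\pi$ combined with the strong Markov property of the time-homogeneous diffusion $X^\pi$ produces the same identity with $V^\pi(y)$ on the left. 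Hence $w\equiv V^\pi$ on $E$; in particular $V^\pi$ satisfies \eqref{eq.HJB.new.z} on $E$, and, as $x_0\in\R^d$ was arbitrary, on all of $\R^d$. The uniform Schauder bound then yields $V^\pi\in \Cc^{k+2,\alpha}_{\text{unif}}(\R^d)$ upon taking the supremum over $x_0$.

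The main obstacle I anticipate is the identification step: one must be careful that the interior regularity of $w$ only permits direct application of It\^o's formula strictly inside $E$, so a localization through $\tau_n\uparrow\tau$ is needed, with the limit justified by the continuity of $w$ up to $\partial E$ (from the classical Dirichlet theory) and the boundedness of $V^\pi$ and of the integrand $(\hat r-\hat\Hc)(\cdot,p(\cdot))$ established in Proposition~\ref{prop.sde} and Lemma~\ref{lemma.lips.xy}(ii). Once the Dynkin identity is rigorously derived on $\overline E$, matching it with the strong Markov representation of $V^\pi$ is straightforward, and the uniformity in $x_0$ follows mechanically from the intrinsic character of the Schauder constants on unit balls.
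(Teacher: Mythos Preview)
Your proposal is correct and follows essentially the same route as the paper: localize to balls, invoke Lemma~\ref{lemma.pde.solution} for the Dirichlet problem with boundary data $V^\pi$, identify the solution with $V^\pi$ via a stochastic representation, and then take the supremum over centers to upgrade to $\Cc^{k+2,\alpha}_{\text{unif}}(\R^d)$. The only cosmetic difference is that the paper dispatches the identification step by citing \cite[Proposition 5.7.2]{KS-book-91} directly (with $\E[\tau_E]<\infty$ from ellipticity), whereas you spell out the It\^o-formula localization through $\tau_n\uparrow\tau$ by hand; both are valid and equivalent.
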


\begin{proof}
	By Proposition~\ref{prop.sde}, $\pi\in\Ac_M$ and $V^\pi$ is bounded continuous on $\R^d$. 
	For a bounded connected open domain $E\subset \R^d$ satisfying the exterior sphere condition, 
	consider the Dirichlet problem
	\begin{align}\label{Dirichlet L^pi}
		L^{(\rho,\hat b(\cdot,p(\cdot)),\sigma)} w +\hat{r}(\cdot,p(\cdot))-\hat{\Hc}(\cdot,p(\cdot))&=0\ \ \hbox{in}\ E,\qquad w=V^\pi\ \ \hbox{on}\ \partial E. 
	\end{align}
	In view of \eqref{eq.parameters.m}, $\Lambda_{k+1}<\infty$ directly gives 
	\begin{equation}\label{sigma C^k,alpha}
		\sigma\in \Cc^{k+1}(\R^d)\subset \Cc^{k,\alpha}(\R^d)\subset \Cc^{k,\alpha}_{\text{unif}}(\R^d). 
	\end{equation}
	Also, with $\Lambda_{k+1}<\infty$ and $p\in \Cc^{k,\alpha}_{\text{unif}}(\R^d)$, Lemma \ref{lemma.lips.xy} (ii) entails 
	\begin{equation}\label{b C^k,alpha}
		\hat b(\cdot,p(\cdot)),\ \hat{r}(\cdot,p(\cdot))-\hat{\Hc}(\cdot,p(\cdot))\in\Cc^{k,\alpha}_{\text{unif}}(\R^d). 
	\end{equation}
	As $E$ is a bounded domain, \eqref{sigma C^k,alpha} and \eqref{b C^k,alpha} readily imply that $\sigma$, $\hat b(\cdot,p(\cdot))$, and $\hat{r}(\cdot,p(\cdot))-\hat{\Hc}(\cdot,p(\cdot))$ all belong to $\Cc^{k,\alpha}(E)$. Such regularity, along with \eqref{eq.assume.ellip} and the continuity of $V^\pi$ on $\overline E$, allows us to apply Lemma \ref{lemma.pde.solution} to conclude that there exists a unique solution $w\in \Cc^{k+2,\alpha}(E)$ to \eqref{Dirichlet L^pi}. Now, we claim that $w=V^\pi$ on $E$. For any $x\in E$, let $X^\pi$ denote the unique strong solution to \eqref{eq.sde.new} with $X_0=x$ and consider the stopping time $\tau_E:=\inf\{t\ge 0: X^{\pi}_t\notin E\}$. As \eqref{eq.assume.ellip} ensures $\E[\tau_E]<\infty$ (see e.g., \cite[Lemma 5.7.4 and Remark 5.7.5]{KS-book-91}), \cite[Proposition 5.7.2]{KS-book-91} asserts that
	\[
	w(x) = \E\bigg[\int_0^{\tau_E} e^{-\rho t}\left(\hat{r}(X^\pi_t, p(X^\pi_t))-\hat{\Hc}(X^\pi_t, p(X^\pi_t))\right)dt + e^{-\rho \tau_E} V^\pi(X^\pi_{\tau})\bigg]=V^\pi(x),
	\]
	where the last equality follows from \eqref{V^pi=E} and the strong Markov property of $X^\pi$. Hence, $w=V^\pi$ on $E$. As the domain $E$ is arbitrarily picked, this implies that $V^\pi$ satisfies \eqref{eq.HJB.new.z}. 
	
	It remains to prove $V^\pi\in \Cc^{k+2,\alpha}_{\text{unif}}(\R^d)$, i.e., $\sup_{x\in \R^d}\|V^\pi\|_{\Cc^{k+2,\alpha}(B_{1}(x))}<\infty$. By taking $E'=B_{1}(x)$ and $E = B_2(x)$ in \eqref{eq.u.ck} and considering the supremum over all $x\in\R^d$, we get
	\begin{align}
		\sup_{x\in \R^d}\|V^\pi\|_{C^{k+2,\alpha}(B_{1}(x))} &\leq  \sup_{x\in \R^d} C_k(x) \left(\|V^\pi\|_{\Cc^0(B_2(x))}+\|\hat{r}(\cdot,p(\cdot))-\hat{\Hc}(\cdot,p(\cdot))\|_{\Cc^{k,\alpha}(B_2(x))}\right)\notag\\
		&\leq \bigg(\sup_{x\in \R^d} C_k(x)\bigg) \bigg(\|V^\pi\|_{\Cc^0(\R^d)}+2 \|\hat{r}(\cdot,p(\cdot))-\hat{\Hc}(\cdot,p(\cdot))\|_{\Cc^{k,\alpha}_{\text{unif}}(\R^d)}\bigg),\label{finite term}
	\end{align}
	where $C_k(x)>0$ is a nondecreasing function of $\|\hat b(\cdot,p(\cdot))\|_{\Cc^{k,\alpha}(B_{2}(x))}$ and $\|\sigma\|_{\Cc^{k,\alpha}(B_{2}(x))}$. Recall from \eqref{sigma C^k,alpha}-\eqref{b C^k,alpha} that $\sigma, \hat b(\cdot,p(\cdot))\in\Cc^{k,\alpha}_{\text{unif}}(\R^d)$, so that 
	$
	\sup_{x\in\R^d} \|f\|_{\Cc^{k,\alpha}(B_{2}(x))} \le 2 \|f\|_{\Cc^{k,\alpha}_{\text{unif}}(\R^d)}<\infty,\ \hbox{for}\ f= \sigma,\ \hat b(\cdot,p(\cdot)).  
	$ 
	Hence, $\sup_{x\in \R^d} C_k(x)<\infty$. This, along with $V^\pi$ being bounded and $\hat{r}(\cdot,p(\cdot))-\hat{\Hc}(\cdot,p(\cdot))\in \Cc^{k,\alpha}_{\text{unif}}(\R^d)$ (by \eqref{b C^k,alpha}), ensures that the right-hand side of \eqref{finite term} is finite, as desired. 
\end{proof}

\begin{Remark}
	Proposition~\ref{prop.Vpi} requires both ``$p\in \Cc^{0,1}_{\text{unif}}(\R^d)$'' and ``$p\in \Cc^{k,\alpha}_{\text{unif}}(\R^d)$''. 
	The former ensures that $\pi$ given by \eqref{pi=Gamma} is an admissible relaxed control (Proposition~\ref{prop.sde}); the latter allows Lemma~\ref{lemma.lips.xy} to be applied, so that classical results for elliptic equation (i.e., Lemma~\ref{lemma.pde.solution}) can be imported for the study of $V^\pi$. In reality, only one of the two requirements takes effect (as it covers the other): when $k=0$ (resp.\ $k\ge1$), ``$p\in \Cc^{0,1}_{\text{unif}}(\R^d)$'' (resp.\ ``$p\in \Cc^{1,\alpha}_{\text{unif}}(\R^d)$'') takes effect. 
\end{Remark}

When put in the context of policy iteration, Proposition~\ref{prop.Vpi} reveals how the regularity of $v^n$ in \eqref{PIA} evolves. %as $n\to\infty$. 
Simply put, the PIA upgrades the regularity of $v^n$ up to the regularity of $b$, $\sigma,$ and $r$.

\begin{Corollary}\label{prop.vn.bound}
	Let $\Lambda_{\bar k}<\infty$ in \eqref{eq.parameters.m} for some $\bar k\in \N$ and assume \eqref{eq.assume.ellip}. 
	Then, in the PIA \eqref{PIA}, $\pi^n\in\Ac_M$ for all $n\in\N$ and the sequence $\{v^n\}_{n\geq 0}$ satisfy the following for all $0<\alpha<1$:
	\begin{align*}
		v^n\in \Cc^{n+1,\alpha}_{\text{unif}}(\R^d),\quad \text{for}\ \ 1\leq n\leq \bar k; \qquad v^n\in \Cc^{\bar k+1,\alpha}_{\text{unif}}(\R^d),\quad \text{for}\ \ n\ge\bar k+1.  
	\end{align*} 
	Moreover, for each $n\in\N$, 
	\begin{equation}\label{v^n-1 to v^n}
		\Lc^{\pi^n} v^n(x)+\hat{r}(x, D_x v^{n-1}(x))-\hat{\Hc}(x, D_x v^{n-1}(x))=0\quad \forall x\in\R^d.
	\end{equation}
\end{Corollary}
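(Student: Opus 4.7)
The plan is to prove the corollary by induction on $n$, using Proposition~\ref{prop.Vpi} as the engine that promotes the regularity of $v^n$ by exactly one order per iteration, until the ceiling set by the regularity of the model primitives ($b$, $\sigma$, $r$) is reached. First I would treat the base case: $v^0\in\Cc^{1,1}_{\text{unif}}(\R^d)\subset\Cc^{1,\alpha}_{\text{unif}}(\R^d)$ by the initialization of the PIA in \eqref{PIA}, so that $p:=D_x v^0\in\Cc^{0,1}_{\text{unif}}(\R^d)\cap\Cc^{0,\alpha}_{\text{unif}}(\R^d)$. Because $\bar k\ge 1$ forces $\Lambda_1\le\Lambda_{\bar k}<\infty$, Proposition~\ref{prop.Vpi} with $k=0$ applies and yields $\pi^1\in\Ac_M$, $v^1=V^{\pi^1}\in\Cc^{2,\alpha}_{\text{unif}}(\R^d)$, and the PDE \eqref{v^n-1 to v^n} as a direct instance of \eqref{eq.HJB.new.z}.

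For the inductive step in the range $2\le n\le\bar k$, I would assume $v^{n-1}\in\Cc^{n,\alpha}_{\text{unif}}(\R^d)$, so that $p:=D_x v^{n-1}\in\Cc^{n-1,\alpha}_{\text{unif}}(\R^d)$. A line-segment argument of the same kind used just above \eqref{eq.sde.new.z} in the proof of Proposition~\ref{prop.sde} shows that any function in $\Cc^{1,\alpha}_{\text{unif}}(\R^d)$ is globally Lipschitz, so $p$ also lies in $\Cc^{0,1}_{\text{unif}}(\R^d)$. Since $n\le\bar k$ guarantees $\Lambda_n\le\Lambda_{\bar k}<\infty$, Proposition~\ref{prop.Vpi} with $k=n-1$ is applicable and delivers $\pi^n\in\Ac_M$, $v^n\in\Cc^{n+1,\alpha}_{\text{unif}}(\R^d)$, and the asserted PDE for $v^n$.

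For $n\ge\bar k+1$, the regularity must saturate at $\Cc^{\bar k+1,\alpha}_{\text{unif}}(\R^d)$ because we have no control over $\Lambda_{\bar k+1}$. I would continue the induction with a shifted hypothesis: if $v^{n-1}\in\Cc^{\bar k+1,\alpha}_{\text{unif}}(\R^d)$ (true at $n=\bar k+1$ by the previous paragraph), then $D_x v^{n-1}\in\Cc^{\bar k,\alpha}_{\text{unif}}(\R^d)\subset\Cc^{\bar k-1,\alpha}_{\text{unif}}(\R^d)\cap\Cc^{0,1}_{\text{unif}}(\R^d)$, so Proposition~\ref{prop.Vpi} applied with $k=\bar k-1$ (only requiring $\Lambda_{\bar k}<\infty$, which is available) produces $\pi^n\in\Ac_M$, $v^n\in\Cc^{\bar k+1,\alpha}_{\text{unif}}(\R^d)$, and the PDE \eqref{v^n-1 to v^n}. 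This closes the induction.

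I do not expect a serious obstacle; the argument is essentially a Schauder bootstrap driven by Proposition~\ref{prop.Vpi}. The only modestly delicate points are to verify the two hypotheses of Proposition~\ref{prop.Vpi} simultaneously, namely $p\in\Cc^{0,1}_{\text{unif}}(\R^d)$ \emph{and} $p\in\Cc^{k,\alpha}_{\text{unif}}(\R^d)$, at every iteration, and to observe that once $v^{n-1}$ is smooth enough to accommodate the largest admissible index $k=\bar k-1$, no further upgrade is possible because the finite ceiling $\Lambda_{\bar k}$ on the model primitives precludes the application of Proposition~\ref{prop.Vpi} with any larger $k$.
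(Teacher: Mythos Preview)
Your proposal is correct and follows essentially the same bootstrap argument as the paper's proof: both proceed by induction, applying Proposition~\ref{prop.Vpi} with $p=D_x v^{n-1}$ and $k=n-1$ for $1\le n\le \bar k$, then with $k=\bar k-1$ for $n\ge \bar k+1$. Your version is slightly more explicit in checking the $\Cc^{0,1}_{\text{unif}}$ hypothesis at each step (via the line-segment argument) and in noting the monotonicity $\Lambda_n\le\Lambda_{\bar k}$, points the paper leaves implicit (cf.\ Remark~3.1).
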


\begin{proof}
	As $v^0\in \Cc^{1,1}_{\text{unif}}(\R^d)$, we have $D_x v^0\in\Cc^{0,1}_{\text{unif}}(\R^d)$. By taking $p = D_x v^0$ and $k=0$ in Proposition \ref{prop.Vpi}, we have $\pi^1\in\Ac_M$, $v^1\in \Cc^{2,\alpha}_{\text{unif}}(\R^d)$, and that \eqref{v^n-1 to v^n} holds for $n=1$. Similarly, by taking $p = D_x v^1\in\Cc^{1,\alpha}_{\text{unif}}(\R^d)$ and $k=1$ in Proposition \ref{prop.Vpi}, we have $v^2\in\Ac_M$, $v^2\in \Cc^{3,\alpha}_{\text{unif}}(\R^d)$, and that \eqref{v^n-1 to v^n} holds for $n=2$. Under $\Lambda_{\bar k}<\infty$, this procedure (i.e., taking $p = D_x v^{n-1}$ and $k=n-1$ in Proposition \ref{prop.Vpi}) can be carried out for all $n=1,2,\ldots,\bar k$, which shows that $v^n\in\Ac_M$, $v^n\in \Cc^{n+1,\alpha}_{\text{unif}}(\R^d)$ and \eqref{v^n-1 to v^n} holds for all $n=1,2,\ldots,\bar k$. Now, by taking $p = D_x v^{\bar k}\in \Cc^{\bar k,\alpha}_{\text{unif}}(\R^d)\subset \Cc^{\bar k-1,\alpha}_{\text{unif}}(\R^d)$ and $k=\bar k-1$ in Proposition \ref{prop.Vpi}, we have $\pi^{\bar k+1}\in\Ac_M$,  $v^{\bar k+1}\in \Cc^{\bar k+1,\alpha}_{\text{unif}}(\R^d)$, and that \eqref{v^n-1 to v^n} holds for $n=\bar k+1$. By repeating the same procedure (i.e., taking $p = D_x v^{n-1}$ and $k=\bar k-1$ in Proposition \ref{prop.Vpi}) for all $n> \bar k+1$, we see that $v^n\in\Ac_M$, $v^{n}\in \Cc^{\bar k+1,\alpha}_{\text{unif}}(\R^d)$, and \eqref{v^n-1 to v^n} holds for all $n>\bar k+1$. 
\end{proof}

%%%%%%%%%%%%%%%%%%%%%%%%%%%
%%%%%%%%%%%%%%%%%%%%%%%%%%%
%%%%%%%%%%%%%%%%%%%%%%%%%%%

\section{Policy Improvement and the Convergence of PIA}\label{sec.pia.convergence} 
The goal of this section is to establish the convergence of the PIA \eqref{PIA} to the optimal value $V^*$ in \eqref{V^*}. First, we will show that the algorithm does improve a policy recursively, i.e., $\{v^n\}_{n\in\N}$ in \eqref{PIA} satisfies $v^{n+1}\ge v^n$, and the limiting value $v^*$ is well-defined; see Proposition~\ref{prop.policy.improvement} and Corollary~\ref{coro.PIA.works}. Proving $v^*=V^*$ is much more challenging, due to the added entropy term in \eqref{V^pi}. To overcome the technical hurdles, we propose a plan (the ``grand plan'' in Section~\ref{subsec:grand plan}) to find a uniform estimate of $\{v^n\}_{n\in\N}$. The plan's implementation requires new Sobolev estimates designed specifically for the purpose of policy iteration (Section~\ref{subsec:Sobolev}) and a nontrivial containment of the entropy growth (Section~\ref{subsec:entropy}). Based on this, Proposition~\ref{thm.the.uniformbound} gives a uniform bound for $\{v^n\}_{n\in\N}$ and their derivatives, which allows us to prove $v^*=V^*$ and obtain an optimal relaxed control; see Theorem \ref{thm.verification.limit}.

First, under mild regularity conditions, we find that for any relaxed control $\pi$ given by \eqref{pi=Gamma}, 
\be\label{pit}
\tilde{\pi}(x,u):=\Gamma(x,D_x V^\pi(x),u),\quad \forall (x,u)\in\R^d\times U,
\ee
is again a well-defined relaxed control and it outperforms $\pi$, i.e., $V^{\pit}\ge V^\pi$. 

\begin{Proposition}\label{prop.policy.improvement}
	Let $\Lambda_1<\infty$ in \eqref{eq.parameters.m} and assume \eqref{eq.assume.ellip}. For any $p:\R^d\rightarrow \R^d$ of the class $\Cc^{0,1}_{\text{unif}}(\R^d)$, consider $\pi$ given by \eqref{pi=Gamma} and recall that $\pi\in \Ac_M$ (Proposition~\ref{prop.sde}) and $V^\pi\in \Cc^{2,\alpha}_{\text{unif}}(\R^d)$ for all $0<\alpha<1$ (Proposition~\ref{prop.Vpi}). Then, $\pit$ given by \eqref{pit} is well-defined and satisfies $\tilde\pi\in\Ac_M$, $V^{\tilde\pi}\in\Cc^{2,\alpha}_{\text{unif}}(\R^d)$ for all $0<\alpha<1$, and $V^{\tilde{\pi}}(x)\geq V^\pi(x)$ for all $x\in \R^d$.
\end{Proposition}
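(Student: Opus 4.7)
The first step is to check that $\tilde\pi$ is a well-defined admissible Markov relaxed control and that $V^{\tilde\pi}$ has the claimed regularity. Since $V^\pi\in\Cc^{2,\alpha}_{\text{unif}}(\R^d)$, its gradient lies in $\Cc^{1,\alpha}_{\text{unif}}(\R^d)\subset\Cc^{0,1}_{\text{unif}}(\R^d)$. Taking $D_x V^\pi$ in the role of ``$p$'', Proposition~\ref{prop.sde} yields $\tilde\pi\in\Ac_M$, while Proposition~\ref{prop.Vpi} applied with $k=0$ gives $V^{\tilde\pi}\in\Cc^{2,\alpha}_{\text{unif}}(\R^d)$ for every $\alpha\in(0,1)$, together with the elliptic identity
\[
\Lc^{\tilde\pi} V^{\tilde\pi}(x) + \hat r(x,D_x V^\pi(x)) - \hat\Hc(x,D_x V^\pi(x))=0,\qquad x\in\R^d.
\]

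Next I would exploit the variational characterization of $\Gamma$. By \eqref{eq.gibbs.pi}, for each fixed $x$ the density $u\mapsto \tilde\pi(x,u)=\Gamma(x,D_x V^\pi(x),u)$ is the $\Pc(U)$-argmax of $\varpi\mapsto \int_U\bigl(b(x,u)\cdot D_x V^\pi(x)+r(x,u)-\lambda\ln\varpi(u)\bigr)\varpi(u)du$. Comparing the value at $\tilde\pi(x,\cdot)$ against the value at $\pi(x,\cdot)=\Gamma(x,p(x),\cdot)$ and rewriting both sides with the $\hat{\,\cdot\,}$-notation \eqref{eq.hat.notation}, I obtain the pointwise inequality
\[
\hat b(x,D_x V^\pi)\cdot D_x V^\pi + \hat r(x,D_x V^\pi) - \hat\Hc(x,D_x V^\pi)\ \ge\ \hat b(x,p(x))\cdot D_x V^\pi + \hat r(x,p(x)) - \hat\Hc(x,p(x)).
\]
Substituting the PDE \eqref{eq.HJB.new.z} for $V^\pi$ shows the right-hand side equals $\rho V^\pi(x)-\tfrac12\tr(\sigma\sigma' D^2_x V^\pi)(x)$; combining this with the displayed equation for $V^{\tilde\pi}$ above then produces the pointwise differential inequality
\[
\Lc^{\tilde\pi}\bigl(V^{\tilde\pi}-V^\pi\bigr)(x)\ \le\ 0,\qquad x\in\R^d.
\]

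The last step is a standard It\^o comparison. Set $w:=V^{\tilde\pi}-V^\pi$. Since $V^\pi,V^{\tilde\pi}\in\Cc^{2,\alpha}_{\text{unif}}(\R^d)$, both $w$ and $D_x w$ are bounded on $\R^d$; together with the boundedness of $\sigma$ (from $\Lambda_1<\infty$), this turns the stochastic-integral term arising from applying It\^o's formula to $e^{-\rho t}w(X^{\tilde\pi}_t)$ into a true martingale. Taking expectations and using $\Lc^{\tilde\pi} w\le 0$ gives $w(x)\ge \E_x[e^{-\rho T}w(X^{\tilde\pi}_T)]$ for every $T\ge 0$, and letting $T\to\infty$ yields $w\ge 0$, i.e., $V^{\tilde\pi}\ge V^\pi$ on $\R^d$, as required. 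The only delicate point in the entire argument is the translation of the pointwise Gibbs-form optimality of $\tilde\pi$ into the differential inequality $\Lc^{\tilde\pi}(V^{\tilde\pi}-V^\pi)\le 0$; all remaining issues are taken care of by the regularity machinery already developed in Section~\ref{sec:v.gibbs}.
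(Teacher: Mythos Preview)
Your proof is correct and follows essentially the same route as the paper's: both combine the PDE characterizations of $V^\pi$ and $V^{\tilde\pi}$ from Proposition~\ref{prop.Vpi} with the Gibbs-form optimality \eqref{eq.gibbs.pi} and an It\^o argument under the $\tilde\pi$-dynamics. The only organizational difference is that you first isolate the pointwise inequality $\Lc^{\tilde\pi}(V^{\tilde\pi}-V^\pi)\le 0$ and then apply It\^o to $e^{-\rho t}w(X^{\tilde\pi}_t)$, whereas the paper applies It\^o to $e^{-\rho t}V^\pi(X^{\tilde\pi}_t)$, uses the Feynman--Kac representation \eqref{V^pi=E} for $V^{\tilde\pi}$, and performs the Gibbs comparison inside the resulting integrand; these are equivalent rearrangements of the same computation.
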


\begin{proof}
	With $D_x V^\pi\in \Cc^{1,\alpha}_{\text{unif}}(\R^d)\in \Cc^{0,1}_{\text{unif}}(\R^d)$, Proposition~\ref{prop.sde} (with $p=D_x V^\pi$) asserts $\tilde{\pi}\in \Ac_M$. Applying Proposition \ref{prop.Vpi} (with $k=0$ and $p=D_x V^\pi$) then yields $V^{\tilde{\pi}}\in \Cc^{2,\alpha}_{\text{unif}}(\R^d)$ for all $0<\alpha<1$. For any $x\in\R^d$, by applying It\^{o}'s formula to $e^{-\rho t} V^\pi(X^{\tilde{\pi}}_t)$ and recalling \eqref{eq.elliptic.gibbs}, we get
	\bee
	\begin{aligned}
		V^\pi(x) &= e^{-\rho T}V^{\pi}(X^{\pit}_T)-\int_0^T e^{-\rho t}\Lc^{\pit}V^\pi(X^{\pit}_t)dt-\int_0^T e^{-\rho t} D_xV^{\pi}(X^{\pit}_t)\cdot \sigma(X^{\pit}_t) dW_t,\quad \forall T>0.
	\end{aligned}
	\eee
	As $b,\sigma, V^\pi, D_xV^{\pi}$, and $D^2_{x}V^{\pi}$ are bounded (by $\Lambda_1<\infty$ and $V^\pi\in \Cc^2(\R^d)$), when we take expectation on both sides of the above equality and let $T\to\infty$, the dominated convergence theorem gives $V^\pi(x)= -\E\left[\int_0^\infty e^{-\rho t} \Lc^{\pit}V^\pi(X^{\pit}_t) dt\right]$. This, along with $V^{\pit}(x) = \E[\Delta(X^{\pit}_\cdot)]$ as in \eqref{V^pi=E}, yields
	\be \label{simi =} 
	\begin{aligned}
		V^{\tilde{\pi}}(x)-V^\pi(x) 
		&=\E\bigg[\int_0^\infty e^{-\rho t}\Big( \Lc^{\pit}V^\pi(X^{\pit}_t)+\hat{r}(X^{\pit}_t, D_x V^\pi(X^{\pit}_t)) 
		-\hat{\Hc}(X^{\pit}_t, D_x V^\pi(X^{\pit}_t)) \Big) dt\bigg] \\
		&=\E\bigg[\int_0^\infty e^{-\rho t}\bigg\{-\rho V^\pi(X^{\pit}_t)+\frac{1}{2}\tr(\sigma\sigma' D^2_x V^\pi(X^{\pit}_t))\\
		&\quad + \int_U\Big( b(u, X^{\pit}_t)\cdot D_x V^\pi(X^{\pit}_t)+r(X^{\pit}_t, u)-\lambda \ln\pit(X^{\pit}_t,u)  \Big) \pit(X^{\pit}_t,u) du\bigg\} dt\bigg]
	\end{aligned}
\ee
	where the second equality above follows from \eqref{eq.elliptic.gibbs}, \eqref{eq.hat.notation}, and \eqref{eq.h}. By the definition of $\tilde{\pi}(x,u) =\Gamma(x,D_x V^\pi(x),u)$ in \eqref{eq.gibbs.pi}, the integral in the last line of \eqref{simi =} is larger or equal to the same integral with $\tilde\pi(X^{\pit}_t,u)$ therein replaced by $\pi(X^{\pit}_t,u)$. Hence, with $\pi(X^{\pit}_t,u)$ in place of $\pit(X^{\pit}_t,u)$ in the last line of \eqref{simi =}, a calculation similar to that in \eqref{simi =} gives
	\bee
	\begin{aligned}
		V^{\tilde{\pi}}&(x)-V^\pi(x)
		\ge \E\bigg[\int_0^\infty e^{-\rho t}\Big( \Lc^\pi V^\pi(X^{\pit}_t)+\hat{r}(X^{\pit}_t, p(X^{\pit}_t))-\hat{\Hc}(X^{\pit}_t, p(X^{\pit}_t))\Big) dt\bigg]=0,
	\end{aligned}
	\eee
	where the last equality holds because $V^\pi$ satisfies \eqref{eq.HJB.new.z} (by Proposition~\ref{prop.Vpi}). 
\end{proof}

\begin{Corollary}\label{coro.PIA.works}
	Let $\Lambda_1<\infty$ in \eqref{eq.parameters.m} and assume \eqref{eq.assume.ellip}. Then, in the PIA \eqref{PIA}, we have $\pi^n\in\Ac_M$ and $v^{n+1}\ge v^{n}$ for all $n\in\N$. 
	Hence, 
	\begin{equation}\label{v^*}
		v^*(x):= \lim_{n\rightarrow \infty} v^n(x)\quad \forall x\in\R^d
	\end{equation}
	is well-defined (i.e., exists and is finitely-valued). 
\end{Corollary}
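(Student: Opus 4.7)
The plan is a short induction on $n$ driven by Proposition \ref{prop.policy.improvement}, followed by a monotone-convergence argument. All the substantive analysis, namely monotonicity under one step of the PIA together with the $\Cc^{2,\alpha}_{\text{unif}}$-regularity of the updated value function, has already been packaged into Propositions \ref{prop.sde}, \ref{prop.Vpi}, and \ref{prop.policy.improvement}; the task here is essentially to feed these results back into themselves and to read off the existence of the limit from Lemma \ref{lem:V^pi bdd}.

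For the base case, $v^0\in\Cc^{1,1}_{\text{unif}}(\R^d)$ forces $p^0:=D_x v^0\in\Cc^{0,1}_{\text{unif}}(\R^d)$, so Proposition \ref{prop.sde} (applied to $\pi^1(x,u)=\Gamma(x,p^0(x),u)$) gives $\pi^1\in\Ac_M$, and Proposition \ref{prop.Vpi} with $k=0$ (using the standing hypotheses $\Lambda_1<\infty$ and \eqref{eq.assume.ellip}) yields $v^1=V^{\pi^1}\in\Cc^{2,\alpha}_{\text{unif}}(\R^d)$ for every $\alpha\in(0,1)$. For the inductive step, I would assume for some $n\ge 1$ that $\pi^n\in\Ac_M$ has the form \eqref{pi=Gamma} with $p=D_x v^{n-1}\in\Cc^{0,1}_{\text{unif}}(\R^d)$ and that $v^n\in\Cc^{2,\alpha}_{\text{unif}}(\R^d)$. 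Then $D_x v^n\in\Cc^{1,\alpha}_{\text{unif}}(\R^d)\subset\Cc^{0,1}_{\text{unif}}(\R^d)$ (uniformly locally Lipschitz functions are globally Lipschitz via the chaining argument already used in the proof of Proposition \ref{prop.sde}), so $\pi^{n+1}(x,u)=\Gamma(x,D_x v^n(x),u)$ coincides with the $\tilde\pi$ associated to $\pi^n$ in \eqref{pit}; Proposition \ref{prop.policy.improvement} then delivers $\pi^{n+1}\in\Ac_M$, $v^{n+1}=V^{\pi^{n+1}}\in\Cc^{2,\alpha}_{\text{unif}}(\R^d)$, and the key monotonicity $v^{n+1}\ge v^n$ on $\R^d$, closing the induction.

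For the limit, fix $x\in\R^d$: the sequence $\{v^n(x)\}_{n\ge 1}$ is non-decreasing by the previous step, and since each $\pi^n\in\Ac$, Lemma \ref{lem:V^pi bdd} yields the $n$- and $x$-independent upper bound $v^n(x)\le \rho^{-1}\bigl(\|r\|_{\Cc^0(\R^d)}+\lambda|\ln\Leb(U)|\bigr)<\infty$; the monotone convergence theorem for real sequences then makes $v^*(x):=\lim_{n\to\infty}v^n(x)$ exist and be finite for every $x\in\R^d$. There is no genuine obstacle in this corollary; the only point one must not overlook is the inclusion $\Cc^{1,\alpha}_{\text{unif}}\subset\Cc^{0,1}_{\text{unif}}$, which is what regenerates the $\Cc^{0,1}_{\text{unif}}$-hypothesis of Proposition \ref{prop.policy.improvement} at each iteration. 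The real analytical difficulty lies downstream, in proving $v^*=V^*$, which the authors flag as requiring the ``grand plan'' of Section \ref{subsec:grand plan}.
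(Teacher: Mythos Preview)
Your proposal is correct and follows essentially the same approach as the paper's proof. The only cosmetic difference is that the paper cites Corollary~\ref{prop.vn.bound} (with $\bar k=1$) to obtain $\pi^n\in\Ac_M$ and $v^n\in\Cc^{2,\alpha}_{\text{unif}}(\R^d)$ in one stroke, whereas you unpack that corollary and run the induction directly via Propositions~\ref{prop.sde}, \ref{prop.Vpi}, and \ref{prop.policy.improvement}; the monotone-plus-bounded argument using Lemma~\ref{lem:V^pi bdd} is identical.
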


\begin{proof}
	For each $n\in\N$, by Corollary~\ref{prop.vn.bound} (with $\bar k=1$), we have $\pi^n\in\Ac_M$ and $v^n\in \Cc^{2,\alpha}_{\text{unif}}(\R^d)$ for all $0<\alpha<1$. It then follows from Proposition~\ref{prop.policy.improvement} that $v^{n+1} \ge v^n$. As the sequence $\{v^n\}_{n\in\N}$ is monotone and has a uniform upper bound (by Lemma~\ref{lem:V^pi bdd}), $v^*$ in \eqref{v^*} is well-defined. 
\end{proof}

Corollary~\ref{coro.PIA.works} shows that, as long as $\Lambda_1<\infty$ and \eqref{eq.assume.ellip} holds, the PIA \eqref{PIA} functions properly: it gives rise to a better relaxed control (which generates a larger value) in each iteration, and ultimately points to the largest possible value $v^*$ that cannot be further improved by the algorithm.  
It is then natural %mathematically, and imperative in practice, 
to ask whether $v^*$ coincides with the optimal value $V^*$ in \eqref{V^*}. %This will be investigated in detail throughout the rest of the section. 

%%%%%%%%%%%%%%%%%%%%%%%

\subsection{The Grand Plan}\label{subsec:grand plan}
To show that the sequence $\{v^n\}_{n\in\N}$ in the PIA \eqref{PIA} will ultimately lead to the optimal value (i.e., $v^*=V^*$), we will focus on proving 
\be\label{eq.thm.uniform} 
\sup_{n\in\N} \| v^n\|_{\Cc^2(\R^d)} <\infty.
\ee	
Once this is established (Proposition~\ref{thm.the.uniformbound}), the Arzela-Ascoli theorem and a modified verification argument will ensure $v^*=V^*$ and provide an optimal relaxed control (Theorem~\ref{thm.verification.limit}).   

%Entropy regularization substantially complicates the derivation of \eqref{eq.thm.uniform}. 
While $\| v^n\|_{\Cc^0(\R^d)}$ is known to be uniformly bounded (by Lemma~\ref{lem:V^pi bdd} and Corollary~\ref{coro.PIA.works}), it is difficult to estimate $\|D_x v^n\|_{\Cc^1(\R^d)} = \|D_x v^n\|_{\Cc^0(\R^d)} + \|D_x^2 v^n\|_{\Cc^0(\R^d)}$, due to the added entropy term in \eqref{V^pi}. To circumvent the involved technical challenges, we propose the following plan. 

Take $s := \lfloor d/2\rfloor+1$. For $n\in\N$ large enough, we perform the following estimations in order:
\begin{itemize}
	\item [1.]  For each $x\in\R^d$, bound $|D_x v^n(x)| + |D_x^2 v^n(x)|$ by $\|v^n\|_{W^{s+2,2}(B_1(x))}$ (Lemma~\ref{lm.infinity.emded}). 
	\item [2.] Bound $\|v^n\|_{W^{s+2,2}(B_1(x))}$ by a power function of the sum of 
	\begin{equation}\label{L^q terms}
		\|v^{n-i}\|_{L^{q}(B_\eta(x))},\ i=0,1,\ldots,s-1,\quad \hbox{and}\quad \|\hat{\Hc}(\cdot,D_x v^{n-s-1}(\cdot))\|_{L^q(B_\eta(x))}
	\end{equation}
	for some $q\ge 1$ and $\eta>0$ (see \eqref{eq.vn.h0norm0} and \eqref{eq.vn.h0norm0'''}, which reply on Lemmas \ref{lm.hnorm.estimate} and  \ref{lm.fn.vn-1}). 
	\item [3.] Bound $|\hat{\Hc}(\cdot,D_x v^{n-s-1}(\cdot))|$ by $\psi(|D_x v^{n-s-1}(\cdot)|)$, for a suitable function $\psi$ of {\it logarithmic} growth (Lemma~\ref{lm.1} and Corollary~\ref{coro:H bdd}). 
	\item [4.] Combine the local estimates in Steps 1 and 2. By Step 3 and $\sup_{N\in\N} \| v^n\|_{\Cc^0(\R^d)}<\infty$, turn the local estimate of $|D_x v^n(x)| + |D_x^2 v^n(x)|$ into a global one (i.e., independent of $x\in\R^d$), thereby obtaining a uniform bound for $\|D_x v^n\|_{\Cc^1(\R^d)}$. 
\end{itemize} 

To properly appreciate the above procedure, we point out two important observations.

\begin{Remark}\label{rem:sublinear important}
	The added entropy term in \eqref{V^pi} (i.e., $\hat{\Hc}(\cdot,D_x v^{n-s-1}(\cdot))$ therein) shows up in Step 2. It is critical that it has logarithmic growth in $|D_x v^{n-s-1}(\cdot)|$, as stated in Step 3. Since any power of a logarithmic function remains sublinear, by plugging the estimate in Step 3 into the power function of Step 2, we can see $\|v^n\|_{\Cc^1(\R^d)}$ dominated by $\|v^{n-s-1}\|_{\Cc^1(\R^d)}$. This readily suggests \eqref{eq.thm.uniform}, and is made rigorous in the proof of Proposition~\ref{thm.the.uniformbound}; see \eqref{eq.thm.key} and the arguments below it. 
\end{Remark}

\begin{Remark}\label{rem:detour}
	At first glance, Steps 1--4 above seem a puzzling ``detour'': to obtain a uniform bound for $\|D_x v^n\|_{\Cc^1(\R^d)} = \|D_x v^n\|_{\Cc^0(\R^d)}+\|D^2_x v^n\|_{\Cc^0(\R^d)}$,  
	we look for a local $W^{s+2,2}$ bound, transform it into a local $L^q$ bound, and finally switch back to the original $\Cc^1(\R^d)$ space. It is natural to wonder if one can simply work within $\Cc^1(\R^d)$ and derive the desired uniform bound 
	by the standard Schauder estimates for elliptic equations. As we will explain subsequently, such a detour may be necessary, precisely because of the presence of entropy regularization; see Remark \ref{rem:detour necessary} 
	for details. 
\end{Remark}

To begin with Steps 1 and 2 of our plan, we will derive some useful estimates in Sobolev spaces. 

%%%%%%%%%%%%%%%%%%%%%%%%%%%%%%%%%%

\subsection{Sobolev Estimates}\label{subsec:Sobolev}
A standard result in Sobolev spaces (see e.g., \cite[Exercise 18, page 292]{Evans-book-98}) states that for any $w\in W^{s,2}(\R^d)$ with $s>d/2$, $w$ is bounded and satisfies  
\begin{equation}\label{Evans}
	\|w\|_{L^\infty(\R^d)} \le C \|w\|_{W^{s,2}(\R^d)}, 
\end{equation}
where $C>0$ is a constant depending on only $s$ and $d$. For the case where the derivatives of $w$ do not necessarily lie in $L^2(\R^d)$, we can generalize \eqref{Evans} to the following localized version.

\begin{Lemma}\label{lm.infinity.emded}
	For any $w\in \Cc^s(\R^d)$ with $s>d/2$, there exists $C>0$ depending on only $s$ and $d$ such that 
	$
	|w(x)|\leq C \|w\|_{W^{s,2}(B_1(x))}$ for all $x\in \R^d$.
\end{Lemma}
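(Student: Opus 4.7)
The plan is to reduce the localized estimate to the global Sobolev embedding \eqref{Evans} via a translation-invariant cutoff argument. The issue is that \eqref{Evans} is stated for functions in $W^{s,2}(\R^d)$, whereas here the derivatives of $w$ are only controlled on $B_1(x)$. I need to cut $w$ off near $x$ without losing the $x$-independence of the constant.

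First I would fix, once and for all, a single cutoff $\varphi_0\in C_c^\infty(B_1(0))$ with $\varphi_0\equiv 1$ on $B_{1/2}(0)$; this choice depends only on $d$, so $\|\varphi_0\|_{\Cc^s(\R^d)}$ is a fixed constant depending only on $s$ and $d$. For each $x\in\R^d$, I translate to obtain $\varphi(y):=\varphi_0(y-x)$, which is smooth, compactly supported in $B_1(x)$, satisfies $\varphi(x)=1$, and has $\Cc^s$ norm independent of $x$.

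Next I would verify that $\varphi w$, extended by zero outside $B_1(x)$, lies in $W^{s,2}(\R^d)$. Since $w$ is $s$-times continuously differentiable on the compact set $\overline{B_1(x)}$ and $\varphi$ vanishes in a neighbourhood of $\partial B_1(x)$, the product and its classical derivatives up to order $s$ are bounded on $\overline{B_1(x)}$ and vanish elsewhere; they are therefore square-integrable on $\R^d$. Applying \eqref{Evans} to $\varphi w$ and using $\varphi(x)=1$ yields
\[
|w(x)|=|(\varphi w)(x)|\le \|\varphi w\|_{L^\infty(\R^d)}\le C_0(s,d)\,\|\varphi w\|_{W^{s,2}(\R^d)}.
\]

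Finally, to bound $\|\varphi w\|_{W^{s,2}(\R^d)}$ in terms of $\|w\|_{W^{s,2}(B_1(x))}$, I would expand each $D^a(\varphi w)$ with $|a|_{l_1}\le s$ by the Leibniz rule; since $\operatorname{supp}(\varphi)\subset B_1(x)$ and the $\Cc^s$ norm of $\varphi$ is controlled by that of $\varphi_0$, every summand is pointwise dominated, on $B_1(x)$, by a constant depending only on $s,d$ times $|D^{a-b}w|$. Taking $L^2$ norms and summing over $a$ gives $\|\varphi w\|_{W^{s,2}(\R^d)}\le C_1(s,d)\,\|w\|_{W^{s,2}(B_1(x))}$, and combining with the previous display produces the claim with $C=C_0 C_1$. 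There is no substantive obstacle; the only point worth emphasizing is that employing a single fixed cutoff translated to each $x$ is precisely what makes the resulting constant uniform in $x$.
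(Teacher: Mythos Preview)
Your proposal is correct and follows essentially the same route as the paper: multiply $w$ by a smooth cutoff supported in $B_1(x)$ that equals $1$ at $x$, apply the global embedding \eqref{Evans}, and control the product's $W^{s,2}$ norm via the Leibniz rule with an $x$-independent constant. The only cosmetic difference is that the paper writes down an explicit bump function $\xi(z)=\exp\big(\frac{|z-x|^2}{|z-x|^2-1}\big)$ centered at $x$, whereas you fix one cutoff at the origin and translate it; both choices make the $x$-uniformity of the constant transparent.
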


\begin{proof}
For any $x\in \R^d$, consider the cut-off function $\xi\in \Cc^\infty(\R^d)$ defined by $\xi(z):= \exp\big(\frac{|z-x|^2}{|z-x|^2-1}\big)$ for $z\in B_1(x)$ and $\xi\equiv 0$ on $\R^d\setminus B_1(x)$. Note that $\xi(x)=1$ and $0<\xi \leq 1$ on $B_1(x)$. It follows that
\begin{equation}\label{xi u}
	\| \xi w\|_{W^{s,2}(\R^d)}=\| \xi w\|_{W^{s,2}(B_1(x))}\leq K  \|w\|_{W^{s,2}(B_1(x))},
\end{equation}
where $K>0$ is a constant depending on the $L^\infty(B_1(x))$-norm of the derivatives of $\xi$ up to the $s^{th}$ order. By the definition of $\xi$, this $L^\infty(B_1(x))$-norm is independent of $x\in\R^d$. Hence, $K>0$ is independent of $x\in\R^d$ and depends on only $d$ and $s$. Finally, observe that 	
$
|w(x)|\leq  \|\xi w\|_{L^\infty(\R^d)}\leq C\|\xi w\|_{W^{s,2}(\R^d)}\leq CK  \|w\|_{W^{s,2}(B_1(x))},
$
where the second inequality stems from \eqref{Evans} (applicable as \eqref{xi u} implies $\xi w\in W^{s,2}(\R^d)$) and the third inequality is due to \eqref{xi u}. The desired result follows as $C$ and $K$ depend on only $d$ and $s$. 
\end{proof}

Lemma~\ref{lm.infinity.emded} provides a local Sobolev estimate of $w$ with a potentially large order $s>d/2$. Our goal now is to reduce the order to a sufficiently smaller one.
We first note that the reduction from order $2$ to 0 (i.e., from $W^{2,q}$ to $W^{0,q}=L^q$) is readily known, when $w$ is a solution to an elliptic equation. 
Specifically, given a bounded connected open domain $E\subset \R^d$, recall the elliptic operator $L^{(\gamma,\beta,\delta)}$ in \eqref{L}. 
As long as $\beta\in\Cc^0(E)$, $\delta\in\Cc^0(\overline E)$, and \eqref{tr bdd} is satisfied, for any fixed $q\ge 1$, a classical interior $W^{2,q}$ estimate holds for any $w\in W^{2,q}(E)$ that satisfies \eqref{L=f} a.e.: 
for any $E'\subset\subset E$, 
\begin{equation}\label{W^2,q}
	\|w\|_{W^{2,q}(E')} \le C (\|w\|_{L^q(E)} + \|f\|_{L^q(E)}),
\end{equation}
where $C>0$ is a constant that depends on only $d$, $q$, $K$ in \eqref{tr bdd}, $\text{dist}(E',\partial E)$, $\gamma$, $\|\beta\|_{\Cc^{0}(E)}+\|\delta\|_{\Cc^{0}(E)}$, and the modulus of continuity of $\delta_{ij}$; see e.g., \cite[Theorem 4.2, page 47]{chen1998second}.

In the following, we will derive a higher-order counterpart of \eqref{W^2,q}: for any $k\in \N_0$, $W^{2,q}$ and $L^q$ in \eqref{W^2,q} will be replaced by $W^{k+2, q}$ and $W^{k, q'}$ (or $L^{q'}$), respectively, for some $q'>q$. In so doing, we find that the constant $C>0$ will depend on not only $\|\beta\|_{\Cc^{0}(E)}$,  but also $\|\beta\|_{W^{k,q'}(E)}$. This additional dependence on $\|\beta\|_{W^{k,q'}(E)}$ is spelled out explicitly in \eqref{eq.lemma.wkest} below, our ultimate higher-order interior estimate that reduces order from $k+2$ to $k$. On one hand, this sets us apart from standard interior estimates in the PDE literature, where dependence on the coefficients of a PDE (including $\beta$) is incorporated into a constant multiplier $C>0$; on the other hand, such spelled-out explicit dependence is necessary for Proposition~\ref{thm.the.uniformbound} below; see Remark~\ref{rem:explicit beta} (i) for details. 

\begin{Lemma}\label{lm.hnorm.estimate}
	Consider the elliptic operator $L^{(\gamma,\beta,\delta)}$ in \eqref{L} with $E=\R^d$ and suppose that \eqref{tr bdd} holds.
	Then, there exist three nondecreasing functions 
	\begin{align*}
		&A(k,q): \N_0\times[1,\infty)\rightarrow \R_+\ \hbox{satisfying}\  A(k,q)\ge q,\\
		&Q(k,q):  \N_0 \times[1,\infty)\rightarrow \N_0,\\
		&R(k,\eta): \N_0\times \R_+\rightarrow \R_+\ \hbox{satisfying}\  R(k,\eta)\geq 2 \eta,
	\end{align*}
	such that whenever $\beta\in \Cc^{k}(\R^d)$ and $\delta \in \Cc^{k\vee 1}(\R^d)$ for some $k\in\N_0$, the following holds: given $f\in \Cc^k(\R^d)$, any solution $w\in \Cc^{k+2}(\R^d)$ to \eqref{L=f} (with $E=\R^d$) 
	satisfies
	\be\label{eq.lemma.wkest} 
	\begin{aligned}
		\|w \|_{W^{k+2,q}(B_\eta(x))}\leq & H_k\left(1+\|\beta\|_{W^{k,A(k,q)}(B_{R(k,\eta)}(x))}\right)^{Q(k,q)}\\
		&\cdot \left(\|w\|_{L^{A(k,q)}(B_{R(k,\eta)}(x))} +\|f\|_{W^{k,A(k,q)}(B_{R(k,\eta)}(x))}\right),
	\end{aligned}
\ee 
	for all $x\in\R^d$ and $(q,\eta)\in[1,\infty)\times \R_+$, where $H_k>0$ is a constant that depends on only $k$, $K$ in \eqref{tr bdd}, $q$, $\eta$, $d$, $\gamma$, and $\Phi_k := \|\beta\|_{\Cc^{0}(\R^d)}+\|\delta\|_{\Cc^{k\vee 1}(\R^d)}$.
\end{Lemma}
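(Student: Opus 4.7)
The plan is to induct on $k \in \N_0$. For $k = 0$, I would take $A(0, q) = q$, $Q(0, q) = 0$, and $R(0, \eta) = 2\eta$; the estimate \eqref{eq.lemma.wkest} then reduces to the classical interior $W^{2,q}$ bound \eqref{W^2,q} applied on the pair $B_\eta(x) \subset\subset B_{2\eta}(x)$, with the constant absorbed into $H_0$ and the modulus-of-continuity dependence on $\delta$ captured by $\|\delta\|_{\Cc^1}\le \Phi_0$.

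For the inductive step from $k-1$ to $k\ge 1$, fix a multi-index $a$ with $|a|_{l_1} = k$ and apply $D^a$ to the equation $L^{(\gamma,\beta,\delta)}w = f$. Leibniz's rule gives
\begin{equation*}
L^{(\gamma,\beta,\delta)}(D^a w) = D^a f - \sum_{0 < b \le a}\binom{a}{b}\Bigl[(D^b\beta)\cdot D(D^{a-b}w) + \tfrac{1}{2}\tr\bigl((D^b(\delta\delta'))\,D^2(D^{a-b}w)\bigr)\Bigr] =: f_a,
\end{equation*}
so that $D^a w$ solves an elliptic equation with the same principal part and new forcing $f_a$. Because $|b|\ge 1$ in every summand, each term pairs a derivative of $\beta$ (or $\delta\delta'$) of order at most $k$ with a derivative of $w$ of order at most $k+1$. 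Applying \eqref{W^2,q} to $D^a w$ on $B_\eta(x)\subset\subset B_{R_1}(x)$ for some $R_1>\eta$ yields
\begin{equation*}
\|D^a w\|_{W^{2, q}(B_\eta(x))} \le C\bigl(\|D^a w\|_{L^q(B_{R_1}(x))} + \|f_a\|_{L^q(B_{R_1}(x))}\bigr),
\end{equation*}
where $C$ depends only on $\Phi_k$, $K$, $\gamma$, $q$, $d$, and $R_1-\eta$.

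The crux is estimating $\|f_a\|_{L^q(B_{R_1}(x))}$. For each $\beta$-commutator $(D^b\beta)\cdot D^{a-b+1}w$ with $1\le|b|\le k$, I would apply H\"older's inequality with conjugate exponents $p_1 = p_2 = 2q$, giving the bound $\|\beta\|_{W^{k,2q}(B_{R_1}(x))}\cdot \|w\|_{W^{k,2q}(B_{R_1}(x))}$, and then invoke the inductive hypothesis for $k-1$ with exponent $2q$ in place of $q$ to control $\|w\|_{W^{k+1,2q}}$ on a still larger ball. For each $\delta$-commutator $(D^b(\delta\delta'))\cdot D^{a-b+2}w$, since $\delta\in\Cc^k$ when $k\ge 1$, the $L^\infty$ bound on $D^b(\delta\delta')$ is absorbed into a constant depending on $\Phi_k$, and the remaining factor $\|w\|_{W^{k+1, q}}$ is bounded by the inductive hypothesis with the same exponent $q$.

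Summing over $|a|\le k$ and setting $A(k, q) := A(k-1, 2q)$, $Q(k, q) := 1 + Q(k-1, 2q)$, and $R(k, \eta)$ large enough to accommodate the nested balls forced by H\"older and the inductive hypothesis, verifies \eqref{eq.lemma.wkest} as well as the claimed monotonicity properties of $A, Q, R$ (since $A(k-1,\cdot)$ is nondecreasing by induction and $A(k,q)=A(k-1,2q)\ge 2q\ge q$, and similarly for $Q$ and $R$). The main obstacle I anticipate is bookkeeping: ensuring that the freshly picked-up factor $(1+\|\beta\|_{W^{k, 2q}})$ from the new $\beta$-commutators combines cleanly with the inherited $(1+\|\beta\|_{W^{k-1, A(k-1, 2q)}})^{Q(k-1, 2q)}$ from the inductive hypothesis---which works because $\|\beta\|_{W^{k-1,\cdot}} \le \|\beta\|_{W^{k, \cdot}}$ and $A(k, q) \ge 2q$---to yield the single factor $(1+\|\beta\|_{W^{k, A(k, q)}})^{Q(k, q)}$ on the right-hand side of \eqref{eq.lemma.wkest}.
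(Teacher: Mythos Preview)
Your proposal is correct and follows the paper's approach: induct on $k$ with the same base case $A(0,q)=q$, $Q(0,q)=0$, $R(0,\eta)=2\eta$, differentiate the equation to expose $\beta$- and $\delta$-commutators, split the $\beta$-commutators by H\"older with exponent $2q$, and close via the inductive hypothesis applied to $w$ on a larger ball. The paper organizes the inductive step slightly differently---it differentiates only \emph{once} and applies the level-$(k-1)$ hypothesis to $D_{x_j}w$ rather than the base case to $D^a w$---which leads to the somewhat larger recursions $A(k,q)=A\bigl(k-1,2A(k-1,q)\bigr)$ and $Q(k,q)=Q(k-1,q)+Q\bigl(k-1,2A(k-1,q)\bigr)+1$, but the mechanism is the same and your simpler bookkeeping works just as well.
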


\begin{proof}
	We prove the desired result by induction on $k\in \N_0$. For the case $k=0$, by taking 
	\begin{equation}\label{k=0 case}
		A(0,q) = q,\ \ Q(0,q)=0,\ \ \hbox{and}\ \ R(0,\eta)=2\eta\quad \forall (q,\eta)\in [1,\infty)\times \R_+, 
	\end{equation}
	we observe from the classical estimate \eqref{W^2,q} that \eqref{eq.lemma.wkest} holds, with $H_0>0$ a constant depending on only $K$, $q$, $\eta$, $d$, $\gamma$, and $\Phi_0$. 
	Now, suppose that for some $s\in\N$ such that $\beta,\delta\in \Cc^{s}(\R^d)$, \eqref{eq.lemma.wkest} holds for all $k\leq s-1$, with $A(k,q)$, $Q(k,q)$, $R(k,\eta)$, and $H_k$ all known for $k\leq s-1$ and $(q, \eta)\in [1,\infty)\times \R_+$. Our goal is to establish \eqref{eq.lemma.wkest} for $k=s$. 
	
	For any $f\in \Cc^s(\R^d)$, let $w\in \Cc^{s+2}(\R^d)$ be a solution to \eqref{L=f} (with $E=\R^d$). Fix $x\in\R^d$ and $(q, \eta)\in [1,\infty)\times \R_+$ and let $R(s,\eta)\ge 2 \eta$ be a number whose precise value will be determined subsequently. For $j=1,\ldots,d$, a direct calculation shows that $D_{x_j} w\in\Cc^{s+1}(\R^d)$ satisfies
	\be\label{eq.lu.extend}  
	\begin{aligned}
		L^{(\gamma,\beta,\delta)} (D_{x_j} w) &=D_{x_j}(L^{(\gamma,\beta,\delta)} w)-\tr(D_{x_j}(\delta\delta') D^2_x w)-D_{x_j} \beta \cdot D_x w\\
		&=D_{x_j}f-\tr(D_{x_j}(\delta\delta') D^2_x w)-D_{x_j}\beta \cdot D_x w =: \tilde{f}\in  \Cc^{s-1}(\R^d).
	\end{aligned}
	\ee
	Hence, by the induction hypothesis (particularly, \eqref{eq.lemma.wkest} with $k=s-1$), we have 
	\be\label{eq.lu.normstart}   
	\begin{aligned}
%&	\hspace{-0.3in} 
	\| D_{x_j} w\|_{W^{s+1,q}(B_\eta(x))}
&	\leq  H_{s-1}\left(1+\|\beta\|_{W^{{s-1},A(s-1,q)}(B_{R(s-1,\eta)}(x))}\right)^{Q(s-1,q)}\\
		&\hspace{0.15in}  \cdot\left(\|D_{x_j}w\|_{L^{A(s-1,q)}(B_{R(s-1,\eta)}(x))} +\|\tilde{f}\|_{W^{s-1,A(s-1,q)}(B_{R(s-1,\eta)}(x))}\right).
	\end{aligned}
\ee 
	In the following, we focus on estimating the right-hand side of \eqref{eq.lu.normstart}. For simplicity, we set
	\begin{equation}\label{bqbeta}
		\bar q := A(s-1,q)\ge q\ge 1\quad \hbox{and}\quad \bar \eta := R(s-1,\eta)\ge 2\eta>0.
	\end{equation}
	
	First, by the induction hypothesis (particularly, \eqref{eq.lemma.wkest} with $(k, q,\eta) = (0,\bar q,\bar\eta)$) and \eqref{k=0 case}, 
	\be\label{eq.dw.l} 
	\begin{aligned}
		\|D_{x_j} w\|_{L^{\bar q}(B_{\bar \eta}(x))}\leq 	\|w\|_{W^{2,\bar q}(B_{\bar \eta}(x))}
		\le \widetilde H_0 \Big(\| w\|_{L^{\bar q}(B_{2\bar\eta}(x))}+\|f\|_{L^{\bar q}(B_{2\bar\eta}(x))} \Big),
	\end{aligned}
	\ee 
	where $\widetilde H_0>0$ is a constant depending on only 
	$K$, $d$, $\gamma$, $\Phi_{0}$, and $(\bar q, \bar\eta)$, which in turn depends on $s$, $q$, and $\eta$. On the other hand, by the definition of $\tilde{f}$ in \eqref{eq.lu.extend}, 
	\be\label{tf-1} 
	\begin{aligned}
		\|\tilde{f}\|_{W^{s-1,\bar q}(B_{\bar\eta}(x))} \leq &\|f\|_{W^{s,\bar q}(B_{\bar\eta}(x))} + C_\delta \|w\|_{W^{s+1,\bar q}(B_{\bar\eta}(x))}
		+  \|D_{x_j} \beta \cdot D_x w\|_{W^{s-1,\bar q}(B_{\bar\eta}(x))},
	\end{aligned}
\ee 
	where $C_\delta>0$ is a constant depending on only $\|\delta\|_{\Cc^s(\R^d)}$. Moreover, by H\"{o}lder's inequality, the last term on the right-hand side of \eqref{tf-1} satisfies
	\be\label{eq.norm.prod}   
	\begin{aligned}
		&	\|D_{x_j} \beta \cdot D_x w\|_{W^{s-1,\bar q}(B_{\bar\eta}(x))} \leq  C \|\beta\|_{W^{s,2\bar q}(B_{\bar\eta}(x))} \| w\|_{W^{s,2\bar q}(B_{\bar\eta}(x))},
	\end{aligned}
	\ee
	where $C>0$ is a constant depending on only $s$ and $d$. By plugging \eqref{eq.dw.l}, \eqref{tf-1}, and \eqref{eq.norm.prod} into \eqref{eq.lu.normstart} and using the fact  
	\begin{equation}\label{a fact}
		\|\cdot\|_{W^{a,c}(E)}\le \|\cdot\|_{W^{\hat a,\hat c}(E)},\quad \hbox{if $a\le \hat a$, $c\le \hat c$, and $E\subset \R^d$ is bounded}, 
	\end{equation}
	we obtain
	\be\label{eq.lu.norm}  
	\begin{aligned}
%&\hspace{-0.18in}
	\| D_{x_j} w\|_{W^{s+1,q}(B_\eta(x))}
	&	\leq   H^* \left(1+\|\beta\|_{W^{{s-1},\bar q}(B_{\bar\eta}(x))}\right)^{Q(s-1,q)}
	 \cdot\Big( \|w\|_{L^{\bar q}(B_{2\bar\eta}(x))} 
		+\|f\|_{W^{s,\bar q}(B_{2\bar\eta}(x))} \\
		& \hspace{0.17in} + \left(1+\|\beta\|_{W^{s,2\bar q}(B_{\bar\eta}(x))}\right) \|w\|_{W^{s+1,2\bar q}(B_{\bar\eta}(x))}\Big),
	\end{aligned}
	\ee
	where $H^* :=2 H_{s-1}(1+\widetilde H_0 +C_\delta+C) >0$ depends on only $s$, $K$, $q$, $\eta$, $d$, $\gamma$, and $\Phi_s$. Let us further estimate the last term in the above inequality, by noting that the induction hypothesis (particularly, \eqref{eq.lemma.wkest} with $(k, q,\eta) = (s-1, 2 \bar q, \bar\eta)$) implies
	\be\label{eq.lowerorder.norm}  
	\begin{aligned}
		\|w\|_{W^{s+1,2\bar q}(B_{\bar\eta}(x))} &\leq  \widetilde H_{s-1} \Big( 1+\|\beta\|_{W^{s-1,A(s-1, 2\bar q)}(B_{R(s-1,\bar\eta)}(x))}\Big)^{Q(s-1,2\bar q)}\\
		& \quad \cdot \Big(\|w\|_{L^{A(s-1,2\bar q)}(B_{R(s-1,\bar\eta)}(x))} +\|f\|_{W^{s-1,A(s-1,2\bar q)}(B_{R(s-1,\bar\eta)}(x))} \Big),
	\end{aligned}
	\ee
	where $\widetilde H_{s-1}>0$ is a constant depending on only $k=s-1$, $K$, $d$, $\gamma$, $\Phi_{s-1}$, and $(2\bar q, \bar\eta)$, which in turn depends on $s$, $q$, and $\eta$.
	Now, let us define 
	\begin{equation}\label{eq.a.R} 
		\begin{split}
			A(s,q) &:= A(s-1,2\bar q)\ge 2 \bar q,\quad R(s,\eta) := R(s-1,\bar\eta)\ge 2\bar\eta,\\
			Q(s,q) &:= Q(s-1,q)+Q(s-1, 2\bar{q})+1, 
		\end{split}
	\end{equation}
	and consider the constant $H_s := 2H^*(1+\widetilde H_{s-1})>0$, which depends on only $s$, $K$, $q$, $\eta$, $d$, $\gamma$, and $\Phi_s$. Then, by plugging \eqref{eq.lowerorder.norm} into \eqref{eq.lu.norm} and using \eqref{a fact}, we get
	\begin{align*} 
		&\| D_{x_j} w\|_{W^{s+1,q}(B_\eta(x))}\\
		&\leq  H^* \Big(1+\|\beta\|_{W^{{s-1},\bar q}(B_{\bar\eta}(x))}\Big)^{Q(s-1,q)}\\
		&\hspace{0.3in} \cdot \Big( \|w\|_{L^{\bar q}(B_{2\bar\eta}(x))} 
		+\|f\|_{W^{s,\bar q}(B_{2\bar\eta}(x))} + \widetilde H_{s-1}\left(1+\|\beta\|_{W^{s,A(s,q)}(B_{R(s,\eta)}(x))}\right)^{Q(s-1,2\bar q)+1}\\
		&\hspace{0.55in} \cdot \left(\|w\|_{L^{A(s,q)}(B_{R(s,\eta)}(x))} + \|f\|_{W^{s,A(s, q)}(B_{R(s,\eta)}(x))}\right)\Big)\\
		&\leq H_s \Big(1+\|\beta\|_{W^{s,A(s,q)}(B_{R(s,\eta)}(x))}\Big)^{Q(s,q)} \Big(\|w\|_{L^{A(s,q)}(B_{R(s,\eta)}(x))} + \|f\|_{W^{s,A(s, q)}(B_{R(s,\eta)}(x))}\Big).
	\end{align*}
	Summing this up over all $j=1,2,\ldots,d$ gives \eqref{eq.lemma.wkest} for $k=s$.
\end{proof}

\begin{Remark}\label{rem:explicit beta}
	In Proposition~\ref{thm.the.uniformbound} below, we will apply \eqref{eq.lemma.wkest} recursively to $\{v^n\}_{n\in\N}$ in the PIA \eqref{PIA}. Specifically, $\beta$ and $f$ in \eqref{eq.lemma.wkest} will respectively play the roles of $\hat{b}(\cdot,D_x v^{n-1}(\cdot))$, the controlled drift, and $\hat{r}(\cdot , D_x v^{n-1}(\cdot))-\hat{\Hc}(\cdot,D_x v^{n-1}(\cdot))$, the reward plus entropy; see \eqref{fmbm}-\eqref{eq.vn.fn} below. Note that
	\begin{enumerate}[\normalfont(i)]
		\item As $\beta =\hat{b}(\cdot,D_x v^{n-1}(\cdot))$ changes across different iterations, it is important that \eqref{eq.lemma.wkest} spells out the explicit dependence on $\beta$, so that the estimate of $v^n$ can be made precise, in terms of the lower-order Sobolev norms of $\{v^m\}_{m\le n}$; see \eqref{eq.vn.h0norm0} below. 
		\item
		The explicit formulas of functions $A(k,q)$, $Q(k,q)$, $R(k,\eta)$ in \eqref{eq.lemma.wkest} 
		are not needed, although \eqref{bqbeta} and \eqref{eq.a.R} above 
		already provide a recursive formula to precisely define these functions.
	\end{enumerate}
\end{Remark}

In view of Remark~\ref{rem:explicit beta}, for the eventual application of \eqref{eq.lemma.wkest} in Proposition~\ref{thm.the.uniformbound}, we still need to compute the $W^{k,q}$-norms of $\hat{b}(\cdot,p(\cdot))$ and $\hat{r}(\cdot , p(\cdot))-\hat{\Hc}(\cdot,p(\cdot))$. %This is achieved in the next result.

\begin{Lemma}\label{lm.fn.vn-1}
	Fix $q\in[1,\infty)$. Suppose that there exists $k\in \N_0$ such that $\Lambda_k<\infty$ in \eqref{eq.parameters.m} and $p:\R^d\rightarrow \R^d$ is of the class $\Cc^{k}(\R^d)$. Then, for any $x_0\in\R^d$ and $\eta>0$, the function $f(x,y,u) := r(x,u)-\Hc(x, y,u)$ (or, $f(x,u) := b(x,u)$) satisfies
	$$ 
	\|\hat{f}(\cdot,p(\cdot))\|_{W^{k,q}(B_\eta(x_0))}\leq C_{k} \left(1+\|p\|_{W^{k,(k+1)2^{k+1}q }(B_\eta(x_0))}\right)^{(k+1)2^{k+1}},
	$$
	where $C_k>0$ is a constant that depends on only $k$, $\Lambda_k$, $\lambda$, $d$, $q$, and $\eta$.
\end{Lemma}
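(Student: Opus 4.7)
The plan is to promote the pointwise derivative estimate of Lemma \ref{lemma.lips.xy} (i) into an integral estimate over $B_\eta(x_0)$. That lemma already controls $|D^a_x \hat f(x,p(x))|$ for $|a|_{l_1}=k$ by a polynomial in $|p(x)|$ times a power of $\sum_{|c|_{l_1}\le k}|D^c_x p(x)|$; integrating the $q$-th power of such a product is precisely what produces a $W^{k,q'}$-norm of $p$ for the suitably larger exponent $q'=(k+1)2^{k+1}q$ appearing in the statement.

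I would proceed in three steps. First, for each multi-index $a$ with $|a|_{l_1}=k'\le k$, apply Lemma \ref{lemma.lips.xy} (i) with $k$ replaced by $k'$ (valid since $\Lambda_{k'}\le\Lambda_k<\infty$ and $p\in\mathcal C^{k'}(\R^d)$). Because the exponents $(k'+1)2^{k'}$ and $k'$ are nondecreasing in $k'$ and $|p(x)|\le Y(x):=1+\sum_{|c|_{l_1}\le k}|D^c_x p(x)|$, all the resulting estimates are dominated uniformly by
\[
|D^a_x \hat f(x,p(x))|\le \widetilde C_k\, Y(x)^{(k+1)2^k+k}\le \widetilde C_k\, Y(x)^{(k+1)2^{k+1}},
\]
where the final inequality uses the elementary arithmetic fact $(k+1)2^k+k\le (k+1)2^{k+1}$, valid for all $k\in\N_0$. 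Second, set $N:=(k+1)2^{k+1}$, raise to the $q$-th power, and apply the standard inequality $(a_1+\cdots+a_m)^{Nq}\le m^{Nq-1}\sum_{j=1}^m a_j^{Nq}$ (with $m$ the number of multi-indices $c$ of order at most $k$, depending only on $k,d$) to decouple the terms inside $Y$. Integrating over $B_\eta(x_0)$, summing over $|a|_{l_1}\le k$, and using $\sum_c\|D^c_x p\|_{L^{Nq}(B_\eta(x_0))}^{Nq}\le \|p\|_{W^{k,Nq}(B_\eta(x_0))}^{Nq}$ yields
\[
\|\hat f(\cdot,p(\cdot))\|_{W^{k,q}(B_\eta(x_0))}^{q}\le C'_k\bigl(1+\|p\|_{W^{k,Nq}(B_\eta(x_0))}^{Nq}\bigr),
\]
where $C'_k$ absorbs $\Leb(B_\eta(x_0))$ and the combinatorial factors. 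Third, take the $q$-th root using the subadditivity $(A+B)^{1/q}\le A^{1/q}+B^{1/q}$ (valid for $q\ge 1$) to deliver the target bound, the exponent $N=(k+1)2^{k+1}$ on $(1+\|p\|_{W^{k,Nq}})$ arising naturally from the ratio $Nq/q$.

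I do not anticipate a substantive obstacle: once Lemma \ref{lemma.lips.xy} (i) is in hand, the proof is essentially bookkeeping on exponents and integrability. The two delicate points are (i) verifying the arithmetic inequality $(k+1)2^k+k\le(k+1)2^{k+1}$ that lets me package the product of the two factors in Lemma \ref{lemma.lips.xy} (i) into a single power of $Y$ (so that a single Lebesgue exponent $Nq$ suffices for $p$ and all its derivatives), and (ii) ensuring that the constant $C_k$ depends only on $k,\Lambda_k,\lambda,d,q,\eta$, which follows because the pointwise constant in Lemma \ref{lemma.lips.xy} (i) depends only on $\Lambda_{k'}\le\Lambda_k$, $k'\le k$, $\lambda$, and $d$, while the integration contributes only the factor $\Leb(B_\eta(x_0))$.
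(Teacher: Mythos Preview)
Your proposal is correct and follows essentially the same route as the paper: start from the pointwise bound of Lemma~\ref{lemma.lips.xy}~(i), raise to the $q$-th power, integrate over $B_\eta(x_0)$, and repackage the result as a $W^{k,Nq}$-norm of $p$ with $N=(k+1)2^{k+1}$. The only minor variation is that the paper keeps the two factors $(1+|p|)^{(k+1)2^k}$ and $(1+\sum_{|c|\le k}|D^c_x p|)^{(k+1)2^k}$ separate and merges them via H\"older's inequality, whereas you absorb $|p|$ into $Y$ from the outset (since $|p|=|D^0_x p|\le Y-1$) and thereby avoid H\"older entirely---a slight simplification, but not a different idea.
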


\begin{proof}
In the following, $C_k>0$ is a generic constant that may change from line to line but depends on only $k$, $\Lambda_k$, $\lambda$, $d$, $q$, and $\eta$. By \eqref{eq.lips.xy}, % in Lemma \ref{lemma.lips.xy} (i), 
for any multi-index $a=(a_1,\ldots,a_d)$ with $|a|_{l_1}\in \{0,1,\ldots,k\}$,
\begin{align*}
	&\| D^a_x \hat{f}(\cdot,p(\cdot))\|_{L^q(B_\eta(x_0))} \\
	&\leq  C_k \bigg\{\int_{B_\eta(x_0)} \left(1+|p(x)|^{(k+1)2^k}\right)^q \bigg(1+\sum_{|c|_{l_1}=0,\ldots,k}|D^c_x p(x)|\bigg)^{kq} dx\bigg\}^{1/q}\\
	&\le C_k \bigg\{\int_{B_\eta(x_0)} \Big(1+|p(x)|\Big)^{(k+1)2^k q} \bigg(1+\sum_{|c|_{l_1}=0,\ldots,k}|D^c_x p(x)|\bigg)^{(k+1)2^k q} dx\bigg\}^{1/q}\\
	& \leq  C_k\bigg\{\int_{B_\eta(x_0)} \left(1+ |p(x)|^{(k+1)2^{k+1}q}\right) dx \bigg\}^{\frac{1}{2q}}
 \cdot \bigg\{\int_{B_\eta(x_0)} \bigg(1+\sum_{|c|_{l_1}=0,\ldots,k}|D^c_x p(x)|^{(k+1)2^{k+1}q}\bigg) dx\bigg\}^{\frac{1}{2q}}\\
	&\le C_k \bigg\{ \left(\Leb(B_\eta(x_0))+ \|p\|^{(k+1)2^{k+1}q}_{W^{k,(k+1)2^{k+1}q}(B_\eta(x_0))}\right) \bigg\}^{\frac{1}{q}}\leq  C_k\left(1+ \|p\|_{W^{k,(k+1)2^{k+1}q}(B_\eta(x_0))}\right)^{(k+1)2^{k+1}},
\end{align*}
where the third inequality follows from H\"{o}lder's inequality and $(b_1+b_2)^j\leq 2^{j-1}({b_1}^j+b_2^j)$ for $b_1,b_2\in\R$ and $j\ge 1$. Note that $C_k$ in the first inequality above (obtained from \eqref{eq.lips.xy} in Lemma~\ref{lemma.lips.xy} (i)) depends on only $k$, $\Lambda_k$, $\lambda$, and $d$; it gets additional dependence on $q$ in the third line, and on $\eta$ in the last line. Summing up the above inequality over all $a$ with $|a|_{l_1}\in\{0,1,\ldots,k\}$ yields the desired result.
\end{proof}

%%%%%%%%%%%%%%%%%%%%%%%%%%%%%

\subsection{Logarithmic Growth of the Entropy}\label{subsec:entropy}
Now, we proceed to Step 3 of our plan specified in Section~\ref{subsec:grand plan}. While the entropy term in \eqref{V^pi} has a natural upper bound (i.e.,  $\ln(\Leb(U))$) thanks to \eqref{eq.kl.lbound}, it is generally unclear how to control its behavior on the lower end. To obtain an appropriate lower bound that grows only logarithmically, we will impose additional regularity on the functions $r$ and $b$ as well as the action space $U$. 

\begin{Assumption}\label{assume.u.lips}
	The maps $u\mapsto r(x,u)$ and $u\mapsto b(x,u)$ are Lipschitz, uniformly in $x$; that is, 
	\begin{equation}\label{Theta}
		\Theta:= \sup_{u_1,u_2\in U, x\in \R^d}\frac{|r(x,u_1)-r(x,u_2)|+|b(x,u_1)-b(x,u_2)|}{|u_1-u_2|}<\infty.
	\end{equation}
\end{Assumption}

We will also assume that the action space $U\subset \R^\ell$ fulfills a {\it uniform cone condition}. To properly state the assumption, note that when $\ell>1$, for any $\alpha\in [0,\pi/2]$, 
$$
\Delta_\alpha := \{u=(u_1,\ldots,u_\ell)\in\R^\ell : u_1^2+\ldots+u_{\ell-1}^2\leq \tan^2(\alpha) u_{\ell}^2\; \text{and}\; u_{\ell}\geq 0\}
$$
is a cone with vertex $0\in\R^\ell$, axis $u_1=u_2=\ldots=u_{\ell-1}=0$, and angle $\alpha$. Given $u\in\R^\ell$, the region obtained by a rotation of $u+\Delta_\alpha$ in $\R^\ell$ about $u$ is called {\it a cone with vertex $u$ and angle $\alpha$}. 

\begin{Assumption}\label{assume.U.cone}
When $\ell>1$, there exist $\zeta>0$ and $\alpha\in (0, \pi/2]$ such that for any $u\in U$, there is a cone with vertex $u$ and angle $\alpha$ (denoted by $\operatorname{cone}(u,\alpha)$) that satisfies $\left(\operatorname{cone}(u,\alpha)\cap B_{\zeta}(u)\right)\subseteq U$.
When $\ell=1$, there exists $\zeta>0$ such that for any $u\in U$, either $[u-\zeta,u]$ or $[u, u+\zeta]$ is contained in $U$.
\end{Assumption}

\begin{Remark}
Assumption~\ref{assume.U.cone} states that a cone with a fixed size (determined by the fixed length $\zeta>0$ and angle $\alpha\in (0, \pi/2]$) can be fitted inside $U$ at any $u\in U$. %This is not a very restrictive condition for $U$. For example, 
Any convex domain of $\R^\ell$ with a positive Lebesgue measure fulfills Assumption~\ref{assume.U.cone}, and so does a finite disjoint union of such domains.
\end{Remark}

\begin{Remark}
The standing condition ``$0<\Leb(U)<\infty$'' and Assumption~\ref{assume.U.cone} imply that $U$ is bounded. Indeed, if $U$ is not bounded, there exists $u_n\in U\setminus B_n(0)$ for all $n\in \N$. By Assumption~\ref{assume.U.cone}, we have $\big(\operatorname{cone}(u_n,\alpha)\cap B_{\zeta\wedge 1/2}(u_n)\big)\subseteq U$, where $\operatorname{cone}(u_n,\alpha)$ is a cone with vertex $u_n$ and angle $\alpha$. As $\{\operatorname{cone}(u_n,\alpha)\cap B_{\zeta\wedge 1/2}(u_n)\}_{n\in \N}$ is an infinite collection of disjoint sets of a fixed size, we must have $\Leb(U)=\infty$, a contradiction.
\end{Remark}

Under the above two assumptions, we find that the Gibbs-form function $\Gamma(x,y,u)$ in \eqref{eq.gibbs.pi} has polynomial growth in $y$, uniformly in $x$ and $u$.

\begin{Lemma}\label{lm.1}
	Suppose that Assumptions \ref{assume.u.lips} and \ref{assume.U.cone} hold. Then, the function $\Gamma$ in \eqref{eq.gibbs.pi} satisfies 
	\[
	\Gamma(x,y,u) \le C^\ell(1+ |y |)^\ell\quad  \forall (x,y,u)\in \R^d\times\R^d\times U,
	\] 
	where $C>0$ is a constant depending on only $\ell$, $\lambda$, $\Theta$, 
	$\zeta$, and $\alpha$. 
\end{Lemma}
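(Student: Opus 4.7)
The strategy is to fix $(x,y,u)\in\R^d\times\R^d\times U$ and get a sharp enough lower bound on the denominator of $\Gamma(x,y,u)$ in \eqref{eq.gibbs.pi}, then divide. The numerator is $\exp\bigl(\tfrac{1}{\lambda}[b(x,u)\cdot y+r(x,u)]\bigr)$, and the plan is to show the denominator is at least this numerator times a factor of order $(1+|y|)^{-\ell}$, whence the claimed polynomial upper bound for $\Gamma$.

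First, I use Assumption \ref{assume.u.lips}: for every $u'\in U$,
\[
b(x,u')\cdot y + r(x,u') \;\ge\; b(x,u)\cdot y + r(x,u) - \Theta\,(1+|y|)\,|u-u'|,
\]
by splitting off the difference and applying the Lipschitz bound \eqref{Theta} to both $r$ and $b$. Exponentiating and integrating over $U$ gives
\[
\int_U e^{\frac{1}{\lambda}[b(x,u')\cdot y + r(x,u')]} du' \;\ge\; e^{\frac{1}{\lambda}[b(x,u)\cdot y + r(x,u)]} \int_U e^{-a|u-u'|} du',
\]
where $a := \Theta(1+|y|)/\lambda$. So everything reduces to a lower bound on $I(a):=\int_U e^{-a|u-u'|}du'$ of the form $I(a)\ge c\,(1+|y|)^{-\ell}$ with $c$ depending only on $\ell,\lambda,\Theta,\zeta,\alpha$.

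Now I invoke Assumption~\ref{assume.U.cone} to replace $U$ in the above integral by $\operatorname{cone}(u,\alpha)\cap B_\zeta(u)$ (or, when $\ell=1$, by a one-sided interval of length $\zeta$ attached to $u$). After the translation $v=u'-u$ and a rotation in $\R^\ell$, this region becomes $\Delta_\alpha\cap B_\zeta(0)$, which is independent of $u$. For $\ell>1$, switching to polar coordinates in $\R^\ell$ aligned with the axis of $\Delta_\alpha$ yields
\[
I(a)\;\ge\;\omega(\alpha,\ell)\int_0^\zeta r^{\ell-1}e^{-ar}\,dr,
\]
where $\omega(\alpha,\ell)>0$ is the (finite, strictly positive) solid angle of $\Delta_\alpha$; the case $\ell=1$ is the same formula with $\omega=1$ and no $r^{\ell-1}$ factor (equivalently $\ell-1=0$).

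The remaining step is to bound $\int_0^\zeta r^{\ell-1}e^{-ar}\,dr$ from below. Truncating at $r\le \min(\zeta,1/a)$ and using $e^{-ar}\ge e^{-1}$ on that interval gives
\[
\int_0^\zeta r^{\ell-1}e^{-ar}\,dr \;\ge\; \frac{e^{-1}}{\ell}\,\min(\zeta,1/a)^\ell \;\ge\; \frac{e^{-1}}{\ell}\left(\frac{c_0}{1+a}\right)^{\!\ell},
\]
with $c_0:=\min(\zeta,1)$. Since $1+a\le (1+\Theta/\lambda)(1+|y|)$, the right-hand side is at least $K_\ell\,(1+|y|)^{-\ell}$ with $K_\ell = e^{-1}c_0^\ell\,(1+\Theta/\lambda)^{-\ell}/\ell$. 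Combining with the denominator estimate and dividing the numerator, I get
\[
\Gamma(x,y,u) \;\le\; \frac{(1+|y|)^\ell}{\omega(\alpha,\ell)\,K_\ell},
\]
and choosing $C>0$ so that $C^\ell\ge 1/(\omega(\alpha,\ell)K_\ell)$ yields the stated bound with $C$ depending only on $\ell,\lambda,\Theta,\zeta,\alpha$.

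The main obstacle is the last step, namely extracting a clean $(1+|y|)^{-\ell}$ lower bound for $I(a)$ uniformly in $u\in U$ and packaging all constants into a single $C^\ell$; the uniform cone condition is precisely what makes the region of integration independent of $u$, and the polar-coordinates computation is what produces the correct power $\ell$.
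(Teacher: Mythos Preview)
Your proof is correct and follows essentially the same route as the paper's: reduce $\Gamma(x,y,u)$ to the reciprocal of $\int_U e^{-a|u-u'|}du'$ via the Lipschitz bound, shrink $U$ to the cone $\Delta_\alpha\cap B_\zeta(0)$ via Assumption~\ref{assume.U.cone}, and evaluate the radial integral in polar coordinates. The only cosmetic difference is in the last step, where the paper splits into the two cases $a\zeta\le 1$ and $a\zeta>1$ (using the change of variable $z=ar$ and the incomplete gamma integral in the second case), while you use a single truncation at $\min(\zeta,1/a)$; both give the same $(1+|y|)^{-\ell}$ lower bound with constants depending only on $\ell,\lambda,\Theta,\zeta,\alpha$.
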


\begin{proof}
	Fix $x, y\in \R^d$. For an arbitrary $u_0\in U$, thanks to Assumption \ref{assume.u.lips}, 
	\begin{equation}\label{r+by Lips}
		\left|\big(r(x,u)+b(x,u)\cdot y\big) - \big(r(x,u_0)+b(x,u_0)\cdot y\big)\right| \leq \Theta (1+ |y|)|u-u_0|,\quad \forall u\in U. 
	\end{equation}
	This readily implies
	\be\label{eq.lm.ent1} 
	\begin{aligned}
		\Gamma(x,y,u_0) &= \frac{\exp\left(\frac{1}{\lambda}[r(x,u_0)+b(x,u_0)\cdot y] \right)}{\int_U \exp\left(\frac{1}{\lambda}[r(x,u)+b(x,u)\cdot y] \right)du}
		\leq \frac{1}{\int_U \exp\left(-\frac{\Theta}{\lambda}( 1+ |y|)|u-u_0|\right) du},
	\end{aligned}
	\ee
	where the inequality follows from dividing by $\exp(\frac{1}{\lambda}[r(x,u_0)+b(x,u_0)\cdot y])$ the numerator and the denominator and then using the estimate \eqref{r+by Lips}. 
	
	Now, let us prove the desired result for $\ell>1$ first. By Assumption \ref{assume.U.cone}, 
	\be\label{eq.lm.ent}  
	\begin{aligned}
		\int_U e^{-\frac{\Theta}{\lambda} (1 + |y|)|u-u_0|}  du &\geq \int_{\text{cone}(u_0,\alpha)\cap B_\zeta(u_0)} e^{-\frac{\Theta}{\lambda} (1 + |y|)|u-u_0|}  du 
		= \int_{\Delta_\alpha \cap B_\zeta(0)} e^{-\frac{\Theta}{\lambda} (1 + |y|) |u|} du,
	\end{aligned}
\ee 
	where the equality follows from a translation from $u_0$ to $0\in\R^\ell$ and a proper rotation about $0\in\R^\ell$. Let us estimate the right-hand side of \eqref{eq.lm.ent} in the next two cases. If $\frac{\Theta}{\lambda} (1+ |y|)\zeta \leq 1$, 
	$$
	\int_{\Delta_\alpha \cap B_\zeta(0)} e^{-\frac{\Theta}{\lambda} (1 + |y|) |u|} du \geq e^{-1} \Leb(\Delta_\alpha \cap B_\zeta(0)) =: K_0. 
	$$
	If $\frac{\Theta}{\lambda} (1+ |y|)\zeta > 1$, consider the two positive constants
	\[
	K_1 :=  \int_0^\pi\sin^{\ell-2}(\varphi_1)d\varphi_1 \cdots \int_0^\pi \sin (\varphi_{\ell-2})d\varphi_{\ell-2} \int_{-\alpha}^{\alpha} d \varphi_{\ell-1}\quad \hbox{and}\quad K_2 := \int_0^1z^{\ell-1}e^{-z}dz. 
	\]
	By using the $\ell$-dimensional spherical coordinates, we have
	\bee
	\begin{aligned}
		\int_{\Delta_\alpha \cap B_\zeta(0)} e^{-\frac{\Theta}{\lambda} (1 + |y|) |u|} du &= K_1 \int_0^\zeta r^{\ell-1}  e^{-\frac{\Theta}{\lambda} (1 + |y|) r} dr
		= K_1 \bigg(\frac{\lambda}{\Theta (1+ |y| )}\bigg)^\ell \int_0^{\frac{\Theta}{\lambda}(1 + |y|)\zeta} z^{\ell-1}e^{-z}dz\\
		& \geq K_1 K_2 \bigg(\frac{\lambda}{\Theta (1+ |y| )}\bigg)^\ell, 
	\end{aligned}
	\eee
	where the second equality follows from the change of variable $z=\frac{\Theta}{\lambda}(1+ |y|) r$. Combining the above two cases, we conclude from \eqref{eq.lm.ent1} and \eqref{eq.lm.ent} that
	$
	\Gamma(x,y,u_0) \leq \max\left\{\frac{1}{K_0},   \frac{1}{K_1K_2} \left(\frac{\Theta}{\lambda}(1+ |y|)\right)^\ell \right\},
	$	
	which implies the desired result as $K_0$, $K_1$, and $K_2$ depend on only $\alpha$, $\zeta$, and $\ell$. 	
	
	For $\ell=1$, by following arguments similar to the above, with $\Delta_\alpha\cap B_\zeta(0)$, $K_0$, $K_1$, and $K_2$ replaced by $[0,\zeta]$, $e^{-1}\zeta$, $1$, and $\int_0^1 e^{-z}dz=1-e^{-1}$, respectively, we can obtain the desired estimate. 
\end{proof}

Now, recall $\hat\Hc(x,y)$ defined in \eqref{eq.hat.notation} and \eqref{eq.h}, which stands for the entropy of the density function $u\mapsto \Gamma(x,y,u)$. We observe that the polynomial growth of $y\mapsto \Gamma(x,y,u)$ in Lemma~\ref{lm.1} swiftly turns into the logarithmic growth of $y\mapsto\hat\Hc(x,y)$.   

\begin{Corollary}\label{coro:H bdd}
	Suppose that Assumptions \ref{assume.u.lips} and \ref{assume.U.cone} hold. 
	Then, there exists $\kappa>0$, depending on only $\ell$, $\lambda$, $\Leb(U)$, $\Theta$, $\zeta$, and $\alpha$, such that
	\[
	\sup_{x\in \R^d} \big|\hat\Hc(x,y)\big|  \leq  \kappa +\lambda \ell \ln(1+|y|),\quad \forall y\in\R^d. 
	\]	
\end{Corollary}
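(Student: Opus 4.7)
The plan is to derive the upper bound on $|\hat\Hc(x,y)|$ by splitting it into control of the positive and negative parts of the integrand $\lambda \int_U \Gamma(x,y,u)\ln\Gamma(x,y,u)\,du$, each handled by a different tool.

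First I would handle the upper bound on $\hat\Hc(x,y)$ itself. Lemma \ref{lm.1} gives the pointwise polynomial bound $\Gamma(x,y,u)\le C^\ell(1+|y|)^\ell$, so $\ln\Gamma(x,y,u)\le \ell\ln C + \ell\ln(1+|y|)$ for every $u\in U$. Since $\Gamma(x,y,\cdot)$ is a probability density on $U$, multiplying by $\Gamma$ and integrating preserves the bound, yielding
\[
\hat\Hc(x,y) = \lambda\int_U \Gamma\ln\Gamma\,du \le \lambda\ell\ln C + \lambda\ell\ln(1+|y|).
\]

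Second, for the lower bound I would reuse the Kullback--Leibler inequality \eqref{eq.kl.lbound} that was already exploited in Lemma~\ref{lem:V^pi bdd}: applied to the density $f(\cdot)=\Gamma(x,y,\cdot)$, it gives $-\int_U \Gamma\ln\Gamma\,du \le \ln(\Leb(U))$, i.e.\ $\hat\Hc(x,y) \ge -\lambda\ln(\Leb(U)) \ge -\lambda|\ln(\Leb(U))|$.

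Combining the two bounds and setting $\kappa := \lambda\ell\ln C + \lambda|\ln(\Leb(U))|$ (which depends only on $\ell,\lambda,\Leb(U),\Theta,\zeta,\alpha$ because of the stated dependencies of $C$ in Lemma~\ref{lm.1}) produces
\[
|\hat\Hc(x,y)| \le \kappa + \lambda\ell\ln(1+|y|),
\]
uniformly in $x\in\R^d$. There is no real obstacle here; the only thing to verify carefully is that the constant $\kappa$ inherits its dependence only on the parameters listed in the statement, which follows directly from tracking the constants through Lemma~\ref{lm.1} and \eqref{eq.kl.lbound}.
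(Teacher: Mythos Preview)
Your proposal is correct and follows essentially the same approach as the paper: use Lemma~\ref{lm.1} for the upper bound on $\int_U \Gamma\ln\Gamma\,du$ and the Kullback--Leibler inequality \eqref{eq.kl.lbound} for the lower bound, then combine. One cosmetic point: your choice $\kappa = \lambda\ell\ln C + \lambda|\ln(\Leb(U))|$ need not be positive if $C<1$; the paper writes $\kappa = \lambda(|\ln(\Leb(U))| + \ell|\ln C|)$ to cover this, and you should do the same (or simply replace $\ln C$ by $|\ln C|$).
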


\begin{proof}
	By Lemma~\ref{lm.1}, there exists a constant $C>0$, depending on only $\ell$, $\lambda$, $\Theta$, $\zeta$, and $\alpha$,   such that $\ln(\Gamma(x,y,u)) \le \ell \ln C+\ell\ln (1+|y|)$ for all $(x,y,u)\in\R^d\times\R^d\times U$. As $u\mapsto \Gamma(x,y,u)$ is by definition a density function (see \eqref{eq.gibbs.pi}), this implies
	$
	\sup_{x\in\R^d} \int_U \ln(\Gamma(x,y,u)) \Gamma(x,y,u)du \le \ell \ln C+\ell\ln (1+|y|),\ \forall y\in \R^d. 
	$ 
	On the other hand, \eqref{eq.kl.lbound} readily gives $\sup_{x\in\R^d} \int_U \ln(\Gamma(x,y,u)) \Gamma(x,y,u)du \geq -\ln(\Leb(U)).$
	Hence, in view of \eqref{eq.hat.notation} and \eqref{eq.h}, we conclude that for all $y\in\R^d$
	\begin{align*}
		\sup_{x\in \R^d} \left|\hat\Hc(x,y)\right| = \lambda	\sup_{x\in \R^d}  \left| \int_U \ln(\Gamma(x,y,u))\Gamma(x,y,u)du \right|  
		\leq \lambda \big(|\ln(\Leb(U))| +  \ell |\ln C|+\ell\ln (1+|y|)\big),\
	\end{align*}	
	which shows that the desired result holds.
\end{proof}

\begin{Remark}\label{rem:no sublinear growth}
As pointed out in Remark~\ref{rem:sublinear important}, the logarithmic growth of $\|\hat\Hc(\cdot,p(\cdot))\|_{\Cc^0(\R^d)}$ in $\|p\|_{\Cc^0(\R^d)}$ is indispensable for our eventual proof of \eqref{eq.thm.uniform}. Notably, this modest growth is achieved under the $\Cc^0(\R^d)$-norm, but not under any $\Cc^k(\R^d)$-norm with $k\ge 1$. Indeed, Step 4 in the proof of Lemma \ref{lemma.lips.xy} (i) in Appendix \ref{sec:appendix} %a direct calculation 
indicates that $|D_{x} \hat\Hc(\cdot,p(\cdot))|$ has at least linear growth in $\|p\|_{\Cc^1(\R^d)}$. %; see Step 4 in the proof of Lemma \ref{lemma.lips.xy} (i) in Appendix \ref{sec:appendix}.
\end{Remark}

%%%%%%%%%%%%%%%%%%%%%%%%%%%%%%%%%%%%%%%%%%

\subsection{Achieving Optimality}\label{subsec:optimality}
Based on the preparations in Sections~\ref{subsec:Sobolev} and \ref{subsec:entropy}, the next result carefully implements the ``grand plan'' set forth in Section~\ref{subsec:grand plan} and ultimately establishes the desired uniform bound \eqref{eq.thm.uniform}. 

\begin{Proposition}\label{thm.the.uniformbound}
	Let $\Lambda_{\lfloor d/2 \rfloor+2}<\infty$ in \eqref{eq.parameters.m} and assume \eqref{eq.assume.ellip}. Suppose additionally that Assumptions \ref{assume.u.lips} and  \ref{assume.U.cone} hold. 
	Then, the sequence $\{v^n\}_{n\in\N}$ in PIA \eqref{PIA} satisfies \eqref{eq.thm.uniform}. 
\end{Proposition}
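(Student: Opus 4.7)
My plan is to implement directly the four-step ``grand plan'' sketched in Section~\ref{subsec:grand plan}, using the preparatory lemmas as the tools at each step. Set $s:=\lfloor d/2\rfloor+1$. Since $\Lambda_{s+1}=\Lambda_{\lfloor d/2\rfloor+2}<\infty$ and \eqref{eq.assume.ellip} holds, Corollary~\ref{prop.vn.bound} (with $\bar k=s+1$) guarantees that for all $n\ge s+1$, $v^n\in\Cc^{s+2,\alpha}_{\text{unif}}(\R^d)$ and satisfies the elliptic equation \eqref{v^n-1 to v^n}, i.e., $\Lc^{\pi^n}v^n+\hat r(\cdot,D_xv^{n-1})-\hat\Hc(\cdot,D_xv^{n-1})=0$. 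In particular, $v^n$ is a valid input to Lemmas~\ref{lm.infinity.emded} and \ref{lm.hnorm.estimate}. Crucially, the ``coefficient'' $\beta=\hat b(\cdot,D_xv^{n-1})$ of this PDE is bounded uniformly in $n$: since $\Gamma(x,y,\cdot)$ is a density on $U$, one has $\|\hat b(\cdot,D_xv^{n-1})\|_{\Cc^0(\R^d)}\le\Lambda_0$. This renders the constant $H_k$ of Lemma~\ref{lm.hnorm.estimate}, which depends on $\Phi_k=\|\beta\|_{\Cc^0}+\|\delta\|_{\Cc^{k\vee 1}}\le\Lambda_0+\Lambda_{s\vee 1}$, independent of $n$; this uniformity is indispensable for the argument below.

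The next step is to establish, for each $x\in\R^d$ and $n$ large, a pointwise bound
\[
|D_xv^n(x)|+|D_x^2v^n(x)|\le C\,\|v^n\|_{W^{s+2,2}(B_1(x))},
\]
by applying Lemma~\ref{lm.infinity.emded} componentwise to $D_xv^n$ and $D_x^2v^n$ (valid since $s>d/2$). To bound the right-hand side, apply Lemma~\ref{lm.hnorm.estimate} with $k=s,\,q=2,\,\eta=1$ to \eqref{v^n-1 to v^n}, then use Lemma~\ref{lm.fn.vn-1} to convert the factor $(1+\|\hat b(\cdot,D_xv^{n-1})\|_{W^{s,A(s,2)}})^{Q(s,2)}$ and the term $\|\hat r(\cdot,D_xv^{n-1})-\hat\Hc(\cdot,D_xv^{n-1})\|_{W^{s,A(s,2)}}$ into a polynomial in $\|v^{n-1}\|_{W^{s+1,q_1}(B_{R_1}(x))}$ for some enlarged exponent $q_1$ and radius $R_1$. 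Now iterate: bound $\|v^{n-1}\|_{W^{s+1,q_1}}$ via Lemma~\ref{lm.hnorm.estimate} (with $k=s-1$) and then Lemma~\ref{lm.fn.vn-1}, producing a polynomial in $\|v^{n-2}\|_{W^{s,q_2}}$; continue reducing the order by one each time. After $s+1$ iterations the order is driven down to zero, yielding an estimate
\[
\|v^n\|_{W^{s+2,2}(B_1(x))}\le \Phi\Bigl(\|v^n\|_{L^{q^*}(B_{R^*}(x))},\ldots,\|v^{n-s}\|_{L^{q^*}(B_{R^*}(x))},\ \|\hat\Hc(\cdot,D_xv^{n-s-1})\|_{L^{q^*}(B_{R^*}(x))}\Bigr),
\]
where $\Phi$ is a polynomial and $q^*,R^*>0,C>0$ depend only on $d,s,\lambda,\Lambda_{s+1},\rho,\eta_0$.

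Now apply the two ingredients of Section~\ref{subsec:entropy}. First, by Lemma~\ref{lem:V^pi bdd} together with Corollary~\ref{coro.PIA.works}, $\sup_n\|v^n\|_{\Cc^0(\R^d)}\le\tfrac1\rho(\|r\|_{\Cc^0}+\lambda|\ln\Leb(U)|)=:K_0<\infty$, so every $\|v^{n-i}\|_{L^{q^*}(B_{R^*}(x))}$ is bounded uniformly in $n$ and $x$. Second, Corollary~\ref{coro:H bdd} yields
\[
\|\hat\Hc(\cdot,D_xv^{n-s-1})\|_{L^{q^*}(B_{R^*}(x))}\le C_0\bigl(1+\ln(1+\|D_xv^{n-s-1}\|_{\Cc^0(\R^d)})\bigr).
\]
Plugging these into $\Phi$ and combining with the Sobolev embedding of the previous paragraph gives, for some constants $C,M>0$ uniform in $n$,
\[
\|D_xv^n\|_{\Cc^0(\R^d)}+\|D_x^2v^n\|_{\Cc^0(\R^d)}\le C\bigl(1+\ln(1+\|D_xv^{n-s-1}\|_{\Cc^0(\R^d)})\bigr)^M,
\]
which is exactly the self-referential inequality advertised in Remark~\ref{rem:sublinear important}.

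The proof concludes by a sublinear-recursion argument. Let $L_n:=\|D_xv^n\|_{\Cc^0(\R^d)}$; each $L_n$ is finite by Corollary~\ref{prop.vn.bound}. Since $\phi(t):=C(1+\ln(1+t))^M$ satisfies $\phi(t)/t\to 0$ as $t\to\infty$, there exists $M_0>0$ with $\phi(t)\le t$ for $t\ge M_0$, so $L_n\le\max(L_{n-s-1},\,\phi(M_0))$. Partitioning $\{n\ge s+1\}$ into $s+1$ residue classes modulo $s+1$ and using finiteness of $L_n$ on the base cases shows $\sup_nL_n<\infty$; the same inequality then forces $\sup_n\|D_x^2v^n\|_{\Cc^0(\R^d)}<\infty$, and combined with the uniform $\Cc^0$-bound this gives \eqref{eq.thm.uniform}. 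The principal technical obstacle is the bookkeeping in the $s+1$-fold iteration of Lemma~\ref{lm.hnorm.estimate}: one must track that at every stage the exponent $A(k,q)$ and radius $R(k,\eta)$ remain finite, that each multiplicative factor of the form $(1+\|\hat b\|_{W^{k,\cdot}})^{Q(k,q)}$ is itself a polynomial in $\|v^{m}\|_{W^{m',\cdot}}$ of the \emph{next} lower order, and that the composite polynomial $\Phi$ never produces superlinear growth in the sole unbounded quantity $\|D_xv^{n-s-1}\|_{\Cc^0}$---which is why the logarithmic estimate of Corollary~\ref{coro:H bdd} (rather than the generic polynomial bound of Lemma~\ref{lemma.lips.xy}) must be invoked only at the very last stage.
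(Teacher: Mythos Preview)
Your proposal is correct and follows essentially the same route as the paper's proof: the Sobolev embedding of Lemma~\ref{lm.infinity.emded}, the $s$-fold cascade of Lemma~\ref{lm.hnorm.estimate} combined with Lemma~\ref{lm.fn.vn-1} to drive the order down to $W^{2,q_s}$ of $v^{n-s}$, a final $k=0$ application of Lemma~\ref{lm.hnorm.estimate} that isolates $\|\hat\Hc(\cdot,D_xv^{n-s-1})\|_{L^{q^*}}$, the logarithmic bound from Corollary~\ref{coro:H bdd}, and the sublinear-recursion closing argument. One cosmetic slip: Lemma~\ref{lem:V^pi bdd} only gives an \emph{upper} bound for $v^n$, so your $K_0$ should be replaced by $\max(\|v^1\|_{\Cc^0(\R^d)},K_0)$ as in the paper's \eqref{eq.thm1.0}, using the monotonicity $v^n\ge v^1$ from Corollary~\ref{coro.PIA.works} for the lower bound.
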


\begin{proof}
	For convenience, we take $s :=\lfloor d/2 \rfloor+1$ throughout the proof. 
	
	Step 1. For any $x\in \R^d$ and $n\geq s+2$, we will show that $|D_x v^n(x)|+|D^2_x v^n(x)|$ is bounded from above by the $L^q$- and $W^{k,q}$-norms of $\{v^m\}_{m=n-s}^{n}$. By Corollary \ref{prop.vn.bound} (with $\bar k = s+1$), $v^{n}\in  \Cc^{s+2,\alpha}_{\text{unif}}(\R^d) \subset \Cc^{s+2}(\R^d)$. With $D_x v^n, D_x^2v^{n}\in \Cc^{s}(\R^d)$ and $s>d/2$, Lemma \ref{lm.infinity.emded} implies
	\be\label{eq.infinity.hnorm}
	\begin{aligned}
		|D_x v^n(x)|+|D^2_x v^n(x)|\leq C  \|v^n\|_{W^{s+2,2}(B_1(x))}, 
	\end{aligned}
	\ee
	where $C>0$ is a constant that depends on only $s$ and $d$. 
	
	As Corollary \ref{prop.vn.bound} (with $\bar k = s+1$) also gives $v^{m}\in\Cc^{(m+1)\wedge (s+2)}(\R^d)$ for all $1\le m\le n$, by recalling the notation in \eqref{eq.hat.notation} and \eqref{eq.h}, for all $2\le m\leq n$ we define
	\begin{equation}\label{fmbm}
		f^m(\cdot):= \hat{r}(\cdot , D_x v^{m-1}(\cdot))-\hat{\Hc}(\cdot,D_x v^{m-1}(\cdot)),\quad  b^m(\cdot):=\hat{b}(\cdot,D_x v^{m-1}(\cdot)).
	\end{equation}
	For each $2\le m\le n$, as $D_x v^{m-1}\in  \Cc^{(m-1)\wedge (s+1)}(\R^d)$, by Lemma~\ref{lemma.lips.xy} (i) (with $k=(m-1)\wedge (s+1)$ and $p=D_x v^{m-1}$), we have $b^m\in \Cc^{(m-1)\wedge (s+1)}(\R^d)$. Since $v^m$ satisfies \eqref{v^n-1 to v^n} (recall Corollary \ref{prop.vn.bound}), the conditions $b^m\in \Cc^{(m-1)\wedge (s+1)}(\R^d)$, $\sigma\in\Cc^{s+1}(\R^d)$ (as $\Lambda_{\lfloor d/2 \rfloor+2}<\infty$), and \eqref{eq.assume.ellip} allow us to apply Lemma \ref{lm.hnorm.estimate} to $v^m$. This implies that for any $k\leq (m-1)\wedge (s+1)$ and $(q,\eta)\in [1,\infty)\times \R_+$, 
	\be\label{eq.vn.fn}  
	\begin{aligned}
		\|v^m\|_{W^{k+2,q}(B_\eta(x))} &\leq  H_k\left(1+\|b^m\|_{W^{k,A(k,q)}(B_{R(k,\eta)}(x))}\right)^{Q(k,q)}\\
		&\hspace{0.2in}\cdot \left(\|v^m\|_{L^{A(k,q)}(B_{R(k,\eta)}(x))} +\|f^m\|_{W^{k,A(k,q)}(B_{R(k,\eta)}(x))}\right),
	\end{aligned}
	\ee
	where $H_k>0$ is a constant depending on only $k$, $\Lambda_k$, $d$, $q$, $\eta$, and $\rho$. Furthermore, by taking
	\begin{equation}\label{bar values}
		\bar q := A(k,q)\ge q\quad \hbox{and}\quad \bar \eta := R(k,\eta)\ge 2\eta,
	\end{equation}
	we deduce from Lemma \ref{lm.fn.vn-1} (with $p=D_x v^{m-1}$) that 
	\be\label{eq.fn.vn-1}  
	\max\left\{\|f^m\|_{W^{k,\bar q}(B_{\bar\eta}(x))},\ \|b^m\|_{W^{k,\bar q}(B_{\bar\eta}(x))}\right\}  \leq C_{k} \left(1+\|v^{m-1}\|_{W^{k+1,a_k \bar q }(B_{\bar \eta}(x))}\right)^{a_k},
	\ee
	where we take $a_k:=  (k+1)2^{k+1}$ and $C_k>0$ is a constant depending on only $k, \Lambda_k, \lambda, d, q$, and $\eta$. By plugging \eqref{eq.fn.vn-1} into \eqref{eq.vn.fn}, we obtain
	\be\label{eq.iter.n} 
	\begin{aligned}
%	&\hspace{-0.15in}	
\|v^m\|_{W^{k+2,q}(B_\eta(x))} 
	&\leq {H}_k \big(1+C^{1/{a_k}}_k\big)^{a_k Q(k,q)}\Big(1+\|v^{m-1}\|_{W^{k+1,a_kA(k,q) }(B_{R(k,\eta)}(x))}\Big)^{a_kQ(k,q)} \\
		&\hspace{0.15in}\cdot \big(1+C_k^{1/{a_k}}\big)^{a_k}\Big(1+\|v^m\|_{L^{A(k,q)}(B_{R(k,\eta)}(x))} +\|v^{m-1}\|_{W^{k+1,a_kA(k,q) }(B_{R(k,\eta)}(x))}\Big)^{a_k} \\
		&\hspace{-0.2in}\leq \widetilde{H}_k\Big(1+\|v^m\|_{L^{A(k,q)}(B_{R(k,\eta)}(x))} +\|v^{m-1}\|_{W^{k+1,a_kA(k,q) }(B_{R(k,\eta)}(x))}\Big)^{a_k(Q(k,q)+1)}, 
	\end{aligned}
\ee 
	where $\widetilde{H}_k:= {H}_k(1+C^{1/{a_k}}_k)^{a_k(Q(k,q)+1)}>0$ is finite and depends on only $k$, $\Lambda_k$, $\lambda$, $d$, $q$, $\eta$, and $\rho$. 
	
	Now, set $(k_0,q_0,\eta_0) := (s,2,1)$ and define $\{(k_i,q_i,\eta_i,\theta_i)\}_{i=1}^s$ by 
	\begin{align*}
		&k_i := k_{i-1}-1,\qquad q_i := a_{k_{i-1}} A(k_{i-1},q_{i-1})\ge A(k_{i-1},q_{i-1}),\\ 
		&\eta_i := R(k_{i-1},\eta_{i-1})\ge 2\eta_{i-1},\qquad \theta_i := a_{k_{i-1}} (Q(k_{i-1},q_{i-1})+1), 
	\end{align*}
	for all $i=1,\ldots,s$. It then follows from \eqref{eq.infinity.hnorm} and \eqref{eq.iter.n} that 
	\begin{align*}
		%&\hspace{-0.15in} 
		&|D_x v^n(x)|+|D^2_x v^n(x)|
		\le C  \|v^n\|_{W^{s+2,2}(B_1(x))}\notag
		\le C \widetilde{H}_s\Big(1+\|v^n\|_{L^{q_1}(B_{\eta_1}(x))} +\|v^{n-1}\|_{W^{s+1,q_1 }(B_{\eta_1}(x))}\Big)^{\theta_1}\\
		&\le C \widetilde{H}_s\big(1+ (\widetilde H_{s-1})^{1/\theta_2}\big)^{\theta_1\theta_2}
		%&\quad 
		\cdot \Big(1+\|v^n\|_{L^{q_1}(B_{\eta_1}(x))} + \|v^{n-1}\|_{L^{q_2}(B_{\eta_2}(x))} + \|v^{n-2}\|_{W^{s,q_2 }(B_{\eta_2}(x))}\Big)^{\theta_1\theta_2}, 
	\end{align*}
	where the second inequality follows by taking $(m,k,q,\eta) = (n,k_0,q_0,\eta_0)$ in \eqref{eq.iter.n}, while the third inequality holds similarly by taking $(m,k,q,\eta) = (n-1,k_1,q_1,\eta_1)$ in \eqref{eq.iter.n}. By continuing this procedure (i.e., taking $(m,k,q,\eta) = (n-i,k_i,q_i,\eta_i)$ in \eqref{eq.iter.n} for all $i=2,\ldots,s-1$), we obtain
	\be\label{eq.vn.h0norm0} 
	\begin{aligned}
		&|D_x v^n(x)|+|D^2_x v^n(x)|
		%&\hspace{0.15in} 
		\le H^*\bigg(1+\sum_{i=0}^{s-1} \|v^{n-i}\|_{L^{q_{i+1}}(B_{\eta_{i+1}}(x))} + \|v^{n-s}\|_{W^{2,q_{s}}(B_{\eta_{s}}(x))}\bigg)^{\theta_1\theta_2\ldots\theta_{s}},
	\end{aligned}
\ee 
	where $H^* := C \widetilde H_s \Pi_{i=1}^{s-1}\big(1+(\widetilde H_{s-i})^{1/\theta_{i+1}}\big)^{\theta_1\ldots\theta_{i+1}}>0$ is finite and depends on only $s$, $\Lambda_s$, $\lambda$, $d$, and $\rho$. 
	
	Step 2. We will show that the right-hand side of \eqref{eq.vn.h0norm0} can be made independent of $x\in\R^d$. By Lemma~\ref{lem:V^pi bdd} and Corollary~\ref{coro.PIA.works},  
	\be\label{eq.thm1.0}  
	\begin{aligned}
	\|v^n\|_{\Cc^0(\R^d)} & \leq   \max\{\|v^1\|_{\Cc^0(\R^d)},\big(\|r\|_{\Cc^0(\R^d)}+\lambda|\ln(\Leb (U))|\big)/\rho\}=: M<\infty,\quad \forall n\in\N.
	\end{aligned}
	\ee
	Then, by taking $(m,k,q,\eta) = (n-s,0,q_s,\eta_s)$ in \eqref{eq.vn.fn} and recalling \eqref{fmbm}, we get
	\be\label{eq.vn.h0norm0'''}
	\begin{aligned}
		%&\hspace{-0.15in}
		\| v^{n-s}\|_{ W^{2,q_{s}}(B_{\eta_{s}}(x)) }
		 &\leq {H}_0 \Big(1+\|b^{n-s}\|_{L^{A(0, q_{s})}(B_{R(0, \eta_{s})}(x))} \Big)^{Q(0,q_{s})} \\
		&\quad \cdot \Big( \| v^{n-s}\| _{ L^{A(0, q_{s})}(B_{R(0,\eta_{s})}(x))} + \|  f^{n-s}\|_{L^{A(0, q_{s})}(B_{R(0,\eta_{s})}(x))} \Big) \\
		&\hspace{-0.2in}\leq {H}_0 \Big(1+\|b\|_{\Cc^0(\R^d)}\big(\Leb(B_{R(0,\eta_{s})}(x))\big)^{\frac{1}{A(0, q_{s})}} \Big)^{Q(0, q_{s})}  \\
		&\hspace{-0.2in}\quad \cdot \big(\Leb(B_{R(0,\eta_{s})}(x))\big)^{\frac{1}{A(0, q_{s})}}\Big(M + \|r\|_{\Cc^0(\R^d)} +\|\hat{\Hc}(\cdot,D_x v^{n-s-1}(\cdot))\|_{\Cc^0(\R^d)}  \Big). 
	\end{aligned}
	\ee 
	As $\Leb(B_{R(0,\eta_{s})}(x))$ is a constant independent of $x\in\R^d$, we deduce from Corollary~\ref{coro:H bdd} that the above inequality can be turned into
	$
	\| v^{n-s}\|_{ W^{2,q_{s}}(B_{\eta_{s}}(x)) } \le  C^* \big(1 + \ln\big(1+\|D_x v^{n-s-1}\|_{\Cc^0(\R^d)}\big)\big), 
	$
	where $C^*>0$ is finite and depends on only $s$, $\Lambda_s$, $\lambda$, $d$, $\rho$, $\ell$, $\Leb(U)$, $\Theta$, $\zeta$, and $\alpha$. Thus, \eqref{eq.vn.h0norm0}  becomes
	\be\label{eq.key}
	\begin{aligned}
	%	&\hspace{-0.15in}
		&|D_x v^n(x)|+|D^2_x v^n(x)| \\
		&\leq H^*\bigg(1+M \sum_{i=1}^{s} \Leb(B_{\eta_{i}}(x))^{1/q_{i}}
	%	 &\hspace{0.5in} 
		 + C^* \left(1 + \ln\Big(1+\|D_x v^{n-s-1}\|_{\Cc^0(\R^d)}\Big)\right) \bigg)^{\theta_1\theta_2\ldots\theta_{s}} \\
		&\leq \widetilde C^*\left(1 + \ln\Big(1+\|D_x v^{n-s-1}\|_{\Cc^0(\R^d)}\Big)\right)^{\theta_1\theta_2\ldots\theta_{s}},
	\end{aligned}
\ee 
	where $\widetilde C^*>0$ is a constant depending on only $s$, $\Lambda_s$, $\lambda$, $d$, $\rho$, $\ell$, $\Leb(U)$, $\Theta$, $\zeta$, and $\alpha$.
	
	Step 3. We are now ready to prove the desired result \eqref{eq.thm.uniform}. As we already know from \eqref{eq.thm1.0} that $\sup_{n\in\N}\|v^n\|_{\Cc^0(\R^d)}<\infty$, it suffices to show $\sup_{n\in\N}\|D_x v^n\|_{\Cc^1(\R^d)}<\infty$. 
	By the arbitrariness of $x\in\R^d$ in \eqref{eq.key} and the fact $\|\cdot\|_{\Cc^0(\R^d)}\le \|\cdot\|_{\Cc^1(\R^d)}$, we have 
	\begin{align}\label{eq.thm.key} 
		\|D_x v^n\|_{\Cc^1(\R^d)}&\leq \widetilde C^*\left(1 + \ln\Big(1+\|D_x v^{n-s-1}\|_{\Cc^1(\R^d)}\Big)\right)^{\theta_1\theta_2\ldots\theta_{s}},\quad \forall n\ge s+2.
	\end{align}
	Note that $\phi(z) := \widetilde C^*\left(1 + \ln(1+z)\right)^{\theta_1\theta_2\ldots\theta_{s}}$, for $z\ge 0$, satisfies $\phi(0)=\widetilde C^*>0$ and is strictly increasing and sublinear (i.e., $\phi(z)/z\to 0$ as $z\to\infty$). Hence,
	$
	z_0:=\sup\{ z\ge 0: z\leq \phi(z) \} <\infty.
	$
	Consider
%	\be\label{eq.key.zstar} 
$	z^*:=\max\left\{\max_{1\leq n\leq s+1}\|D_x v^n\|_{\Cc^1(\R^d)},\ z_0\right\}<\infty.$
%	\ee
	By definition, $\|D_x v^n\|_{\Cc^1(\R^d)}\leq z^*$ for $n=1,\ldots,s+1$. Moreover, by \eqref{eq.thm.key}, 
	\be\label{s+2 to 2(s+1)} 
	\begin{aligned}
		\max_{s+2\leq n\leq 2(s+1)}\|D_x v^n\|_{\Cc^1(\R^d)} &\leq  \widetilde C^*\left(1 + \ln\Big(1+\max_{1\leq n\leq s+1}\|D_x v^{n}\|_{\Cc^1(\R^d)}\Big)\right)^{\theta_1\theta_2\ldots\theta_{s}} \\
		&= \phi\Big(\max_{1\leq n\leq s+1}\|D_x v^{n}\|_{\Cc^1(\R^d)}\Big) \leq \phi(z^*)\le z^*,
	\end{aligned}
\ee 
	where the last inequality follows from $z^*\ge z_0$ and the definition of $z_0$. That is, 
	$\|D_x v^n\|_{\Cc^1(\R^d)}\leq z^*$ $\text{for}\ n=s+2,\ldots,2(s+1).$
	By repeating the argument in \eqref{s+2 to 2(s+1)}, we can show 
	$
	\|D_x v^n\|_{\Cc^1(\R^d)} \le z^*\ \hbox{for}\ n= j(s+1)+1,\ldots, (j+1)(s+1),\ \hbox{with}\ j=2,3,\ldots .
	$
	We then conclude $\sup_{n\in\N}\|D_x v^n\|_{\Cc^1(\R^d)}\le z^*<\infty$.
\end{proof}

As pointed out in Remark~\ref{rem:detour}, the ``grand plan'' in Section~\ref{subsec:grand plan}, followed closely by the proof above, seems an overly complicated detour. Such a detour might be necessary as it turns out.
%It turns out that in order to reach the final conclusion, i.e., $\sup_{n\in\N} \| D_x v^n\|_{\Cc^1(\R^d)}<\infty$, such a detour might be necessary. 

\begin{Remark}\label{rem:detour necessary}
	It is tempting to prove $\sup_{n\in\N} \| D_x v^n\|_{\Cc^1(\R^d)}<\infty$ directly in the space $\Cc^1(\R^d)$. By using a Schauder estimate (precisely for the second inequality below), one gets
	\begin{align}\label{eq.sc.try0} 
		\|D_x v^n\|_{\Cc^{1}(\R^d)}\le \|D_x v^n\|_{\Cc^{1, \alpha}_{\text{unif}}(\R^d)} &\leq K_n \left(1+\|\hat{\Hc}(\cdot, D_x v^{n-1}(\cdot))\|_{\Cc^{0, \alpha}_{\text{unif}}(\R^d)}\right)\notag\\
		&\le K_n \left(1+\|\hat{\Hc}(\cdot, D_x v^{n-1}(\cdot))\|_{\Cc^{1}(\R^d)}\right),
	\end{align} 
	where $K_n>0$, the usual constant in a Schauder estimate, shall take the form $K_n = g(\|D_x v^{n-1} \|_{C^{0, \alpha}_{\text{unif}}(\R^d)})$ for some increasing $g:\R_+\to \R_+$, in the present context of policy iteration. 
	Ideally, if $\{K_n\}_{n\in\N}$ has a uniform upper bound and $\|\hat{\Hc}(\cdot, D_x v^{n-1}(\cdot) )\|_{\Cc^{1}(\R^d)}$ grows sublinearly in $\|D_x v^{n-1}\|_{\Cc^{1}(\R^d)}$, the right-hand side of \eqref{eq.sc.try0} will grow sublinearly in $\|D_x v^{n-1}\|_{\Cc^{1}(\R^d)}$, similarly to \eqref{eq.thm.key}. The estimates below \eqref{eq.thm.key} can then be applied to get $\sup_{n\in\N} \| D_x v^n\|_{\Cc^1(\R^d)}<\infty$. This argument, however, breaks down because $\|\hat{\Hc}(\cdot, D_x v^{n-1}(\cdot) )\|_{\Cc^{1}(\R^d)}$ grows at least linearly in $\|D_x v^{n-1}\|_{\Cc^{1}(\R^d)}$ (see  Remark~\ref{rem:no sublinear growth}).

	That is to say, the true advantage of our ``grand plan'' is that it allows us to control $\|D_x v^n\|_{\Cc^1(\R^d)}$ by the $\Cc^0(\R^d)$-norm of $\hat{\Hc}(\cdot,D_x v^{n-s-1}(\cdot))$, instead of its $\Cc^1(\R^d)$-norm (see \eqref{eq.vn.h0norm0} and \eqref{eq.vn.h0norm0'''}),  so that the crucial sublinear growth can be imported from Corollary~\ref{coro:H bdd}. 
\end{Remark}

Now, we are ready to present the main result of this paper: with appropriate regularity on $r$, $b$, $\sigma$, and the action space $U$, the PIA \eqref{PIA} yields the optimal value and an optimal relaxed control. 

\begin{Theorem}\label{thm.verification.limit}
	Let $\Lambda_{\lfloor d/2 \rfloor+2}<\infty$ in \eqref{eq.parameters.m} and assume \eqref{eq.assume.ellip}. Suppose additionally that Assumptions \ref{assume.u.lips} and \ref{assume.U.cone} hold. Then, the limiting function $v^*$ in the PIA \eqref{PIA}, defined as in \eqref{v^*}, satisfies
	\begin{enumerate}[\normalfont(i)]
\item $v^* = V^*$ on $\R^d$ and $\pi^*(x,u):=\Gamma(x,D_x v^*(x),u)\in\Ac_M$ is an optimal relaxed control for \eqref{V^*};
\item $v^*$ is the unique solution in $ \Cc^2(\R^d)$ to the HJB equation \eqref{eq.HJB.new}; 
\item
		$v^*\in \Cc^{\lfloor d/2 \rfloor+3,\alpha}_{\text{unif}}(\R^d)$ for all $0<\alpha<1$.
	\end{enumerate}
\end{Theorem}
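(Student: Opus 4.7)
My plan is to upgrade the pointwise convergence of $\{v^n\}$ to $\Cc^2_{\text{loc}}$, pass to the limit in \eqref{v^n-1 to v^n} to obtain a classical HJB solution, run a verification argument for (i) and (ii), and finally bootstrap elliptic regularity for (iii).

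For the convergence upgrade, Proposition \ref{thm.the.uniformbound} gives $M:=\sup_n \|v^n\|_{\Cc^2(\R^d)}<\infty$, so $\{D_x v^n\}$ is uniformly equi-Lipschitz. To obtain strong convergence of $D^2_x v^n$, I would bootstrap to a uniform $\Cc^{2,\alpha}_{\text{unif}}$ bound. Since $D_x v^{n-1}\in \Cc^{0,1}_{\text{unif}}(\R^d)$ uniformly in $n$, Lemma \ref{lemma.lips.xy}(ii) with $k=0$ controls $\|\hat b(\cdot,D_x v^{n-1})\|_{\Cc^{0,\alpha}_{\text{unif}}}$ and $\|(\hat r-\hat\Hc)(\cdot,D_x v^{n-1})\|_{\Cc^{0,\alpha}_{\text{unif}}}$ uniformly in $n$. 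Applying the interior Schauder estimate of Lemma \ref{lemma.pde.solution} with $k=0$ to the linear PDE \eqref{v^n-1 to v^n}, together with the a priori $L^\infty$ bound on $v^n$, yields $\sup_n \|v^n\|_{\Cc^{2,\alpha}_{\text{unif}}(\R^d)}<\infty$. Combined with the monotone pointwise convergence $v^n\uparrow v^*$ from Corollary \ref{coro.PIA.works}, Arzela--Ascoli then forces $v^n\to v^*$ in $\Cc^2_{\text{loc}}(\R^d)$; in particular $v^*\in\Cc^2(\R^d)$ with $\|v^*\|_{\Cc^2(\R^d)}\le M$, so $D_x v^*\in\Cc^{0,1}_{\text{unif}}(\R^d)$.

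To identify $v^*$ as an HJB solution and prove (i)--(ii), I would pass to the limit in \eqref{v^n-1 to v^n}. The Gibbs formula \eqref{eq.gibbs.pi} makes $\hat b(x,\cdot)$ and $(\hat r-\hat\Hc)(x,\cdot)$ continuous in $y$, so the local uniform convergence of $D_x v^{n-1}$ and $D^2_x v^n$ produces $\Lc^{\pi^*}v^*+\hat r(\cdot,D_x v^*)-\hat\Hc(\cdot,D_x v^*)=0$, which by the defining property of $\Gamma$ is exactly \eqref{eq.HJB.new}. A standard verification argument then identifies $v^*=V^*$: for any $\pi\in\Ac$, It\^o's formula applied to $e^{-\rho t}v^*(X_t^\pi)$ (legitimate since $v^*\in\Cc^2$ with bounded derivatives), the HJB inequality, and letting $T\to\infty$ using boundedness of $v^*$, together give $v^*(x)\ge V^\pi(x)$. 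Strict concavity in $\varpi$ of the functional inside the supremum in \eqref{eq.HJB.new} makes $\pi^*(x,u)=\Gamma(x,D_x v^*(x),u)$ the unique pointwise maximizer, so $v^*(x)=V^{\pi^*}(x)$, hence $v^*=V^*$; admissibility $\pi^*\in\Ac_M$ follows from Proposition \ref{prop.sde} applied with $p=D_x v^*\in \Cc^{0,1}_{\text{unif}}(\R^d)$. Uniqueness of a $\Cc^2$ solution of \eqref{eq.HJB.new} is immediate: any such solution must equal $V^*$ by the same verification.

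For (iii) I would bootstrap via Proposition \ref{prop.Vpi}. Since $v^*=V^{\pi^*}$ with $\pi^*(x,u)=\Gamma(x,D_x v^*(x),u)$, starting from $D_x v^*\in\Cc^{1,\alpha}_{\text{unif}}(\R^d)$ and applying Proposition \ref{prop.Vpi} with $k=1$ (using $\Lambda_2<\infty$) gives $v^*\in\Cc^{3,\alpha}_{\text{unif}}(\R^d)$; iterating a further $\lfloor d/2\rfloor$ times until the hypothesis $\Lambda_{\lfloor d/2\rfloor+2}<\infty$ is exhausted yields $v^*\in\Cc^{\lfloor d/2\rfloor+3,\alpha}_{\text{unif}}(\R^d)$. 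The main technical obstacle, in my view, is the first step: Proposition \ref{thm.the.uniformbound} only delivers uniform $\Cc^2$ (an $L^\infty$ bound on $D^2_x v^n$), which on its own gives neither equicontinuity of $D^2_x v^n$ nor enough convergence to pass to the limit in the PDE. The upgrade to a uniform $\Cc^{2,\alpha}$ estimate is what makes the passage valid, and that upgrade is possible precisely because Lemma \ref{lemma.lips.xy}(ii) together with the Lipschitz assumption on $b,r$ controls the PDE coefficients in $\Cc^{0,\alpha}_{\text{unif}}$ uniformly across iterations.
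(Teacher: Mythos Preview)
Your approach matches the paper's: compactness from the uniform $\Cc^2$ bound, passage to the limit in \eqref{v^n-1 to v^n} to obtain the HJB equation, verification, and bootstrap for (iii). The one real difference is how $\Cc^2_{\text{loc}}$ convergence is obtained. The paper applies Arzel\`a--Ascoli only at the $\Cc^1$ level and then asserts that $D^2_x v^n$ ``must also converge uniformly'' by inspecting \eqref{v^n-1 to v^n}; taken literally this only yields convergence of the scalar $\tr(\sigma\sigma' D^2_x v^n)$, so the step is terse. Your explicit upgrade to a uniform $\Cc^{2,\alpha}_{\text{unif}}$ bound---feeding $\sup_n\|D_x v^{n-1}\|_{\Cc^{0,1}_{\text{unif}}}<\infty$ through Lemma~\ref{lemma.lips.xy}(ii) into the Schauder estimate of Lemma~\ref{lemma.pde.solution}---is cleaner and makes the passage rigorous. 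For (iii) the paper achieves the same bootstrap by restarting the PIA with $v^0=v^*$ and invoking Corollary~\ref{prop.vn.bound}, which is precisely your iteration of Proposition~\ref{prop.Vpi} in different packaging. One small point worth flagging: in the verification $v^*\ge V^\pi$, the entropy $-\lambda\int_U\pi_t\ln\pi_t\,du$ is bounded above (by \eqref{eq.kl.lbound}) but not below for general $\pi\in\Ac$, so taking $T\to\infty$ needs a touch more care than ``boundedness of $v^*$''; the paper isolates this term and treats it via Fatou.
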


\begin{proof}
	Consider $\{v^n\}_{n\in\N}$ in the PIA \eqref{PIA} and recall from Corollary~\ref{coro.PIA.works} that its limit $v^*$ is well-defined. For any compact subset $E$ of $\R^d$, Proposition~\ref{thm.the.uniformbound} implies that $\{v^n\}_{n\in\N}$ (resp.\ $\{D_x v^n\}_{n\in\N}$) is uniformly bounded and equiconstinuous. By the Arzela--Ascoli theorem, $v^n$ (resp.\ $D_x v^n$) converges uniformly on $E$, up to a subsequence. %From the fact that $v^n\to v^*$ uniformly on $E$ and that its derivative $D_x v^n$ also converges uniformly on $E$, 
We then conclude $v^*\in \Cc^1(E)$ and $D_x v^n\to D_x v^*$. Recall from Corollary~\ref{prop.vn.bound} that $v^n$ satisfies \eqref{v^n-1 to v^n} for all $n\in\N$. By the uniform convergence of $v^n$ and $D_x v^n$ on $E$, as $n\to\infty$ in \eqref{v^n-1 to v^n}, we observe that $D_x^2 v^n$ must also converge uniformly on $E$. This in turn implies
%Then, from the fact that $D_x v^n\to D_x v^*$ uniformly on $E$ and that its derivative $D^2_x v^n$ also converges uniformly on $E$, we conclude that
 $D_x v^*\in \Cc^1(E)$ and $D^2_x v^n\to D^2_x v^*$. Hence, $v^*\in \Cc^2(E)$ and the limiting equation of \eqref{v^n-1 to v^n} on $E$ (as $n\to\infty$) is
	\begin{equation}\label{v^* eqn}
		\Lc^{\pi^*} v^*(x)+\hat{r}(x, D_x v^{*}(x))-\hat{\Hc}(x, D_x v^{*}(x))=0. 
	\end{equation}
	By the definitions of $\Lc^{\pi^*}$, $\hat r$, $\hat\Hc$, and $\Gamma$ (see \eqref{eq.elliptic.gibbs}, \eqref{eq.hat.notation}, \eqref{eq.h}, and \eqref{eq.gibbs.pi}), this readily shows that $v^*$ satisfies the HJB equation \eqref{eq.HJB.new} on $E$. 
	As $E\subset\R^d$ is arbitrary, we conclude that $v^*\in\Cc^2(\R^d)$ and it satisfies \eqref{eq.HJB.new} on $\R^d$. We then have $\pi^*\in\Ac_M$ by Proposition \ref{prop.sde}. Also, a modified verification argument (for tackling the entropy term) shows $v^*= V^*$ on $\R^d$. To see this, for any $x\in\R^d$ and $\pi\in\Ac$, by It\^{o}'s formula (applied to $v^*(X^\pi)$) and %the dynamics of $X^\pi$ in \eqref{eq.sde.new}, we deduce from It\^{o}'s formula and 
	the boundedness of $D_x v^*$ and $\sigma$, it holds for all $T>0$ that
	\begin{align}
		v^*(x) =&\E\bigg[e^{-\rho T}v^*(X^{\pi}_T)  \notag 
		+ \int_0^T e^{-\rho t} \bigg\{\rho v^*-  \left(\int_U b(\cdot,u)\pi_t(u)du\right)\cdot D_x v^*
		-\frac{1}{2}\tr(\sigma \sigma' D^2_x v^*)\bigg\} (X^{\pi}_t) dt \bigg] \notag \\
		\geq& \E[e^{-\rho T}v^*(X^{\pi}_T)] 
		+ \E\bigg[\int_0^T e^{-\rho t} \left( \int_U r(X_t^{\pi},u)\pi_t(u) du-\lambda \int_U \pi_t(u)\ln \pi_t(u)du \right) dt \bigg] \label{verifi} \\
		= & \E[e^{-\rho T}v^*(X^{\pi}_T)]+ \E\bigg[\int_0^T e^{-\rho t} \left( \int_U r(X_t^{\pi},u)\pi_t(u) du\right) dt\bigg] \notag  \\
		&- \lambda\E\bigg[\int_0^\infty e^{-\rho t} \left(\int_U \pi_t(u)\ln \pi_t(u)du \right) dt \bigg] %\notag \\
		+\lambda\E\bigg[\int_T^\infty e^{-\rho t} \left(\int_U \pi_t(u)\ln \pi_t(u)du \right) dt \bigg] \notag 
	\end{align}
	where the inequality holds as $v^*$ satisfies the HJB equation \eqref{eq.HJB.new}. As $T\to\infty$, since $v^*$ and $r$ are bounded and $\int_U \pi_t(u)\ln \pi_t(u)du$ is bounded from below by $-\ln(\Leb(U))$ (recall \eqref{eq.kl.lbound}), we deduce from the dominated convergence theorem and Fatou's lemma that
	\[
	v^*(x) \ge \E\bigg[\int_0^\infty e^{-\rho t} \left( \int_U r(X_t^{\pi},u)\pi_t(u) du-\lambda \int_U \pi_t(u)\ln \pi_t(u)du \right) dt \bigg]=V^\pi(x).
	\]
	Taking supremum over $\pi\in\Ac$ yields $v^*(x)\ge V^*(x)$. When we take $\pi=\pi^*\in\Ac_M$ in \eqref{verifi}, the inequality becomes an equality as $\pi^*(x,u)= \Gamma(x,D_x v^*(x),u)$ attains the supremum in \eqref{eq.HJB.new} (with $v$ taken to be $v^*$), thanks to \eqref{eq.gibbs.pi}. This gives 	
	$
	v^*(x) = \E[e^{-\rho T}v^*(X^{\pi^*}_T)]+ \E\big[\int_0^T e^{-\rho t} (\hat r- \hat\Hc)(X_t^{\pi^*}, D_xv^*(X_t^{\pi^*}) ) dt \big]. 
	$
	As $T\to\infty$, since $v^*$ and $(\hat r- \hat\Hc)(\cdot, D_x v^*(\cdot))$ are bounded (by Lemma~\ref{lemma.lips.xy} (i) with $k=0$), the dominated convergence theorem implies $v^*(x) = V^{\pi^*}(x)\le V^*(x)$. We then conclude $v^*=V^*$ on $\R^d$ and $\pi^*$ is an optimal relaxed control for \eqref{V^*}. For any solution $w\in \Cc^2(\R^d)$ to \eqref{eq.HJB.new}, the same arguments as above (with $w$ in place of $v^*$) entail $w=V^*$. Hence, $v^*=V^*$ is the unique solution in $\Cc^2(\R^d)$ to \eqref{eq.HJB.new}.
	
	As $v^*\in\Cc^2(\R^d)\subset \Cc^{1,1}_{\text{unif}}(\R^d)$, we take $v^0 = v^*$ in the PIA \eqref{PIA}, then  $v^n = v^*$ for all $n\in\N$. With $\Lambda_{\lfloor d/2 \rfloor+2}<\infty$,  Corollary~\ref{prop.vn.bound} readily implies $v^*\in \Cc^{\lfloor d/2 \rfloor+3,\alpha}_{\text{unif}}(\R^d)$ for all $0<\alpha<1$.
\end{proof}

\begin{Remark}\label{rem:complements}
	At the end of \cite[Section 6]{tang2021exploratory}, an open question is raised: under what conditions on $r$, $b$, and $\sigma$ does the (exploratory) HJB equation \eqref{eq.HJB.new} have a unique solution of the class $\Cc^3(\R^d)$? Theorem~\ref{thm.verification.limit} provides an answer to this: 
	Under the conditions that (i) $r(\cdot,u)$, $b(\cdot, u)$, and $\sigma(\cdot, u)$ belong to $\Cc^{\lfloor d/2 \rfloor+2}(\R^d)$, (ii) $r(x,\cdot)$ and $b(x,\cdot)$ are Lipschitz, (iii) $\sigma$ is uniformly elliptic, and (iv) the action space $U$ satisfies a uniform cone condition, $V^*$ in \eqref{V^*} is characterized as the unique classical solution to \eqref{eq.HJB.new} with desirable regularity, i.e., $V^*\in \Cc^{\lfloor d/2 \rfloor+3,\alpha}_{\text{unif}}(\R^d)\subset \Cc^3(\R^d)$. 
\end{Remark}

\subsection{Discussion: The Case of a Controlled Diffusion Coefficient}\label{subsec:only drift controlled}
When the diffusion coefficient of \eqref{eq.sde.new} is also controlled, i.e., 
$$ 
dX^\pi_t= \left( \int_U b(X^\pi_t,u)\pi_t(u)du \right) dt + \bigg( \int_U (\sigma \sigma')(X_t^\pi,u)\pi_t(u)du\bigg)^{1/2} \cdot dW_t,
$$
the Gibbs-form function $\Gamma$ in \eqref{eq.gibbs.pi} should take the following form: for any $(x,y,z,u)\in \R^d\times \R^d \times S^d\times U$, where $S^d$ denotes the set of all $d\times d$ symmetric matrices, 
\be\label{Gibbs general}
\Gamma(x,y,z,u):= \dfrac{\exp(\frac{1}{\lambda}[b(x,u)\cdot y +\frac{1}{2}\tr(\sigma\sigma' (x,u)z)+r(x,u)] )}{\int_U \exp(\frac{1}{\lambda}[b(x,u)\cdot y +\frac{1}{2}\tr(\sigma\sigma' (x,u)z)+r(x,u)] ) du}. 
\ee 
With sufficient regularity of $b$, $r$, and $\sigma$, a policy improvement result similar to Proposition~\ref{prop.policy.improvement} still holds; see e.g., \cite[Theorem 1]{JZ22}. The convergence of the PIA (i.e., \eqref{PIA} with $\Gamma(x, D_x v^{n-1}(x),u)$ replaced by $\Gamma(x, D_x v^{n-1}(x), D^2_x v^{n-1}(x), u)$) is nonetheless much more involved and remains largely open. As explained below, our ``grand plan'' does not easily apply to the present case. 

To obtain a uniform estimate for $\{v^n\}_{n\in\N}$ in the PIA \eqref{PIA}, the proof of Proposition~\ref{thm.the.uniformbound} above relies on the ``order reduction'' in  \eqref{eq.iter.n}: for $m\in\N$ large, the $W^{k+2,q}$-norm of $v^m$ is controlled by the $W^{k+1,q_1}$-norm of $v^{m-1}$, with $q_1\ge q$. By repeating this procedure, we ultimately control the $W^{k+2,q}$-norm of $v^m$ by the $W^{2,q_2}$-norm of $v^{m-k}$, with $q_2\ge q_1$. As shown in \eqref{eq.vn.h0norm0'''}, this $W^{2,q_2}$-norm of $v^{m-k}$ 
can be expressed in terms of the $\Cc^0(\R^d)$-norm of the entropy term $\hat\Hc$, 
which is properly controlled in Section~\ref{subsec:entropy}. This then provides a uniform estimate for $v^m$, with $m\in\N$ large enough.  

With a controlled diffusion coefficient, such ``order reduction'' no longer holds. As $\Gamma$ depends additionally on $z\in S^d$, 
$f^m$ and $b^m$ in \eqref{fmbm} now also depend on $D^2_x v^{n-1}(\cdot)$. The same estimation as in Lemma \ref{lm.fn.vn-1} then shows that the $W^{k,q}$-norms of $f^m$ and $b^m$ are controlled by the $W^{k,q'}$-norms of not only $D_x v^{n-1}$ but also $D^2_x v^{n-1}$, with $q'\ge q$. Thus, on the right-hand side of \eqref{eq.fn.vn-1}, the $W^{k+1,q'}$-norm of $v^{m-1}$ should now be the $W^{k+2,q'}$-norm. When plugging this into \eqref{eq.vn.fn}, we obtain an updated version of \eqref{eq.iter.n}, with the $W^{k+1,q'}$-norm of $v^{m-1}$ on the right-hand side replaced by its $W^{k+2,q'}$-norm. There is then {\it no} order reduction: on both sides of the updated \eqref{eq.iter.n}, we see a Sobolev norm of order $k+2$. %This leaves the ``grand plan'' halfway halted---
It is then unclear how Step 2 of the ``grand plan'' (see \eqref{L^q terms}) can be achieved.

%%%%%%%%%%%%%%%%%%%%%%%%%%%
%%%%%%%%%%%%%%%%%%%%%%%%%%%
%%%%%%%%%%%%%%%%%%%%%%%%%%%

\section{An Example of Optimal Consumption}\label{sec:example}
Take $d=\bar d=\ell=1$. Consider a risky asset $S$ given by $dS_t/S_t=(\mathfrak{r}+{\mu})dt+ a dW_t$, where $\mathfrak r>0$, ${\mu}\geq 0$, and $a>0$ are the riskfree rate, the risk premium, and the volatility constant, respectively. %At any time $t\ge 0$,
An agent invests a fixed proportion $0<\eta <1$ of his wealth in $S$. 
%and then needs to decide how much of the remaining wealth to consume. For the purpose of reinforcement learning (as described in Section~\ref{sec:intro}), the agent 
Then, he chooses to consume a proportion $u\in U:= [\underline c,1-\eta]$ of his wealth following a relaxed control $\pi\in\Ac$, where $\underline c>0$ is the subsistence consumption level. The resulting wealth process is 
\[
dZ^{\pi}_t = \left(\int_{U} (\mathfrak{r}+{\mu}{\eta}-u) Z^\pi_t \pi_t(u)du\right) dt +a {\eta} Z^\pi_t dW_t, \quad Z_0=z\in(0,\infty).
\]
%$U:= [a_0, a_1]$ with $ 0\leq a_0<a_1<\infty$, i.e., an interval and $a_0, a_1$ represents the lower and upper bounds for the consumption rates respectively.
The agent measures utility from consumption by an utility function $\mathfrak u:[0,\infty)\to\R$, assumed to be strictly increasing and concave.  
%$\mathfrak u(y) := -e^{-\alpha y}$, $y\ge 0$, for some fixed $\alpha>0$. 
The agent's goal is to achieve the optimal value 
\begin{equation}\label{frak V^*}
	\mathfrak V^*(z):=\sup_{\pi\in\Ac}\E_z\left[ \int_0^\infty e^{-\rho t} \left(  \int_U \mathfrak u (uZ^\pi_t)\pi_t(u)du-\lambda \int_U \ln(\pi_t(u))\pi_t(u) du \right) dt \right]
\end{equation}
by choosing an optimal $\pi^*\in\Ac$.  
%Let us explain in detail how Theorem~\ref{thm.verification.limit} can be applied to the present setting. 
By It\^{o}'s formula, the process $X^\pi_t:= \ln(Z^\pi_t)$, with $X_0=x:= \ln z\in \R$, satisfies \eqref{eq.sde.new} with $b(y,u) = b(u) := \mathfrak{r}+{\mu}{\eta} - \frac12 a^2\eta^2-u$ and $\sigma(y)\equiv a{\eta}$. Then, $\mathfrak V^*(z)$ can be expressed as $V^*(x)$ in \eqref{V^*}, 
%$$
%dX^{\pi}_t = \left(\tilde{r}+\tilde{\mu}\tilde{\eta}-\frac{1}{2}\tilde{\sigma}^2 \tilde{\eta}^2-\int_{a_0}^{a_1} u\pi_t(du)\right)dt +\tilde{\sigma}\tilde{\eta} dW_t. 
%$$
%\[
%V^*(x):=\sup_{\pi\in\Ac} \E_x\left[ \int_0^\infty e^{-\rho t} \left(  \int_U r(X^\pi_t,u)\pi_t(u)du-\lambda \int_U \ln(\pi_t(u))\pi_t(u) du \right) dt \right]
%\]
with $r(y,u) := \mathfrak u(u e^y)$. 
%$r(y,u) := -\exp({-\alpha u e^{y}})$. 

In the following, we consider three cases of the utility function: (i) $\mathfrak u(y) := -e^{-\alpha y}$, (ii) $\mathfrak u(y) := 1/(1+e^{-\alpha y})$, and (iii) $\mathfrak u(y) := \arctan(\alpha y)$, for a fixed $\alpha> 0$.\footnote{While $1/(1+e^{-\alpha y})$ (the sigmoid function) and $\arctan(\alpha y)$ are $S$-shaped on $\R$, they are concave on $[0,\infty)$, the domain we focus on.} Let us  check the conditions of Theorem~\ref{thm.verification.limit}. As $\sigma>0$ and $U=[\underline c,1-\eta]$, \eqref{eq.assume.ellip} and Assumption~\ref{assume.U.cone} hold trivially. In Case (i), with $r(y,u) = -\exp({-\alpha u e^{y}})$, a direct calculation shows $r_y(y,u)= \alpha e^{-\alpha ue^y}ue^y$ and $r_{yy}(y,u)=\alpha e^{-\alpha ue^y} (-\alpha u^2e^{2y}+ue^y)$, which implies $|r(y,u)|\leq 1$, $|r_y(y,u)|\leq e^{-1}$ and $|r_{yy}(y,u)|\leq 4 e^{-2} + e^{-1}$. Hence, $\sup_{u\in U}\|r(\cdot,u)\|_{\Cc^{2}(\R)}< \infty$. In Cases (ii) and (iii), with $r(y,u) = 1/(1+\exp({-\alpha u e^{y}}))$ and $r(y,u) = \arctan({\alpha u e^{y}})$, direct calculations similarly imply $\sup_{u\in U}\|r(\cdot,u)\|_{\Cc^{2}(\R)}< \infty$.
As $\sup_{u\in U}\|b(\cdot,u)\|_{\Cc^{2}(\R)}= \sup_{u\in [0,1-\eta]}\|b(\cdot,u)\|_{\Cc^{0}(\R)} = \mathfrak{r}+{\mu}{\eta} +a^2 {\eta}^2/2 + 1-\eta <\infty$ and $\sup_{u\in U}\|\sigma(\cdot)\|_{\Cc^{2}(\R)}=a {\eta}<\infty$, we conclude $\Lambda_2=\Lambda_{\lfloor d/2 \rfloor+2}<\infty$ for $d=1$. Finally, to verify Assumption~\ref{assume.u.lips}, by the definition of $b$, it suffices to show that $r$ is Lipschitz in $u$ uniformly in $y$. In Case (i), with $r_u(y,u) = \alpha e^{-\alpha ue^y}e^y = r_y(y,u)/u$, the estimate above for $r_y(y,u)$ gives $|r_u(y,u)|\leq e^{-1}/u \le e^{-1}/\underline{c}$, as desired. A uniform bound for $|r_u(y,u)|$ can be similarly derived in Cases (ii) and (iii). %, we conclude that Assumption~\ref{assume.u.lips} is fulfilled. 

As all conditions of Theorem~\ref{thm.verification.limit} are fulfilled, the PIA \eqref{PIA} can be used to find the optimal value function $V^*$ and the corresponding optimal relaxed consumption strategy $\pi^*\in\Ac$.  
Under the parameters $\underline c=\eta = 0.1$, $a=10$, $\mathfrak{r}+{\mu}{\eta} - \frac12 a^2\eta^2=\alpha=\lambda=1$ and $\rho=0.1$, the PIA \eqref{PIA}  takes the following concrete form: 
\begin{itemize}%[leftmargin=0.25in]
\item[1.] Take an initial input $v^0$ and set $\pi^1(x,u):=\frac{\exp( (1-u)D_x v^0+\mathfrak u(ue^x) )}{\int_{0.1}^{0.9}\exp( (1-\tilde u)D_x v^0+\mathfrak u(\tilde ue^x) )d\tilde u}$. 
\item[2.] For $n=1,2,...$, perform
\begin{itemize}%[leftmargin=0.15in]
\item {\bf Policy evaluation:} Compute $v^n=V^{\pi^n}$ by solving PDE \eqref{v^n-1 to v^n}, which is now 
\be\label{eq:eg.pdevn}  
\begin{split}
-0.1 v^n&+{D_x^2 v^n}/{2}\\
&\hspace{-0.3in}+ \int_{0.1}^{0.9}\left((1-u) D_x v^n(x)+\mathfrak u(ue^x)- \ln(\pi^n(x,u))\right) \pi^n(x,u)du = 0.
\end{split}
\ee 
\item{\bf Policy improvement:} Set $\pi^{n+1}(x,u)=\frac{\exp( (1-u)D_x v^n+\mathfrak u(ue^x) )}{\int_{0.1}^{0.9}\exp( (1-\tilde u)D_x v^n+\mathfrak u(\tilde ue^x) )d\tilde u}$.
\end{itemize}
\end{itemize}
When performing this algorithm for each of the three cases of $\mathfrak u$, we take two different initial guesses, $v^0(x)=\sin(x)$ and $v^0(x)=\frac{1}{1+x^2}$, and solve \eqref{eq:eg.pdevn} using finite difference methods. %, and compute $\pi^{n+1}$ using the newly-computed $v^n$ function. 
In addition, we solve the HJB equation \eqref{eq.HJB.new}, which now takes the form
\begin{align*}
	%&-V^*(x)+\frac{1}{2} D^2_x V^*(x)+\sup_{\varpi\in \Pc(U)} \int_{0.1}^{0.9} \Big( (1-u)D_x V^* -\exp(-u e^x) -\ln(\varpi(u)) \Big) \varpi(u)du \notag\\
	%=&
	-V^*(x)+\frac{1}{2} D^2_x V^*(x)+\ln\left(\int_{0.1}^{0.9}\exp\Big( (1-u)D_x V^* +\mathfrak u\left(u e^x\right)  \Big) du  \right)=0 \label{eq:eg.HJB},
\end{align*}
for $V^*$ using finite different methods again and compute $\pi^*$ by \eqref{pi^*}. Given all this, we compute the logarithmic values of the errors $\|V^*-v^n\|_{L^\infty(D)}$ and $\|\pi^*-\pi^n\|_{L^\infty(D\times U)}$ %, along with their , 
for $D=[-50, 50]$.\footnote{A bigger domain can be used, but it does not change the numerical results presented below.} Figures \ref{casei}, \ref{caseii} and \ref{caseiii} present the results, which show that $v^n\to V^*$ and $\pi^n\to\pi^*$ as $n$ increases.  It is worth noting that the two choices of the initial guess $v^0(x)$ are arbitrary. In fact, our experiments show that $v^0(x)$ can be taken (arbitrarily) to be many other functions without affecting the convergence results. That is to say, at least in this  example of optimal consumption, when all conditions of Theorem~\ref{thm.verification.limit}  are satisfied, our algorithm demonstrates certain robustness with respect to the utility function $\mathfrak u$ and the initial guess $v^0(x)$.   

Finally, let us point out that under the power utility function $\mathfrak u(y) := {y^{1-\gamma}}/{1-\gamma}$, for a fixed $1\neq\gamma>0$, the condition ``$\Lambda_2=\Lambda_{\lfloor d/2 \rfloor+2}<\infty$ for $d=1$'' fails to hold, as the reward function $r(y,u) = \mathfrak u(u e^y) = {u^{1-\gamma} e^{(1-\gamma)y}}/({1-\gamma})$ and its first and second derivatives in $y$ are all unbounded. Hence, Theorem~\ref{thm.verification.limit} cannot be applied here and it is unclear if the PIA theoretically converges. We additionally find that the above algorithm, when applied to the power utility case, is numerically unstable and does not easily compute $v^n$ and $V^*$ (let alone showing whether ``$v^n\to V^*$'' holds). It is of interest as future research to generalize Theorem~\ref{thm.verification.limit} to such a case and develop a suitable numerical scheme.

\begin{figure}[h!]
	\hspace{-0.8 cm}
	\includegraphics[width=18cm]{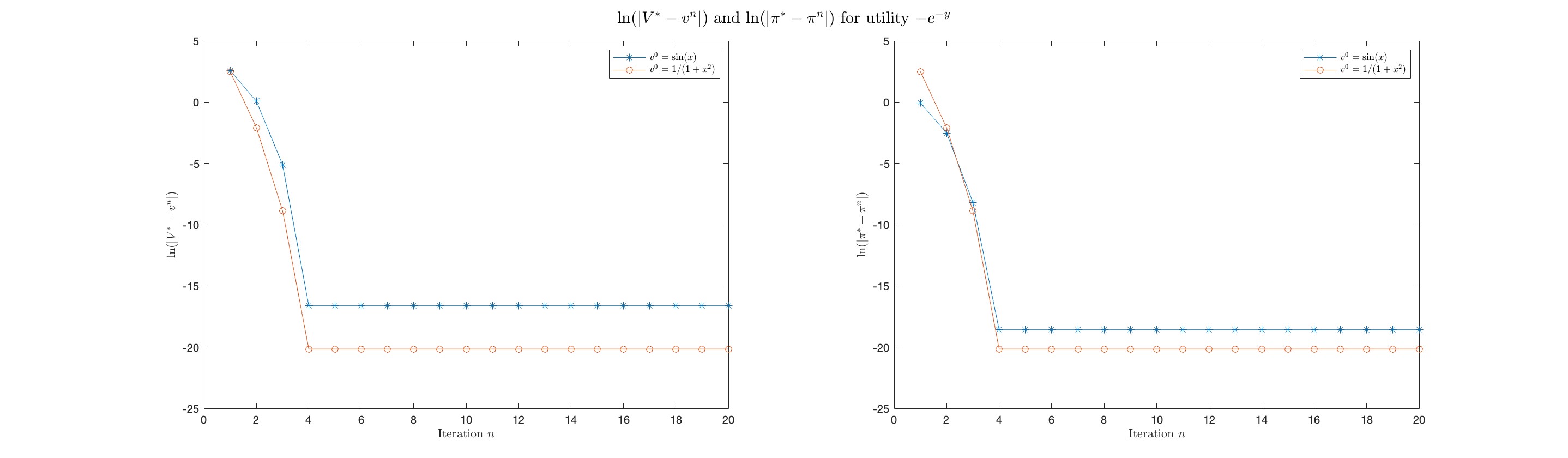}
	%\vspace{-0.3cm}
	\caption{Difference between $V^*$ and $v^n$ (i.e., $\ln(\|V^*-v^n\|_{L^\infty([-50, 50])})$ on the left panel) and difference between $\pi^*$ and $\pi^n$ (i.e., $\ln(\|\pi^*-\pi^n\|_{L^\infty([-50, 50]\times[0.1, 0.9])}$ on the right panel) against $n=1,2,\cdots,20$, under the utility function $\mathfrak u(y)=-e^{-y}$. The blue (resp.\ orange) curve with star (resp.\ circle) markers is computed under the initial guess $v^0(x)=\sin(x)$ (resp.\ $v^0(x)=1/(1+x^2)$). 
	%The $y$-axis represents $\ln(\|V^*-v^n\|_{L^\infty([-50, 50])})$ (left panel) and $\ln(\|\pi^*-\pi^n\|_{L^\infty([-50, 50]\times[0.1, 0.9])})$ (right panel). 
	Notice that the algorithm is set to stop once it reaches the tolerance of the finite difference solver.}\label{casei}
	\centering
\end{figure}

%\vspace{-1cm}
\begin{figure}[h!]
	\hspace{-0.8 cm}
	\includegraphics[width=18cm]{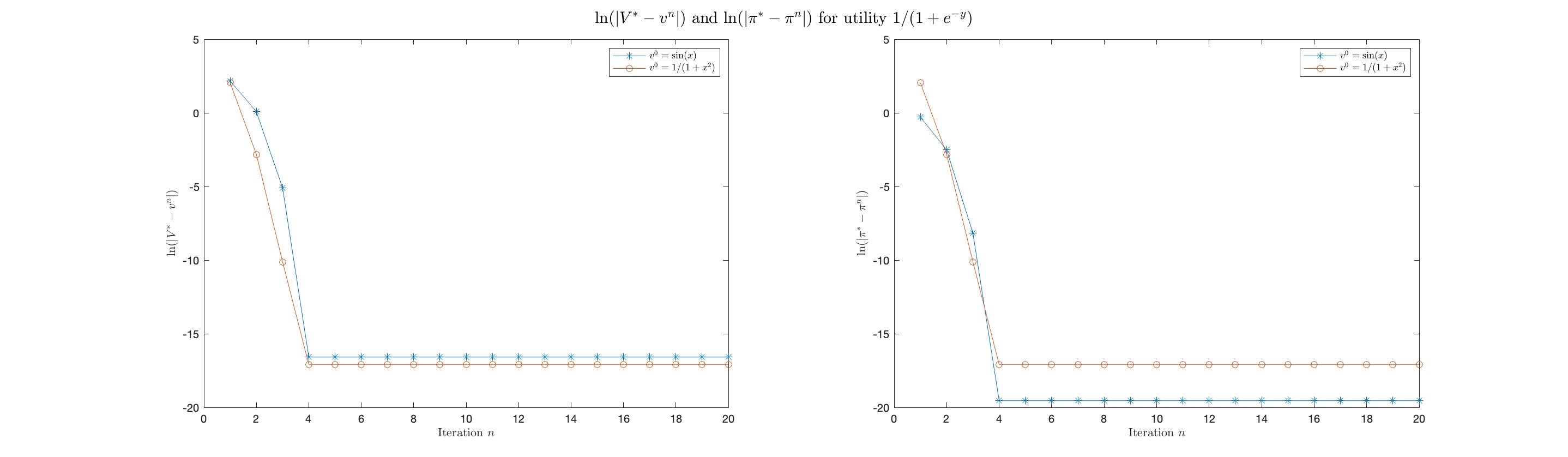}
%	\vspace{-0.3cm}
	\caption{Difference between $V^*$ and $v^n$ (i.e., $\ln(\|V^*-v^n\|_{L^\infty([-50, 50])})$ on the left panel) and difference between $\pi^*$ and $\pi^n$ (i.e., $\ln(\|\pi^*-\pi^n\|_{L^\infty([-50, 50]\times[0.1, 0.9])}$ on the right panel) against $n=1,2,\cdots,20$, under the utility function $\mathfrak u(y)=1/(1+e^{-y})$. The blue (resp.\ orange) curve with star (resp.\ circle) markers is computed under the initial guess $v^0(x)=\sin(x)$ (resp.\ $v^0(x)=1/(1+x^2)$). 
	%Difference between $V^*$ and $v^n$, as well as between $\pi^*$ and $\pi^n$, for $n=1,2,\cdots,20$, with $v^0(x)=1/(1+x^2)$ and $\sin(x)$ under the utility function $\mathfrak u(y)=1/(1+e^{-y})$.
	Notice that the algorithm is set to stop once it reaches the tolerance of the finite difference solver.
	}\label{caseii}
	\centering
	\vspace{-0.5cm}
\end{figure}

\begin{figure}[h!]
	\hspace{-0.8 cm}
	\includegraphics[width=18cm]{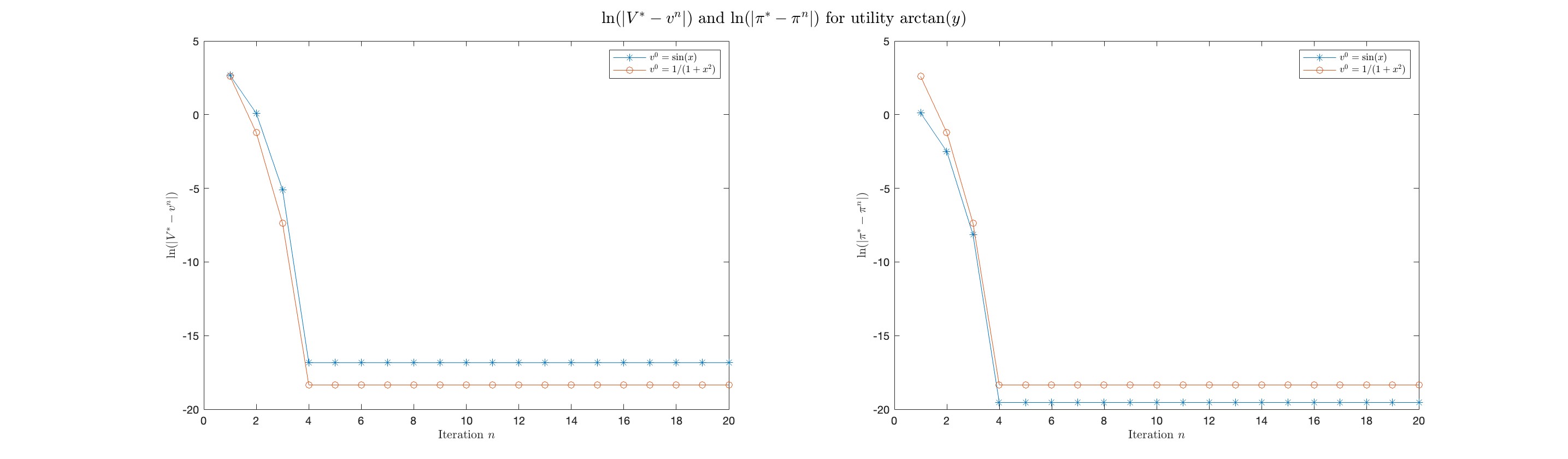}
%	\vspace{-0.3cm}
	\caption{Difference between $V^*$ and $v^n$ (i.e., $\ln(\|V^*-v^n\|_{L^\infty([-50, 50])})$ on the left panel) and difference between $\pi^*$ and $\pi^n$ (i.e., $\ln(\|\pi^*-\pi^n\|_{L^\infty([-50, 50]\times[0.1, 0.9])}$ on the right panel) against $n=1,2,\cdots,20$, under the utility function $\mathfrak u(y)=\arctan(y)$. The blue (resp.\ orange) curve with star (resp.\ circle) markers is computed under the initial guess $v^0(x)=\sin(x)$ (resp.\ $v^0(x)=1/(1+x^2)$). 
	%Difference between $V^*$ and $v^n$, as well as between $\pi^*$ and $\pi^n$, for $n=1,2,\cdots,20$, with $v^0(x)=1/(1+x^2)$ and $\sin(x)$ under the utility function $\mathfrak u(y)=\arctan(y)$. 
	Notice that the algorithm is set to stop once it reaches the tolerance of the finite difference solver.
	}
	\label{caseiii}
	\centering
%	\vspace{-0.5cm}
\end{figure}

\begin{appendix}

\section{Derivation of Lemma \ref{lemma.lips.xy}}\label{sec:appendix}

Recall \eqref{eq.parameters.m}--\eqref{eq.h}. We will prove (i) and (ii) in Lemma \ref{lemma.lips.xy} separately. %The following is the proof for part (i).
%Let us first present an estimate of the derivatives of $\hat b(\cdot, p(\cdot))$ and $(\hat{r}-\hat{\Hc})(\cdot,p(\cdot))$. 
%\begin{Lemma}\label{lemma.lips.xy'}	
%	Let $\Lambda_{k}<\infty$ in \eqref{eq.parameters.m} for some $k\in\N_0$. 
%	Given $p:\R^d\rightarrow \R^d$ of the class $\Cc^{k}(\R^d)$, the function $f(x,y,u) := r(x,u)-\Hc(x,y,u)$ (or $f(x,u) := b(x,u)$) fulfills the following: for any multi-index $a=(a_1,..,a_d)$ with $|a|_{l_1}=k$, 
%	\be\label{eq.lips.xy}
%	\left|D^a_x \hat{f}(x,p(x))\right|\leq C_k \left(1+|p(x)|^{(k+1)2^k}\right)\bigg(1+\sum_{|c|_{l_1}=0,\ldots,k}|D^c_x p(x)|\bigg)^k\quad \forall x\in\R^d,
%	\ee
%	where $C_k>0$ is a constant depending on only $\Lambda_k$, $k$, $\lambda$, and $d$.
%\end{Lemma}

\begin{proof}[Proof for Lemma \ref{lemma.lips.xy} (i)] 
It is sufficient to prove only \eqref{eq.lips.xy}, as the rest of the statement in part (i) directly follows. Also, we will only prove \eqref{eq.lips.xy} for 
	\be\label{f in step 2}
	f(x,y,u) = r(x,u)-\Hc(x,y,u),
	\ee
	as the case of $f=b(x,u)$ is similar and simpler.
	Let us first define % introduce some notation specifically for the present proof. First, we define 
	\begin{equation}\label{g}
		g=g(x,y,u):=(b(x,u)\cdot y+r(x,u))/{\lambda},\quad\forall x,y\in\R^d\ \hbox{and}\ u\in U.
	\end{equation}
	For any $k,m\in \N_0$, let $y\mapsto \mathfrak{p}^{k,m}(y)$ denote a generic 
	polynomial 
	of degree 
	$k$ or smaller, with coefficients being the products of the derivatives of $x\mapsto b(x,u),r(x,u)$ 
	of order $m$ or smaller. 
	It can be checked directly that for any $i,j=1,\ldots,d$ and  $k,\bar{k},m, \bar{m}\in \N_0$, 
	\be\label{eq.notation.polynomial}
	\begin{cases}
		\poly^{k, m}(y)\cdot \poly^{\bar{k},\bar{m}}(y)=\poly^{k+\bar{k}, m\vee \bar{m}}(y)=\poly^{k+\bar{k}, m+\bar{m}}(y),\\
		D_{y_i} g=\poly^{0,0}(y),\quad  D_{x_j} g=\poly^{1,1}(y)\\
		D_{y_i}\poly^{0,m}(y)=0, \quad 
		D_{y_i}\poly^{k+1,m}(y)=\poly^{k,m}(y),\quad  D_{x_j}  \poly^{k,m}(y)=\poly^{k,m+1}(y).
	\end{cases} 
	\ee 
	
	{\bf Step 1.} We will provide a generic formula for the derivatives of $(x,y)\mapsto \Gamma(x,y,u)$ in \eqref{eq.gibbs.pi}. Given $k,m\in \N_0$, for any multi-indices $a,c$ with $|a|_{l_1}=k$ and $|c|_{l_1}=m$, we claim that
	\be\label{eq.lips.pif}  
	D^c_x( D^a_y \Gamma)=\dfrac{\sum\Big( \poly^{m,m}(y) \exp(g) \big({\prod^{{2^{k+m}-1}}}\int_U \poly^{m,m}(y)\exp(g)du \big)\Big)}{(\int_U \exp(g)du)^{2^{k+m}}}.
	\ee
	Here, we do not write out the number of terms in the summation ``$\sum$" explicitly (but note that it is determined by only $k$, $m$ and $d$), while the superscript of ``$\prod$" denotes the number of terms in the product. We will use $\sum$ and $\prod$ in the same way throughout the proof. 
	
	To begin with, let us prove by induction on $k\in\N_0$ that for any multi-index $a$ with $|a|_{l_1}=k\geq 0$, 
	\be\label{eq.piy.induction} 
	D^a_y \Gamma=\dfrac{\sum \Big(\poly^{0,0}(y) \exp(g) \big(\prod^{2^k-1}\int_U \poly^{0,0}(y) \exp(g)du \big) \Big)}{(\int_U \exp(g)du)^{2^k}}. 
	\ee
	For $k=0$, \eqref{eq.piy.induction} holds as $\Gamma={\exp(g)}/{\int_U \exp(g)du}$ by definition. Suppose that \eqref{eq.piy.induction} holds for $k=n\in\N_0$ and we aim to establish \eqref{eq.piy.induction} for $k=n+1$. For any multi-index $a=(a_1,\ldots,a_d)$ with $|a|_{l_1}=n+1$, assume $a_1\geq 1$ without loss of generality (w.l.o.g.) and set $a':=(a_1-1,\ldots,a_d)$. As $|a'|_{l_1}=n$, the induction hypothesis implies $D^{a'}_y\Gamma =\texttt{(I)}/\texttt{(II)}$, with $\texttt{(I)}:=\sum \poly^{0,0}(y) \exp(g)\big(\prod^{2^n-1}\int_U \poly^{0,0}(y) \exp(g)du\big)$ and $\texttt{(II)}:=(\int_U \exp(g)du)^{2^n}$. Hence, 
	\be\label{eq.qoutient.derivterms} 
	D^a_y\Gamma =	D_{y_1}(D^{a'}_y\Gamma)=\left({\{\frac{\partial }{\partial y_1}\texttt{(I)}\}\cdot \texttt{(II)}- \{\frac{\partial }{\partial y_1}\texttt{(II)}\}\cdot \texttt{(I)}}\right)/{\texttt{(II)}^2}.
	\ee
	By a direct calculation and using \eqref{eq.notation.polynomial}, we have
	\be\label{eq.numer.induct} 
	\begin{aligned}
		\bigg\{\frac{\partial }{\partial y_1}\texttt{(I)}\bigg\}\cdot \texttt{(II)} 
		&=\bigg\{\sum \poly^{0,0}(y) \exp(g) (D_{y_1}g )\Big(\prod^{2^n-1}\int_U \poly^{0,0}(y)\exp(g)du\Big) \\
		&\qquad+ \sum \poly^{0,0}(y) \exp(g)\Big(\prod^{2^n-2}\int_U \poly^{0,0}(y)\exp(g)du\Big)\\
		&\qquad \cdot \Big( \int_U \poly^{0,0}(y)\exp(g)(D_{y_1}g ) du\Big) \bigg\}
		\cdot \texttt{(II)} \\
		&= \sum \poly^{0,0}(y) \exp(g)\Big(\prod^{2^{n+1}-1}\int_U\poly^{0,0}(y) \exp(g)du\Big),
	\end{aligned}
	\ee 
	where the last line stems from writing $\texttt{(II)}=( \int_U \exp(g)du)^{2^n}$ as $( \int_U \poly ^{0,0}(y)\exp(g)du)^{2^n}$. Similarly, 
	\be\label{eq.denom.induct}
	\begin{aligned}
		\bigg\{\frac{\partial }{\partial y_1}\texttt{(II)}\bigg\}\cdot \texttt{(I)} &=\left\{2^n \Big(\int_U \exp(g)du\Big)^{2^n-1} \Big( \int_U\exp(g) (D_{y_1}g )du \Big) \right\} \cdot \texttt{(I)}\\
		&= \sum \poly^{0,0}(y)\exp(g)\Big(\prod^{2^{n+1}-1}\int_U \poly^{0,0}(y) \exp(g)du\Big).
	\end{aligned}
	\ee
	Plugging \eqref{eq.numer.induct} and \eqref{eq.denom.induct} into \eqref{eq.qoutient.derivterms} yields \eqref{eq.piy.induction} with $k=n+1$. Thus, \eqref{eq.piy.induction} holds for all $k\in\N_0$. 
	
	We now prove \eqref{eq.lips.pif}. % by a further induction on $m\in\N_0$. 
	For $m=0$ and $k\in\N_0$, \eqref{eq.lips.pif} holds as it reduces to \eqref{eq.piy.induction}. Suppose that \eqref{eq.lips.pif} holds for $m=n\in\N_0$ and $k\in \N_0$, and we aim to prove \eqref{eq.lips.pif} for $m=n+1$ and $k\in \N_0$. For any multi-indices $a=(a_1,\ldots,a_d)$ and  $c=(c_1,\ldots,c_d)$ with $|a|_{l_1}=k$ and $|c|_{l_1}=n+1$, assume $c_1\geq 1$ w.l.o.g. and set $c':=(c_1-1,\ldots,c_d)$. As $|c'|_{l_1}=n$, the induction hypothesis gives $D^{c'}_x(D^a_y\Gamma) =\texttt{(I)}/\texttt{(II)}$ with $\texttt{(I)}:=\sum \big(\poly^{n,n}(y) \exp(g) \big(\prod^{2^{k+n}-1}\int_U \poly^{n,n}(y)\exp(g)du\big)\big)$ and $\texttt{(II)}:=(\int_U \exp(g)du)^{2^{k+n}}$. Thus, 
	\be\label{eq.qoutient.derivtermsx} 
	D^c_x(D^a_y\Gamma) = D_{x_1}(D^{c'}_xD^a_y\Gamma)=\frac{(\frac{\partial }{\partial x_1}\texttt{(I)})\cdot \texttt{(II)}- (\frac{\partial }{\partial x_1}\texttt{(II)})\cdot \texttt{(I)}}{\text{\texttt{(II)}}^2}.
	\ee
	A direct calculation shows that $\frac{\partial }{\partial x_1}\texttt{(I)}$ is equal to
	\bee
	\begin{aligned}
		&\sum \bigg[ \Big(D_{x_1} \poly^{n,n}(y)+\poly^{n,n}(y)(D_{x_1}g)\Big)\exp(g)\Big(\prod^{2^{k+n}-1}\int_U \poly^{n,n}(y)\exp(g)du\Big)  \\
		&\hspace{0.35in} +  \Big(\poly^{n,n}(y) \exp(g)\Big)\Big( \prod^{2^{k+n}-2}\int_U \poly^{n,n}(y)\exp(g)du \Big)
\cdot \int_U \Big(D_{x_1}\poly^{n,n}(y) +\poly^{n,n}(y)(D_{x_1}g)\Big)\exp(g) du \bigg].
	\end{aligned}
	\eee
	By \eqref{eq.notation.polynomial}, $D_{x_1} \poly^{n,n}(y) = \poly^{n,n+1}(y)$ and  $\poly^{n,n}(y)(D_{x_1}g) =\poly^{n+1,n+1}(y)$. Plugging this into the above gives 
	\be\label{eq.numer.inductx}
	\begin{aligned}
		&\bigg\{\frac{\partial }{\partial x_1}\texttt{(I)}\bigg\}\cdot \texttt{(II)}
%		&\quad 
		= 	\sum \Big(\poly^{n+1,n+1}(y) \exp(g)\Big)\cdot \Big(\prod^{2^{k+n+1}-1}\int_U \poly^{n+1,n+1}(y)\exp(g)du\Big).
	\end{aligned}
	\ee
	By using \eqref{eq.notation.polynomial} in a similar fashion, we get 
	\be\label{eq.denom.inductx}
	\begin{aligned}
		\bigg\{\frac{\partial }{\partial x_1}\texttt{(II)}\bigg\}\cdot \texttt{(I)} 
%		&\quad 
= &\bigg\{2^{k+n}\bigg(\int_U \exp(g)du\bigg)^{2^{k+n}-1}\int_U (D_{x_1}g) \exp(g)du\bigg\} \cdot \texttt{(I)}\\
%		&\quad 
		=&\sum \Big(\poly^{n+1,n+1}(y) \exp(g)\Big)\cdot \Big(\prod^{2^{k+n+1}-1}\int_U \poly^{n+1,n+1}(y)\exp(g)du\Big).
	\end{aligned}
	\ee
	Plugging \eqref{eq.numer.inductx} and \eqref{eq.denom.inductx} into \eqref{eq.qoutient.derivtermsx} yields \eqref{eq.lips.pif}  with $m=n+1$ and $k\in\N_0$. We then conclude that \eqref{eq.lips.pif} holds for any $m,k\in\N_0$. 
	
	{\bf Step 2.} We will provide a generic formula for the derivatives of $(x,y)\mapsto f(x,y,u)$ in \eqref{f in step 2}. 
	Given $k,m\in \N_0$ with $k+m\geq 1$, for any multi-indices $a,c$ with $|a|_{l_1}=k$ and $|c|_{l_1}=m$, we claim that
	\be\label{eq.lips.pif'}  
	D^a_y(D^c_x f)=\poly^{1,m}(y)+\dfrac{\sum ({\prod^{2^{k+m-1}}} \int_U \exp(g)\poly^{m,m}(y)du)}{(\int_U\exp(g)du)^{2^{k+m-1}}}.
	\ee
	
	First, let us prove \eqref{eq.lips.pif'} for $k=0$, i.e., for any multi-index $c=(c_1,\ldots,c_d)$ with $|c|_{l_1} =m\in\N$, 
	\be\label{step 2 induction}
	D^c_x f=\poly^{1,m}(y)+\dfrac{\sum ({\prod^{2^{m-1}}} \int_U \exp(g)\poly^{m,m}(y)du)}{(\int_U\exp(g)du)^{2^{m-1}}},
	\ee
	If $m=1$, as $\Hc = \lambda \ln\Gamma= \lambda(g-\ln(\int_U\exp(g)du))$ by \eqref{eq.h} and \eqref{g},  we have
	\be\label{m=1} 
	\begin{aligned}
		D_{x_j} f=& D_{x_j} r+\lambda D_{x_j} g-\lambda\frac{\int_U (D_{x_j}g) \exp(g)du}{\int_U \exp(g) du}
		=\poly^{1,1}(y)+\frac{\int_U \poly^{1,1}(y) \exp(g)du}{\int_U \exp(g) du}\quad \forall 1\leq j\leq d. 
	\end{aligned}
	\ee 
	That is, \eqref{step 2 induction} holds for $m=1$. Suppose that \eqref{step 2 induction} holds for $m=n\in\N$ and we aim to prove \eqref{step 2 induction} for $m=n+1$. For any multi-index $c=(c_1,\ldots,c_d)$ with $|c|_{l_1}=n+1$,  assume $c_1\geq 1$ w.l.o.g.\ and set $c':=(c_1-1,\ldots,c_d)$. As $|c'|_{l_1}=n$, $D^{c'}_x f$ fulfills \eqref{step 2 induction} (by the induction hypothesis), so that
	\be\label{eq.qoutient.derivtermsxf} 
	D_{x_1}(D^{c'}_xf)=D_{x_1}(\poly^{1,n}(y))+ \dfrac{(\frac{\partial }{\partial x_1}\texttt{(I)})\cdot \texttt{(II)}- (\frac{\partial }{\partial x_1}\texttt{(II)})\cdot \texttt{(I)}}{\texttt{(II)}^2},
	\ee
	with $\texttt{(I)}:=\sum (\prod^{2^{n-1}} \int_U \exp(g)\poly^{n,n}(y)du)$ and $\texttt{(II)}:=(\int_U\exp(g)du)^{2^{n-1}}$. Using \eqref{eq.notation.polynomial} in the same way as in the derivation of \eqref{eq.numer.inductx} and \eqref{eq.denom.inductx}, we see that both $(\frac{\partial }{\partial x_1}\texttt{(I)})\cdot \texttt{(II)}$ and $(\frac{\partial }{\partial x_1}\texttt{(II)})\cdot \texttt{(I)}$ can be expressed as $\sum \big(\prod^{2^{n}} \int_U \exp(g)\poly^{n+1,n+1}(y)du \big).$
	This, together with \eqref{eq.qoutient.derivtermsxf}, shows that \eqref{step 2 induction} holds for $m=n+1$. We then conclude that \eqref{step 2 induction} holds for any $m\in\N$.
	
	We proceed to prove \eqref{eq.lips.pif'} for $k\in\N$. If $k=1$ and $m=0$, by a calculation similar to \eqref{m=1}, %for all $1\leq i\leq d$ we have
	$$
	\begin{aligned}
		D_{y_i} f  &=\lambda D_{y_i} g-\lambda\frac{\int_U (D_{y_i}g) \exp(g)du}{\int_U \exp(g) du}=\poly^{0,0}(y)+\frac{\int_U\poly^{0,0}(y) \exp(g)du}{\int_U \exp(g) du}\quad \forall 1\leq i\leq d,
	\end{aligned}
	$$
	i.e., \eqref{eq.lips.pif'} holds in this case. %for $k=1$ and $m=0$. 
	Based on this and \eqref{step 2 induction}, %we perform induction as follows. 
	we suppose that \eqref{eq.lips.pif'} holds for $k=n\in\N_0$ and $m\in \N_0$, with $n+m\ge 1$, and aim to prove \eqref{eq.lips.pif'} for $k=n+1$ and $m\in \N_0$. For any multi-indices $a=(a_1,\ldots,a_d)$ and $c=(c_1,\ldots,c_d)$ with $|a|_{l_1}=n+1$ and  $|c|_{l_1}=m$, assume $a_1\geq 1$ w.l.o.g.\ and set $a':=(a_1-1,\ldots,a_d)$. As $|a'|_{l_1}=n$, $D^{a'}_yD^{c}_xf$ fulfills \eqref{eq.lips.pif'} (by the induction hypothesis), so that
	\be\label{eq.qoutient.derivtermsyf} 
	D_{y_1}(D^{a'}_yD^{c}_xf)=D_{y_1}(\poly^{1,m}(y))+ \frac{(\frac{\partial }{\partial y_1}\texttt{(I)})\cdot \texttt{(II)}- (\frac{\partial }{\partial y_1}\texttt{(II)})\cdot \texttt{(I)}}{\texttt{(II)}^2},
	\ee
	with $\texttt{(I)} :=\sum (\prod^{2^{n+m-1}} \int_U \exp(g)\poly^{m,m}(y)du)$ and $\texttt{(II)}:=(\int_U\exp(g)du)^{2^{n+m-1}}$. Using \eqref{eq.notation.polynomial} in the same way as in \eqref{eq.numer.inductx}-\eqref{eq.denom.inductx}, we see that both $(\frac{\partial }{\partial y_1}\texttt{(I)})\cdot \texttt{(II)}$ and $(\frac{\partial }{\partial y_1}\texttt{(II)})\cdot \texttt{(I)}$ can be expressed as 
	$\sum \big(\prod^{2^{m+n}} \int_U \exp(g)\poly^{m+1,m+1}(y)du\big).$
	This, together with \eqref{eq.qoutient.derivtermsyf}, shows that \eqref{eq.lips.pif'} holds for $k=n+1$ and $m\in\N_0$. We then conclude that \eqref{eq.lips.pif'} holds for any $k,m\in\N_0$ with $k+m\ge 1$.
	
	{\bf Step 3.} We will provide an estimate for the derivatives of $\hat f(x,y):=\int_U f(x,y,u)\Gamma(x,y,u) du$, with $f$ given by \eqref{f in step 2}. For two multi-indices $b^{(j)}=(b^{(j)}_1,\ldots,b^{(j)}_d)$, $j=1,2$, we will write $b^{(1)}\leq b^{(2)}$ if $b^{(1)}_i\leq b^{(2)}_i$ for all $i=1,2,\ldots,d$. In the sequel, $C_{m}>0$  (resp.\ $C_{k,m}>0$) is a generic constant that depends on only $m$,  $\Lambda_m$,  $\lambda$, $d$ and $\Leb(U)$ (resp.\ $k$, $m$, $\Lambda_m$, $\lambda$, $d$, and $\Leb(U)$) and may vary from line to line. Given $k,m\in \N_0$, for any multi-indices $a,c$ with $|a|_{l_1}=k$ and $|c|_{l_1}=m$, we claim that
	\be\label{eq.f.y}  
	\begin{aligned}
		\left|D^a_y D^c_x \hat f \right| = \bigg|D^a_y D^c_x \int_U f\Gamma du\bigg|\leq C_{k+m} (1+|y|)^{(m+1)2^{k+m}},
	\end{aligned}
	\ee
	For the case $k=m=0$, a calculation similar to \eqref{estimate in step 3} shows 
	\begin{equation}\label{a=c=0}
		|f| \le 3\Lambda_0(1+|y|)+ \lambda |\ln(\Leb(U))|,
	\end{equation}
	which implies $|\int_U f\Gamma du|\leq C_{0,0} (1+|y|)$. That is, \eqref{eq.f.y} holds for $k=m=0$. 
	
	In \eqref{eq.lips.pif} and \eqref{eq.lips.pif'}, every $\poly^{m,m}(y)$ is a polynomial in $y$ of degree $m$ or smaller, with coefficients being products of derivatives of $x\mapsto b(x,u), r(x,u)$ of order $m$ or smaller; moreover, the number of terms contained in each product depends on how many times differentiation has been carried out, i.e., $m$. Hence, in \eqref{eq.lips.pif} and \eqref{eq.lips.pif'}, we have $|\poly^{m,m}(y)|\leq C_m (1+|y|^m)$. 
	Now, for any $k,m\in \N_0$ and multi-indices $a,c$ with $|a|_{l_1}=k$ and $|c|_{l_1}=m$, applying the above estimate to \eqref{eq.lips.pif} yields
	\be\label{eq.f.0}  
	\begin{aligned}
		|D^c_x (D^a_y   \Gamma)| &\leq  \dfrac{ \sum \big(C_{m}(1+|y|^m)\big)^{2^{k+m}}\exp(g) \big(\int_U \exp(g)du\big)^{2^{k+m}-1}}{\big(\int_U \exp(g)du\big)^{2^{k+m}}}\\
		&=   \sum C_{k,m}(1+|y|)^{m2^{k+m}}\Gamma
		= C_{k,m}(1+|y|)^{m2^{k+m}}\Gamma.
	\end{aligned}
	\ee	
	Similarly, when $k+m\ge 1$, applying the same estimate to \eqref{eq.lips.pif'} gives 
	\begin{align*}
		|D^a_y(D^c_x f)|
		&\leq  |\poly^{1,m}(y)|+C_{k,m}(1+|y|)^{m2^{k+m-1}}\frac{\sum (\int_U \exp(g)du)^{2^{k+m-1}}}{(\int_U\exp(g)du)^{2^{k+m-1}}}
		\leq  C_{k,m}(1+|y|)^{(m+1) 2^{k+m-1}}. 
	\end{align*}
	Combining this and \eqref{a=c=0}, which covers the case $k+m=0$, leads to 
	\be\label{eq.f.1}
	|D^a_y(D^c_x f)|\le C_{k,m}(1+|y|)^{(m+1) 2^{k+m}}.
	\ee	
	
	Finally, given $k,m\in \N_0$ with $k+m\geq 1$, for any multi-indices $a,c$ with $|a|_{l_1}=k$ and $|c|_{l_1}=m$,	
	\be 
	\begin{aligned}
		&%\hspace{-0.15in} 
		\bigg|D^a_y D^c_x \int_U f\Gamma du\bigg|=\bigg|\int_U D^a_y D^c_x(f\Gamma) du \bigg|
		 \leq \sum_{0\leqslant p\leqslant a,0\leqslant q \leqslant c} \int_U |(D^{p}_yD^q_x f) (D^{a-p}_y D^{c-q}_x \Gamma)|du\\
		&\leq  \sum_{0\leqslant p\leqslant a,0\leqslant q \leqslant c}C_{|p|_{l_1},|q|_{l_1}}  C_{|a-p|_{l_1},|c-q|_{l_1}} %\int_U 
%		&\hspace{1in}  
\cdot (1+|y|)^{(|q|_{l_1}+1)2^{(|p|_{l_1}+|q|_{l_1})}+(m-|q|_{l_1})2^{(k-|p|_{l_1}+m-|q|_{l_1})}}\\
		&\le C_{k,m} (1+|y|)^{(m+1)2^{k+m}}
		\leq  C_{k+m,k+m} (1+|y|)^{(m+1)2^{k+m}}
		= C_{k+m} (1+|y|)^{(m+1)2^{k+m}},
	\end{aligned}
	\ee
	where the second inequality follows from \eqref{eq.f.0}, \eqref{eq.f.1}, and $\int_U \Gamma du=1$. That is, \eqref{eq.f.y} holds for $k,m\in\N_0$ with $k+m\ge 1$. 
	
	{\bf Step 4.} We now prove \eqref{eq.lips.xy}.  
	Fix $k\in\N_0$ and $p=(p_i)_{i=1}^d:\R^d\to \R^d$ in $\Cc^{k,\alpha}(\R^d)$. For $k=0$, we recover \eqref{eq.lips.xy} from \eqref{eq.f.y} by taking $k=m=0$ and $y=p(x)$ therein. For $k\in\N$, we deduce from \eqref{eq.f.y} that for any multi-index $a=(a_1,\ldots,a_d)$ with $|a|_{l_1}=k$, 
	\be\label{eq.f.z0}
	\big|D^a_z\hat{f}(z)\big|\leq C_{k} (1+|y|)^{(k+1)2^{k}},\quad \hbox{with $z=(x,y)\in\R^{d}\times\R^d$}. 
	\ee
	For any $s\in\N$, we say that the multi-index $a$ is decomposed into distinct $q_1, q_2,\ldots,q_s\in\N_0^d$ with multiplicities $m_1,m_2,\ldots,m_s\in\N_0^{2d}$ if $a = \sum_{j=1}^s |m_j|_{l_1}q_j$. Note that $q_j$ (resp.\ $m_j$) is a multi-index of dimension $d$ (resp.\ $2d$) for all $1\le j\le s$. We denote by $\mathcal{D}$ the set of all $(s,q=(q_j)_{j=1}^s, m=(m_j)_{j=1}^s)$ that form a decomposition of $a$. Now, for all $x=(x_1,\ldots,x_d)\in\R^d$ define $\fz:\R^d\to\R^{2d}$ by 
	$$\fz(x) = (\fz_1(x),\ldots,\fz_d(x), \fz_{d+1}(x),\ldots, \fz_{2d}(x)):= (x_1,\ldots,x_d,p_1(x),\ldots,p_d(x)).$$
	Then, by the multivariate chain rule (i.e., the Fa\`a di Bruno formula; see \cite[Section 6]{ma2009higher}), 
	\be\label{eq.chain.multi}  
	D^a_x \hat{f}(x,p(x))=a! \sum_{(s,q,m)\in \mathcal{D}} (D^{\bar m}_z \hat{f})(\fz(x)) \prod_{j=1}^s \prod_{i=1}^{2d}\frac{1}{m_{ji}!}\Big[\frac{1}{q_{j1}!\cdots q_{jd}!}D^{q_j}_x \fz_i(x) \Big]^{m_{ji}}, 
	\ee
	with $a! := \prod_{j=1}^d a_j!$ and $\bar m := \sum_{j=1}^s m_j$. By the definition of $\fz_i$, 
	for any multi-index $c=(c_1,\ldots,c_d)$, 
	\be\label{Dfz}
	D^c_x \fz_i(x)=1\ \hbox{or}\ 0\quad \hbox{for $1\leq i\leq d$};\qquad D^c_x \fz_i(x)=D^c_x p_{i-d}(x)\quad \hbox{for $d+1\le i\leq 2d$}. 
	\ee
	It follows that
	\be\label{eq.deriv.z}   
	\begin{aligned}
		\bigg|\prod_{j=1}^s \prod_{i=1}^{2d}\Big[D^{q_j}_x \fz_i \Big]^{m_{ji}}\bigg| &\leq  \max\bigg\{1, \bigg|\prod_{j=1}^s \prod_{i=d+1}^{2d}\Big[D^{q_j}_x \fz_i \Big]^{m_{ji}}\bigg| \bigg\}
		\leq 1+ \prod_{j=1}^s \prod_{i=d+1}^{2d}\bigg( \sum_{|c|_{l_1}=0,\ldots,k} |D^c_x p|\bigg)^{m_{ji}}\\
		&= 1+\bigg(\sum_{|c|_{l_1}=0,\ldots,k} |D^c_x p|\bigg)^{\sum_{j=1}^s |m_j|_{l_1}}
		\leq   1+\bigg(\sum_{|c|_{l_1}=0,\ldots,k} |D^c_x p|\bigg)^k,
	\end{aligned}
	\ee 
	where the second and last inequalities stem from $(\max_{1\leq j\leq s}|q_j|_{l_1})\vee(\sum_{j=1}^s |m_j|_{l_1}) \leq |a|_{l_1}= k$, a consequence of $(s,q,m)\in\mathcal D$. Also, collecting all the constant coefficients in \eqref{eq.chain.multi} results in 
	$a! \sum_{(s,q,m)\in \mathcal{D}}\ \prod_{j=1}^s \prod_{i=1}^{2d}\frac{1}{m_{j,i}!}(\frac{1}{q_{j,1}!\cdots q_{j,d}!} )^{m_{j,i}}$, which is a finite constant depending on only $k$ and $d$. Hence, by applying the estimates \eqref{eq.f.z0} and \eqref{eq.deriv.z} to \eqref{eq.chain.multi}, we get the desired result \eqref{eq.lips.xy}. 
\end{proof}

Now we move to the proof for part (ii) of Lemma \ref{lemma.lips.xy}.

\begin{proof}[Proof of Lemma~\ref{lemma.lips.xy} (ii)]
We will again prove the result only for $(\hat r-\hat\Hc)(\cdot,p(\cdot))$, as the case of $\hat b(\cdot,p(\cdot))$ is similar and simpler. 
	Fix $k\in \N_0$ and $0<\alpha\leq 1$. Consider $f$ as in \eqref{f in step 2}. To prove the desired result $\hat f(\cdot,p(\cdot))\in \Cc^{k,\alpha}_{\text{unif}}(\R^d)$, it suffices to show $D^a_x\hat{f}(\cdot,p(\cdot))\in \Cc^{0,\alpha}_{\text{unif}}(\R^d)$ for all multi-indices $a=(a_1,\ldots,a_d)$ with $|a|_{l_1}\le  k$. 
	First, it can be checked directly that the following implication holds:
	\be\label{h1h2}
	h_1, h_2\in \Cc^{0,\alpha}_{\text{unif}}(\R^d)\implies h_1+h_2,\ h_1h_2\in \Cc^{0,\alpha}_{\text{unif}}(\R^{d}).
	\ee
	By \eqref{eq.chain.multi}, the above implication indicates that ``$D^a_x\hat{f}(\cdot,p(\cdot))\in \Cc^{0,\alpha}_{\text{unif}}(\R^d)$'' holds if $(D^{\bar m}_z \hat{f})(\fz(\cdot))$ and $D^{q_j}_x \fz_i$ in \eqref{eq.chain.multi} belong to $\Cc^{0,\alpha}_{\text{unif}}(\R^d)$. By \eqref{Dfz} and %the assumption 
	$p\in\Cc^{k,\alpha}_{\text{unif}}(\R^{d})$, we know $D^{q_j}_x \fz_i\in\Cc^{0,\alpha}_{\text{unif}}(\R^d)$. Hence, it remains to show $(D^{\bar m}_z \hat{f})(\fz(\cdot))\in \Cc^{0,\alpha}_{\text{unif}}(\R^d)$, or equivalently, $(D^{a}_z \hat{f})(\cdot, p(\cdot))\in \Cc^{0,\alpha}_{\text{unif}}(\R^d)$ for any multi-index $a=(a_1,\ldots,a_{2d})$ with $|a|_{l_1}\le k$. It follows from  \eqref{eq.hat.notation} that 
%	Recall from \eqref{eq.hat.notation} that $\hat f(z) = \int_U f(z,u)\Gamma(z,u) du$ for $z=(x,y)\in\R^d\times\R^d$. It follows that, 
for all $a\in\N_0^{2d}$ with $|a|_{l_1}\le k$,
	\[
	\big\|(D^a_z \hat f)(\cdot,p(\cdot))\big\|_{\Cc^{0,\alpha}(B_1(x))} \le \int_U \big\|(D^a_z (f\Gamma))(\cdot,p(\cdot),u)\big\|_{\Cc^{0,\alpha}(B_1(x))} du. 
	\]
	As $\Leb(U)<\infty$, for ``$(D^{a}_z \hat{f})(\cdot, p(\cdot))\in \Cc^{0,\alpha}_{\text{unif}}(\R^d)$ for $a\in\N_0^{2d}$ with $|a|_{l_1}\le k$'' to hold, it suffices to show
``$
	\sup_{x\in\R^d, u\in U}\big\|D^a_z (f\Gamma)(\cdot,p(\cdot),u)\big\|_{\Cc^{0,\alpha}(B_1(x))}<\infty\ \hbox{for}\ a\in\N_0^{2d}\ \hbox{with}\ |a|_{l_1}\le k,
$''
	which is equivalent to: for any $a_1,c_1,a_2,c_2\in \N_0^d$ such that $0\le |a_1|_{l_1} + |c_1|_{l_1} + |a_2|_{l_1} + |c_2|_{l_1}\le k$, 
	\be\label{step 5 to show}
	\sup_{x\in\R^d, u\in U}\big\| \big(D^{a_1}_x (D^{c_1}_y f) D^{a_2}_x (D^{c_2}_y \Gamma)\big) (\cdot,p(\cdot),u)\big\|_{\Cc^{0,\alpha}(B_1(x))}<\infty. 
	\ee
	To establish this, we first observe similarly to \eqref{h1h2} that, for any $h_1, h_2:\R^d\times U\rightarrow \R$, 
	\be\label{eq.alpha.unif}
	h_1, h_2\in \Cc^{0,\alpha}_{\text{unif},U}(\R^d)\implies h_1+h_2,\ h_1h_2\in \Cc^{0,\alpha}_{\text{unif},U}(\R^{d}),
	\ee
	where we denote by $\Cc^{0,\alpha}_{\text{unif},U}(\R^d)$ the set of $h:\R^d\times U\to \R$ with $\sup_{x\in\R^d, u\in U}\|h(\cdot,u)\|_{\Cc^{0,\alpha}(B_1(x))}<\infty$. Thanks to \eqref{eq.alpha.unif}, we deduce from \eqref{eq.lips.pif} and \eqref{eq.lips.pif'} that to establish \eqref{step 5 to show},  it suffices to show $\poly^{m,m}, \exp(g), 1/\exp(g)\in \Cc^{0,\alpha}_{\text{unif},U}(\R^d)$ in \eqref{eq.lips.pif} and \eqref{eq.lips.pif'}, with $y=p(x)$ and $0\le m\le k$ therein. 
	
	With $y=p(x)$ and $0\le m\le k$, note that we can express every $\poly^{m,m}(y)$ in \eqref{eq.lips.pif} and \eqref{eq.lips.pif'} more specifically as a function on $\R^d\times U$, i.e., 
	\be\label{eq.poly.fullform} 
	\poly^{m,m}(x,u) = \poly^{m,m}(x,p(x),u)=\sum_{i} c_i(x,u)  (p_1(x))^{a_{i1}}\cdots (p_d(x))^{a_{id}} ,
	\ee
	where each $c_i(x,u)$ is a product of the derivatives of $x\mapsto b(x,u), r(x,u)$ of order $m$ or smaller, and each $a_i=(a_{i1},\ldots, a_{id})$ is a multi-index with $|a_i|_{l_1}\leq m$. %By $\Lambda_{k+1}<\infty$, we 
Observe from \eqref{eq.parameters.m}  that $\sup_{u\in U}\{\|b(\cdot, u)\|_{\Cc^{k,\alpha}(B_1(x))}+\|r(\cdot, u)\|_{\Cc^{k,\alpha}(B_1(x))}\}\le 2^{1-\alpha}\Lambda_{k+1}<\infty$ for all $x\in\R^d$, which implies $b,r\in \Cc^{k,\alpha}_{\text{unif},U}(\R^d)$. By \eqref{eq.alpha.unif}, we get $c_i\in \Cc^{0,\alpha}_{\text{unif},U}(\R^d)$. As $p\in \Cc^{k,\alpha}_{\text{unif}}(\R^d)$, each $p_i$, when viewed as a function on $\R^d\times U$ (constant on $U$), readily lies in $\Cc^{0,\alpha}_{\text{unif},U}(\R^d)$. Hence, in view of \eqref{eq.poly.fullform}, we deduce from \eqref{eq.alpha.unif} again that $\poly^{m,m}\in \Cc^{0,\alpha}_{\text{unif},U}(\R^d)$. 
	
	Set $\bar g(x,u) := g(x,p(x),u)$. It remains to show $\exp(\bar g), 1/\exp(\bar g)\in \Cc^{0,\alpha}_{\text{unif},U}(\R^d)$. As $b,r, p\in \Cc^{0,\alpha}_{\text{unif},U}(\R^d)$, by the definition of $g$ in \eqref{g} and using \eqref{eq.alpha.unif}, we have $\bar g \in \Cc^{0,\alpha}_{\text{unif},U}(\R^d)$. This readily gives the boundedness of $\exp(\bar g)$ and $1/\exp(\bar g)$. Set $M:= \|\exp(\bar g)\|_{L^\infty(\R^d\times U)}<\infty$. By writing $\bar g_i = \bar g(x_i,u)$, $i=1,2$, for any distinct $x_1,x_2\in\R^d$ and $u\in U$, it holds for any $x\in\R^d$ that  
	\begin{align*}
		&\hspace{-0.35in}\sup_{x_1,x_2\in B_1(x)} \frac{|\exp(\bar g_1)-\exp(\bar g_2)|}{|x_1-x_2|^\alpha} = \sup_{x_1,x_2\in B_1(x)} \frac{|\exp(\bar g_1)-\exp(\bar g_2)|}{|\bar g_1-\bar g_2|} \frac{|\bar g_1- \bar g_2|}{|x_1-x_2|^\alpha}\\
		&\le \bigg(\sup_{x\in B_1(x)} \exp(\bar g(x,u))\bigg) \|\bar g(\cdot,u)\|_{\Cc^{0,\alpha}(B_1(x))}\le M \|\bar g(\cdot,u)\|_{\Cc^{0,\alpha}(B_1(x))}.
	\end{align*}
	As $\bar g \in \Cc^{0,\alpha}_{\text{unif},U}(\R^d)$, taking supremum over $x\in\R^d$ and $u\in U$ yields a finite upper bound, which implies $\exp(\bar g) \in \Cc^{0,\alpha}_{\text{unif},U}(\R^d)$. Similarly, it holds for any $x\in\R^d$ and $u\in U$ that
	\begin{align*}
		\sup_{x_1,x_2\in B_1(x)}\frac{|1/\exp(\bar g_1)-1/\exp(\bar g_2)|}{|x_1-x_2|^\alpha} 
		&= \sup_{x_1,x_2\in B_1(x)} \frac{1}{\exp(\bar g_1)\exp(\bar g_2)}\frac{|\exp(\bar g_1)-\exp(\bar g_2)|}{|\bar g_1-\bar g_2|} \frac{|\bar g_1-\bar g_2|}{|x_1-x_2|^\alpha}\\
		&\le N^2 M \|\bar g(\cdot,u)\|_{\Cc^{0,\alpha}(B_1(x))},
	\end{align*}
with $N:= \|1/\exp(\bar g)\|_{L^\infty(\R^d\times U)}<\infty$. Taking supremum over $x\in\R^d$ and $u\in U$ yields a finite upper bound, thanks again to $\bar g \in \Cc^{0,\alpha}_{\text{unif},U}(\R^d)$. This implies $1/\exp(\bar g) \in \Cc^{0,\alpha}_{\text{unif},U}(\R^d)$. 
\end{proof}
\end{appendix}

\small{
\bibliographystyle{plain}
\bibliography{reference}
}
\end{document}